\newtheorem{thmA}{Theorem}
\newtheorem{propA}{Proposition}
\newtheorem{theorem}{Theorem}[section]
\newtheorem{lemma}[theorem]{Lemma}
\newtheorem{cor}[theorem]{Corollary}
\newtheorem{corollary}[theorem]{Corollary}
\newtheorem{prop}[theorem]{Proposition}
\newtheorem{proposition}[theorem]{Proposition}
\theoremstyle{definition}
\newtheorem{definition}[theorem]{Definition}
\theoremstyle{remark}
\newtheorem{remark}[theorem]{Remark}
\numberwithin{equation}{section}
\def\cat{{\rm{CAT}}$(0)$ }
\def\n{\hbox{\bf{n}}}
\def\C{\mathcal C}
\def\NC{\mathcal N(\mathcal C)}
\def\NCp{\mathcal N(\mathcal C')}
\def\NT{\mathcal N(\mathcal T)}
\def\T{\mathcal T}
\def\ssm{\smallsetminus}
\def\e{\varepsilon}
\def\im{\hbox{im }}
\def\R{\mathbb R}
\def\E{\mathbb E}
\def\g{\gamma}
\def\s{\sigma}
\def\e{\varepsilon}
\def\sym{{\rm{sym}}}
\def\out{{\rm{Out}}(F_n)}
\def\fix{\hbox{\rm{Fix}}}
\def\l{\lambda}
\def\r{\rho}
\def\<{\langle}
\def\>{\rangle}
\def\-{\overline}
\def\N{\mathbb N}
\def\Ne{\mathcal N}
\def\Ni{{\rm{Niel}}}
\def\Z{\mathbb Z}
\def\S{\mathbb S}
\def\G{\Gamma}
\def\fd{\hbox{\rm{FixDim}}}
\def\sln{\hbox{\rm{SL}}(n,\mathbb Z)}
\def\slnz{$\hbox{\rm{SL}}(n,\mathbb Z)$}
\def\autn{$\hbox{\rm{Aut}}(F_n)$ }
\def\aut{\text{\rm{Aut}} (F_n)} 
\def\outn{\text{\rm{Out}} (F_n)} 
\def\FR{\text{\rm{F}}\mathbb R}
\def\isom{\text{\rm{Isom}}}
\def\sautn{\text{\rm{SAut}}(F_n)}
\def\saut{\text{\rm{SAut}}(F_n)}
\def\modg{\text{\rm{Mod}}_g}
\def\A{\mathcal A}
\def\serieslogo@{\relax}
\def\@setcopyright{\relax}
\begin{document}

\title[Helly-type theorems, CAT$(0)$ spaces, and actions of 
Aut$(F_n)$]
{Helly-type theorems, CAT$(0)$ spaces, and actions of 
automorphism groups of free groups} 
\author[Martin R. Bridson ]{Martin R.~Bridson}
\address{Mathematical Institute, AWB Woodstock Road, Oxford OX2 6GG, Europe}
\email{bridson@maths.ox.ac.uk} 


\subjclass{20F65, 20F67, 57M60, 20F28}

\keywords{Fixed-point theorems,  CAT$(0)$ spaces,
automorphism groups of free groups, Helly-type theorems}

\thanks{ 
For the purpose of open access, the author has applied a CC BY public copyright licence to any author accepted manuscript arising from this submission.} 

\begin{abstract}  
We prove a variety of fixed-point theorems for
groups acting on CAT$(0)$ spaces. Fixed points are obtained by a bootstrapping
technique, whereby increasingly large subgroups are proved to have fixed points:
specific configurations in the subgroup lattice of $\G$ are exhibited and
Helly-type theorems are developed to prove that the fixed-point sets of the subgroups in 
the configuration intersect. In this way, we obtain
lower bounds on the smallest dimension $\fd(\G)+1$ in which various
groups of geometric interest  can act on a complete CAT$(0)$ space without a global
fixed point.
For automorphism groups of free groups, we prove $\fd(\aut)\ge \lfloor 2n/3\rfloor$.  
\end{abstract}

\maketitle
  
In this article we shall prove fixed-point theorems for groups
acting on \cat spaces by analyzing the pattern of fixed-point sets of
subgroups.  
 The basic question that we address is this:
given a group $\G$, what is the least integer $d=\fd(\G)+1$ such
that $\G$ admits a fixed-point-free
 action by isometries on a complete \cat space of dimension $d$? 
 
 We shall present a number
 of general results and methods for establishing bounds on $d$ and then apply them to groups of
 geometric interest. 
 We shall pay particular attention to the automorphism groups  of free
 groups, ${\rm{Aut}}(F_n)$,  but many other groups of geometric interest
 will  enter the discussion, such as higher-rank lattices, mapping class groups,
 and braid groups. Besides their intrinsic interest, these examples are pursued in
 detail in order to illustrate the practical nature of the general methods
 that we develop, particularly
 the {\em{Ample Duplication Criterion}}, whose technical statement we defer for the moment. 
 
The proof of the following theorem is the most involved in
this paper. It requires a subtle and iterated use of the
various fixed-point criteria that we develop, as well as a detailed understanding of generating sets for $\aut$.

\begin{restatable}{thmA}{firstthmA}
\label{i:main}%
If $n\ge 3m$ and $d<2m$, or $n\ge 3m+2$ and $d<2m+1$,
then $\aut$ has a fixed point whenever it acts by isometries on a complete 
${\rm{CAT}}(0)$ space of dimension $d$. 
\end{restatable}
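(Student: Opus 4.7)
The plan is to argue by strong induction on $n$, proving both implications simultaneously. Suppose $\aut$ acts by isometries on a complete CAT$(0)$ space $X$ of dimension $d<2m$ with $n\ge 3m$ (the case $d<2m+1$, $n\ge 3m+2$ being analogous). The natural subgroups to exploit are the \emph{parabolic copies} $H_S\cong\text{Aut}(F_{|S|})$ associated to subsets $S\subset\{x_1,\dots,x_n\}$: here $H_S$ acts in the obvious way on the free factor generated by $S$ and fixes the complementary basis elements. The inductive hypothesis applied to $H_S$ whenever $|S|\ge 3m$ produces a non-empty, closed, convex fixed set $F_S=\fix(H_S)\subset X$.

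The crux is to show that a well-chosen family of such fixed sets has a common point and that the corresponding subgroups generate $\aut$. Because $n\ge 3m$, every pair $\{x_i,x_j\}$ lies in some $3m$-subset $S_\alpha$, and so every Nielsen move $L_{ij}, R_{ij}$ and every inversion $\epsilon_i$ lies in some $H_{S_\alpha}$. Selecting $\{S_\alpha\}$ to cover all such pairs (and singletons) yields a collection whose union generates $\aut$. A point in $\bigcap_\alpha F_{S_\alpha}$ is therefore a global fixed point, so it suffices to arrange that this intersection is non-empty.

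To force the intersection, I would invoke the \emph{Ample Duplication Criterion} developed earlier in the paper. One verifies that within $\aut$ there are enough conjugating elements realizing the $H_{S_\alpha}$ as duplicates of one another --- via the natural symmetries that permute and invert generators, and via the block-preserving Nielsen moves relating overlapping $S_\alpha$'s --- so that the configuration $\{F_{S_\alpha}\}$ satisfies the Helly-type hypothesis of the criterion, with the dimension bound $d<2m$ supplying the room needed for convex sets in these counts to intersect.

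The principal obstacle is assembling the configuration so that the Ample Duplication hypotheses are actually met. The criterion is sensitive to the precise combinatorial pattern of overlaps and conjugations, so one cannot simply throw in every $3m$-subset; a careful selection is required, and in all likelihood the argument must be iterated --- first building a common fixed set for an intermediate layer of parabolics $H_S$ with $|S|$ in a middle range, then using that as a platform to climb to $\aut$ itself. Further subtleties arise in the second case $n\ge 3m+2$, $d<2m+1$, where an extra unit of dimension must be absorbed by exploiting the two extra generators; and in meshing the non-$\sautn$ generators (inversions and basis permutations) with the Nielsen moves, which likely forces the parabolics to be enlarged to carry the $(\Z/2)^{|S|}\rtimes S_{|S|}$ extension and forces extra bookkeeping in the bootstrap.
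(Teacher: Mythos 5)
There is a genuine gap: your induction has neither a base case nor an engine. When $n=3m$ --- which is exactly where the difficulty of the theorem is concentrated --- the only subset $S$ of the basis with $|S|\ge 3m$ is the whole basis, so $H_S=\aut$ itself and the inductive hypothesis tells you nothing. Your scheme at best reduces the case $n>3m$ to the case $n=3m$, and even that reduction is circular: to intersect the sets $F_{S_\alpha}$ by the Metric Helly argument you would need every $(d+1)$-fold intersection to be non-empty, but the subgroup generated by $d+1$ of the parabolics $H_{S_\alpha}$ is typically all of $\aut$ again, which is what you are trying to prove has a fixed point. The Ample Duplication Criterion cannot rescue this, because it applies to a finite generating set $\A$ of a subgroup and requires every large subset of $\A$ to either split into mutually normalizing pieces or to have many commuting conjugates of its span; overlapping parabolic copies of ${\rm{Aut}}(F_{3m})$ do not commute and do not normalize one another, and you never verify (and could not verify) condition (1) for any concrete $\A$. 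Most fundamentally, nowhere in your argument do you produce the \emph{first} fixed point of an infinite subgroup.

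The paper gets that first fixed point from torsion, not from smaller copies of $\aut$. Lemma \ref{l:dihedral} exhibits $2m$ (resp.\ $2m+1$) commuting, mutually conjugate infinite dihedral subgroups, each generated by two involutions and each containing a Nielsen transformation; the Product Lemma with Torsion (Proposition \ref{p:prodLemma}) then forces one of them --- hence, by conjugacy, every Nielsen transformation --- to be elliptic whenever $d<2m$ (resp.\ $d<2m+1$). This is where the bound $2m$ comes from: it counts commuting dihedral factors, not ranks in an induction. The second stage, Theorem \ref{t:n-1}, converts ellipticity of a single Nielsen transformation into a global fixed point for $\sautn$ in dimension less than $n-1$; it is there that the Ample Duplication Criterion is genuinely used, applied not to parabolic automorphism groups but to the column subgroup $M_n(n-1)=\langle\lambda_{n1},\dots,\lambda_{n,n-1}\rangle$, whose $k$-element subsets of generators admit $2(n-k)$ commuting conjugates; a further normalization induction over the sets $\Ni_i$ and the $\Delta_n$ criterion then finish. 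Your instincts that the argument is iterated and that Ample Duplication enters are correct, but the configuration you propose cannot satisfy its hypotheses, and without the torsion input the bootstrap never starts.
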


We obtain the same bound for  $\saut$, the unique subgroup of index $2$ in $\aut$.
An important point to note is that
we do not assume that the \cat spaces we study are locally compact, nor do we assume that
the actions are by semisimple isometries.
As extra conditions are imposed on the space and the action, sharper results 
are obtained. For example, we shall prove
that $\saut$ cannot act non-trivially by semisimple isometries on any smooth, complete
\cat manifold of dimension less than $2n-4$ (Theorem \ref{t:hadamard}). 
For ${\rm{SL}}(n,\Z)$, if $n\ge 3$ then the group has a fixed point
whenever it acts by semisimple isometries on a complete \cat space of finite dimension (Proposition \ref{p:sln})
but for actions that are not semisimple we only know that $\fd(\sln)$ is at least $n-2$ when $n$
is odd and at least $n-3$ when $n$ is even (Proposition \ref{t:sln}).
 
\subsection*{General Criteria}

If a group $H$ acts by isometries on a complete \cat space $X$, then  
the points of $X$ fixed by $H$ form a closed convex subspace. The dimension\footnote{topological
covering dimension} of a \cat space 
places constraints on
the way in which families of closed convex subsets of  \cat spaces can intersect. The most classical
instance of this is Helly's Theorem \cite{helly}: if one has a finite collection of  
convex subsets in
$\R^d$, and each $(d+1)$-member sub-collection has a non-empty intersection, then the entire family has
non-empty intersection. There are many proofs and many generalisations of this theorem in the literature,
often couched in homological language, as in the Acyclic Covering Lemma
(see \cite{brown}  p.168, for example).  
For our purposes, the most useful generalisation is the following, 
which is a special case of the version whose proof is given in an appendix to this paper.
Recall that the {\em nerve} $\NC$ of a 
collection of subsets $\C$ is the simplicial complex whose $k$-simplices $[i_0,\dots,i_k]$ correspond to sub-collections
$\{C_{i_0},\dots, C_{i_k}\}\subset\C$ with non-empty intersection.

\begin{thmA} \label{t:helly} Let $X$ be a complete  convex metric space (for example
a complete \cat space) and let $\C$ be a finite collection of closed convex subsets of $X$.
If  $\dim(X)\le d$, then every continuous map $\NC\to\S^r$ from the nerve of $\C$ to a
sphere of dimension $r\ge d$ is homotopic to a constant map.
\end{thmA}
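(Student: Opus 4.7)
My strategy is to bound the cohomotopy of $\NC$ by the topological dimension of $X$. I construct an auxiliary map $\phi\colon \NC \to X$ reflecting the cover, identify the nerve up to homotopy with its image $Y \subseteq X$, and then invoke Borsuk's extension theorem together with contractibility of $X$. For each simplex $\sigma$ of $\NC$ the intersection $C_\sigma = \bigcap_{i \in \sigma} C_i$ is non-empty by definition of the nerve and convex by hypothesis, hence contractible in a complete convex metric space. I build $\phi$ by choosing $z_\sigma \in C_\sigma$ for every $\sigma$, passing to the barycentric subdivision of $\NC$, sending each new vertex $b_\sigma$ to $z_\sigma$, and extending across each subdivision simplex $[b_{\tau_0}, \dots, b_{\tau_m}]$ (with $\tau_0 \subsetneq \cdots \subsetneq \tau_m$) by iterated geodesic combinations in $X$. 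Since $C_{\tau_0} \supseteq \cdots \supseteq C_{\tau_m}$, every $z_{\tau_i}$ lies in the convex set $C_{\tau_0}$, so the image of this subdivision simplex lies in $C_{\tau_0}$. The image $Y = \phi(\NC)$ is a closed subset of $X$ of covering dimension at most $d$, and $\phi$ sends each subdivision simplex into a single contractible member of the cover.

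The central step is the nerve-lemma claim that $\phi\colon \NC \to Y$ is a weak homotopy equivalence. Because the cover $\{C_i \cap Y\}$ of $Y$ has all non-empty intersections contractible, the canonical map from its nerve to $Y$ is a weak equivalence. In this closed-cover metric setting I would reduce to the classical open-cover nerve theorem by thickening each $C_i$ slightly to an open convex neighbourhood $U_i$ with the same combinatorial nerve and the same pattern of contractible intersections. Then, given any $h\colon \NC \to \S^r$, the equivalence yields $h \simeq \bar h \circ \phi$ for some $\bar h \colon Y \to \S^r$, so it suffices to prove that $\bar h$ is null-homotopic.

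For this I apply Borsuk's characterization of covering dimension: since $Y$ is closed in $X$ and $\dim X \le d \le r$, every continuous map $Y \to \S^r$ extends to a continuous map $X \to \S^r$. Because every complete convex metric space is contractible (via radial deformation to any chosen base point), any such extension is null-homotopic, and therefore so are $\bar h$ and $h$.

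The main obstacle is making the nerve-lemma step rigorous for a closed convex cover in a possibly non-locally-compact complete convex metric space. The open-cover nerve theorem applies after thickening, but one must verify that the thickenings can be chosen to preserve both the combinatorial nerve and the contractibility of every intersection, and that the resulting nerve map agrees up to homotopy with $\phi$. A secondary subtlety is the borderline case $r = d$, which is handled cleanly by Borsuk's theorem but would require extra care under a purely obstruction-theoretic argument, since cohomological dimension alone does not force top-dimensional maps to $\S^d$ to be null-homotopic.
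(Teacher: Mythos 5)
Your overall architecture --- build $\phi\colon\NC\to X$ by barycentric subdivision and geodesic coning, push a given map $h\colon\NC\to\S^r$ through a homotopy factorisation over the image, extend over $X$ using the extension-theoretic definition of dimension, and kill it by contractibility of $X$ --- matches the paper's, and your construction of $\phi$ and your final two steps are sound (the image is compact, hence closed, so the extension property applies, and the $r=d$ case is indeed automatic with this definition of dimension). The genuine gap is the middle step, which you have correctly flagged yourself: you invoke a nerve theorem for the closed cover, to be obtained by thickening each $C_i$ to an open convex neighbourhood ``with the same combinatorial nerve.'' This fails in general: two disjoint closed convex sets in a complete convex metric space can be at distance zero (e.g.\ a half-plane and the epigraph of a strictly positive function decaying to $0$, in $\E^2$), so every positive thickening creates new simplices in the nerve; and since the spaces here may be non-locally-compact with wild local structure, there is no uniform lower bound to exploit. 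A second unjustified assertion is that the cover $\{C_i\cap Y\}$ of $Y=\phi(\NC)$ has contractible intersections: $Y$ is merely the image of a finite complex, and $C_i\cap Y$ has no reason to be convex or contractible.

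The repair --- and what the paper actually does --- is to notice that you never need $\phi$ to be a weak equivalence onto its image; you only need a one-sided homotopy inverse, i.e.\ a continuous $\psi\colon U\to\NC$ with $\psi\circ\phi\simeq\mathrm{id}_{\NC}$, where $U=\phi(\NC)$, since then $\bar h:=h\circ\psi$ satisfies $\bar h\circ\phi\simeq h$. The paper constructs $\psi$ by hand from the metric: for $x\in U$ set $J(x)=\{j : x\in C_j\}$ and $\e(x)=\min_{i\notin J(x)}d(x,C_i)>0$ (assuming WLOG that $\bigcap_i C_i=\emptyset$), so that $y\in B(x,\e(x))$ implies $J(y)\subseteq J(x)$; cover the compact set $U$ by finitely many such balls, map $U$ to the nerve of that ball cover via a partition of unity, and then map that nerve to $\NC$ by the poset morphism $\tau\mapsto\bigcap_{t\in\tau}J(x_t)$. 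The open-star argument then gives $\psi\circ\phi\simeq\mathrm{id}$. This sidesteps both the thickening problem (no thickening is needed; the balls are tailored pointwise) and the need for any general closed-cover nerve theorem. Your proof becomes correct once this explicit construction replaces the appeal to the nerve lemma.
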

  
One recovers the classical Helly Theorem by taking $X=\R^d$,  noting that if Helly's  Theorem failed then
a counterexample $\C$ of minimal cardinality $s\ge d+2$ would have $\NC = \partial\Delta_{s-1}\approx\S^{s-2}$ --
cf.~Corollary \ref{empty}. 
  
We apply Theorem \ref{t:helly} to the fixed-point sets of groups of isometries. 
Our strategy
is to prove fixed-point theorems for groups of geometric interest by induction, analyzing configurations of 
subgroups that can be more-easily proved to have fixed points.  A simple illustration of this is the following (Proposition \ref{p:Delta}). {\em{Let $\G$ be a group that is
 generated by  $A_1\cup\dots\cup A_m\subset\G$
 and let $X$ be a complete \cat space of dimension $d$ on which $\G$
 acts by isometries. If the subgroup generated by each
 $(d+1)$ of the sets $A_i$ has a fixed point in $X$, then
 $\G$ has a fixed point.}}

When applying such results, a useful starting point is the observation that finite groups of isometries of
complete  \cat spaces always have fixed points. This underpins a number of 
simply stated fixed-point results, such as:

\begin{propA}[Product Lemma with Torsion]\label{p:prodLemma}  If the
groups $\G_1,\dots,\G_d$ each have a finite generating set consisting of elements of 
finite order, then at least one of the $\G_i$ has a fixed point whenever 
$\G_1\times\dots\times\G_d$ acts by isometries on a complete \cat space of dimension
less than $d$. 
\end{propA}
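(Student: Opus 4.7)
The plan is to argue by contradiction: suppose $\fix(\G_i)=\emptyset$ for every $i$, and use Theorem \ref{t:helly} to derive an impossible sphere in the nerve of fixed-point sets.

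First I would pass, within each factor, to a minimal Helly-type subset. Since every torsion generator $s\in S_i$ has $\fix(s)\neq\emptyset$, one can choose a minimal $T_i\subseteq S_i$ with $\bigcap_{s\in T_i}\fix(s)=\emptyset$. Minimality makes the nerve of $\{\fix(s):s\in T_i\}$ equal to $\partial\Delta^{|T_i|-1}\cong\S^{|T_i|-2}$, and applying Theorem \ref{t:helly} to its identity map (using $\dim X<d$) forces $|T_i|\leq d$; clearly $|T_i|\geq 2$.

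Second, I would invoke the standard projection principle for commuting isometries of a complete CAT$(0)$ space: if $H$ and $K$ are commuting subgroups of $\isom(X)$ each with a nonempty fixed set, then $\fix(H)\cap\fix(K)\neq\emptyset$. Indeed $K$ preserves the closed convex set $\fix(H)$, so the unique nearest-point projection to $\fix(H)$ is $K$-equivariant, and the projection of any point of $\fix(K)$ therefore lies in both. An easy induction extends this to any finite family of pairwise-commuting subgroups with nonempty fixed sets.

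The crux is to identify the nerve $\mathcal N$ of $\{\fix(s):s\in T_1\cup\cdots\cup T_d\}$ exactly. A simplex $\sigma$ containing some full $T_i$ is not in $\mathcal N$ because $\bigcap_{s\in\sigma}\fix(s)\subseteq\bigcap_{s\in T_i}\fix(s)=\emptyset$. Conversely, if $\sigma\cap T_i\subsetneq T_i$ for every $i$, then minimality of each $T_i$ gives $\fix(\langle\sigma\cap T_i\rangle)\neq\emptyset$, and since these subgroups commute pairwise (they lie in distinct factors of the product) the iterated projection principle yields $\bigcap_{s\in\sigma}\fix(s)\neq\emptyset$, so $\sigma\in\mathcal N$. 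Hence $\mathcal N$ equals the simplicial join $\partial\Delta^{|T_1|-1}*\cdots*\partial\Delta^{|T_d|-1}\cong\S^{\sum_i|T_i|-d-1}$, a sphere of dimension at least $d-1\geq\dim X$ because each $|T_i|\geq 2$. Theorem \ref{t:helly} then forces the identity map of this sphere to be null-homotopic, which is absurd. The main obstacle is this last identification of $\mathcal N$ as a genuine sphere: minimality handles the intra-factor part, but showing that the cross-factor simplices all lie in $\mathcal N$ requires the projection principle for commuting subgroups rather than the fact that finite subgroups have fixed points, since the groups $\langle\sigma\cap T_i\rangle$ need not be finite.
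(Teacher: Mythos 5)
Your argument is correct and is essentially the paper's own proof: Proposition~\ref{p:prodLemma} is deduced there from the Bootstrap Lemma (Proposition~\ref{l:bootstrap}) with $k_i=1$, whose proof is exactly your contradiction scheme --- minimal fixed-point-free subsets $T_i$ in each factor, identification of the nerve as the join $\partial\Delta_{|T_1|-1}\ast\cdots\ast\partial\Delta_{|T_d|-1}\approx\S^{\sum|T_i|-d-1}$ via the commuting-subgroups lemma (Lemma~\ref{p:join} and Corollary~\ref{c:commutes}), and an appeal to Theorem~\ref{t:helly}. The only cosmetic difference is that you establish the commuting-fixed-sets step by equivariant nearest-point projection, whereas the paper uses bounded orbits and circumcentres (Proposition~\ref{p:norm}); both are valid.
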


From such elementary observations, one quickly obtains results such as the following  
(Proposition \ref{p:bieb} and \ref{p:hyp}).

\begin{propA}\label{t:bieb} There exist groups $(\G_n)_{n\in\N}$
such that $\G_n$ acts properly and cocompactly by
isometries on $\E^n$ but cannot act without a fixed
point on any complete \cat space of dimension less than $n$.
There also exist hyperbolic groups $(\Lambda_n)_{n\in\N}$ with $\fd(\Lambda_n)=n$.
\end{propA}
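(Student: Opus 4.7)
The plan is to realise $\Gamma_n$ and $\Lambda_n$ as Coxeter systems $(W,S)$ in which every proper subset of the reflection generators generates a finite (spherical) subgroup, so that Theorem~\ref{t:helly} applied to the family $\{\fix(s):s\in S\}$ of reflection fixed-point sets forces a global fixed point in any action on a \cat space just below the critical dimension.

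For the Euclidean statement I would take $\Gamma_n=W$ of affine type $\widetilde A_n$, generated by reflections $s_0,\dots,s_n$ in the walls of a fundamental simplex in $\E^n$; this provides the required proper cocompact action on $\E^n$, so $\fd(\Gamma_n)\le n-1$. For the reverse inequality, suppose $\Gamma_n$ acts on a complete \cat space $X$ with $\dim X\le n-1$, and set $F_i=\fix(s_i)$. Removing any vertex from the cyclic diagram of $\widetilde A_n$ leaves the spherical diagram of type $A_n$, so every proper parabolic subgroup $\langle s_i:i\in I\rangle$ is finite and therefore has a fixed point in $X$; thus $\bigcap_{i\in I}F_i\ne\emptyset$ for every proper $I\subsetneq\{0,\dots,n\}$. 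If in addition $\bigcap_{i=0}^n F_i=\emptyset$, the nerve of $\{F_0,\dots,F_n\}$ is the full boundary $\partial\Delta^n\cong\S^{n-1}$, and Theorem~\ref{t:helly} with $d=r=n-1$ forces the identity map $\S^{n-1}\to\S^{n-1}$ to be null-homotopic --- a contradiction. Hence $\fix(\Gamma_n)\ne\emptyset$, giving $\fd(\Gamma_n)=n-1$.

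For the hyperbolic statement one runs the same argument with a word-hyperbolic Coxeter system $(W,S)$ of rank $n+2$ whose every proper subset of $S$ is spherical. Such a $W$ acts cocompactly on its Davis complex, which by Moussong's theorem is CAT$(-1)$ of dimension equal to the maximal spherical rank $n+1$, yielding $\fd(W)\le n$; the identical nerve argument, now applied to $n+2$ reflection fixed-point sets and producing nerve $\partial\Delta^{n+1}\cong\S^n$, gives $\fd(W)\ge n$. For $n\le 3$ one simply takes the Lann\'er Coxeter groups acting on compact hyperbolic simplices in $\mathbb{H}^{n+1}$. The step I expect to be most delicate --- the real obstacle --- is producing suitable $\Lambda_n$ in higher rank: the ``all proper spherical'' property forces the Tits form to have Lorentzian signature and the fundamental simplex to be a compact hyperbolic simplex, and Lann\'er's classification shows no such simplex exists in $\mathbb{H}^m$ for $m\ge 5$. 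For large $n$ one must therefore construct $\Lambda_n$ outside the single-simplex framework, for instance by amalgamating low-rank Lann\'er pieces along common finite spherical parabolics (preserving word-hyperbolicity and arranging the Helly-type nerve to remain an $n$-sphere), or by applying a Charney--Davis-style hyperbolisation to a suitable affine Davis complex.
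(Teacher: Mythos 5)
Your treatment of the Euclidean family is correct and is a legitimate variant of the paper's argument. You take the affine Coxeter group of type $\widetilde{A}_n$ and run the nerve/Helly argument on the $n+1$ reflection generators, all of whose proper sub-collections generate finite (hence elliptic) subgroups; the paper instead takes $\G_n=D_\infty^{\,n}\rtimes\sym(n)$ (equivalently $D_\infty\wr C_n$), the reflection group of a cube with coordinates permuted, and gets $\fd(\G_n)\ge n-1$ from the wreath-product consequence of the Bootstrap Lemma (Corollary \ref{c:propA+}). Both routes reduce to the same Helly-type input (your argument is exactly Corollary \ref{c:Dtor}, or the ``simplices of finite groups'' Corollary \ref{simplex} applied to the Coxeter simplex of $\widetilde{A}_n$), and both are complete.

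The hyperbolic half of your proposal has a genuine gap, and you have correctly located it yourself: by Lann\'er's classification there is no Coxeter system of rank $\ge 6$ in which every proper subset of the generating reflections is spherical, so the single-simplex Coxeter scheme cannot produce $\Lambda_n$ for $n\ge 4$, and the alternatives you sketch (amalgamating Lann\'er pieces along spherical parabolics, or a Charney--Davis-style hyperbolisation) are not carried out --- neither obviously yields a word-hyperbolic group with a finite torsion generating set in which every small subset generates a finite subgroup, which is what the nerve argument requires. The paper's resolution is to abandon reflection groups while keeping the simplex combinatorics: it uses \emph{simplices of finite groups}. Corollary \ref{simplex} shows that the fundamental group of any gallery-connected $m$-simplex of finite groups has $\fd\ge m-1$ (the codimension-one face groups play the role of your reflection generators, since any $m$ of them lie in a common finite face group), and Januszkiewicz--Swiatkowski \cite{JS} construct, in every dimension, nonpositively curved simplices of finite groups whose fundamental groups are word-hyperbolic and act properly and cocompactly on the $m$-dimensional CAT$(0)$ development, giving $\fd=m-1$ exactly. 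This sidesteps the Lann\'er obstruction because the local finite groups need not be parabolic subgroups of a geometric reflection group acting on a hyperbolic simplex. Without an input of this kind, your construction of $\Lambda_n$ for large $n$ remains a proposal rather than a proof.
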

 
\smallskip
The {\em{bootstrapping}} technique introduced in Section \ref{s:bootstrap} leads
to more subtle and powerful fixed-point criteria.
\begin{propA}[Bootstrap Lemma]\label{p:bootstrap}
Let $k_1,\dots,k_n$ be positive
integers and let $X$ be a complete \cat space of dimension less than $k_1+\dots+k_n$.
Let $S_1,\dots,S_n \subset\isom(X)$ be subsets with $[s_i,s_j]=1$
for all $s_i\in S_i$ and $s_j\in S_j\ (i\neq j)$.

If, for $i=1,\dots,n$, each $k_i$-element subset of $S_i$ has a 
fixed point in $X$, then for some $i$ every 
finite subset of $S_i$ has a fixed point.
\end{propA}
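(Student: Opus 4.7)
I will argue by contradiction. Suppose that for every $i\in\{1,\dots,n\}$ some finite subset of $S_i$ fails to have a common fixed point in $X$, and choose a minimal such $T_i\subseteq S_i$. Since every $k_i$-element subset of $S_i$ has a fixed point by hypothesis, $|T_i|\ge k_i+1$, and by minimality $\fix(U_i)$ is a nonempty closed convex subspace of $X$ for every proper subset $U_i\subsetneq T_i$ (with the convention $\fix(\emptyset):=X$).

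The crux of the proof is the following \emph{intersection claim}: for any choice of proper subsets $U_i\subsetneq T_i$ for $i=1,\dots,n$, the intersection $\bigcap_{i=1}^n\fix(U_i)$ is nonempty. I prove this by induction on $j$, showing $Y_j:=\bigcap_{i\le j}\fix(U_i)$ is nonempty. Given that $Y_{j-1}$ is a nonempty closed convex subspace (inductive hypothesis), the commutation hypothesis $[s_i,s_j]=1$ for $s_i\in S_i,\,s_j\in S_j$ with $i\ne j$ implies that $U_j$ preserves each $\fix(U_i)$ for $i<j$, hence preserves $Y_{j-1}$. Consequently the nearest-point projection $\pi\colon X\to Y_{j-1}$ commutes with every element of $U_j$, so projecting any $U_j$-fixed point $y\in X$ (one exists by minimality of $T_j$) yields $\pi(y)\in Y_{j-1}\cap\fix(U_j)=Y_j$.

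With the intersection claim in hand, the nerve of $\C=\{\fix(t):t\in T_1\sqcup\cdots\sqcup T_n\}$ can be identified exactly: a sub-collection has nonempty common intersection iff it omits at least one element from each $T_i$ (necessity follows from $\fix(T_i)=\emptyset$; sufficiency is the intersection claim). Therefore
$$\NC\;=\;\partial\Delta_{|T_1|-1}*\cdots*\partial\Delta_{|T_n|-1},$$
a sphere of dimension $r=\sum_i|T_i|-n-1\ge\sum_i k_i-1\ge\dim X$. Theorem~\ref{t:helly} then forces the identity map $\NC\to\S^r$ to be null-homotopic, which is absurd. The principal obstacle I anticipate is the intersection claim itself: although the CAT$(0)$ projection trick is classical, the commutation hypothesis must be invoked in precisely the right form to ensure both that $\pi$ is $U_j$-equivariant and that a \emph{common} fixed point of the set $U_j$, rather than merely fixed points of its individual elements, survives the projection.
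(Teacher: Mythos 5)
Your proof is correct and follows essentially the same route as the paper: a minimal bad subset $T_i$ in each $S_i$, identification of the nerve as the join $\partial\Delta_{|T_1|-1}\ast\cdots\ast\partial\Delta_{|T_n|-1}\approx\S^r$ with $r\ge\dim X$, and a contradiction with Theorem~\ref{t:helly}. The only (immaterial) difference is that you establish the intersection claim by the nearest-point-projection argument, whereas the paper quotes its Corollary~\ref{c:commutes} on commuting subgroups, which rests on the bounded-orbit/circumcentre argument instead.
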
  
The most powerful tool that we develop is  
the following {\em{Ample Duplication Criterion}};
 it is used extensively in this article and applied
to mapping class groups in \cite{mb:dim}. I expect that it will have many further applications.
 
Given a group $\G$ and positive
integers $d$ and $k_0$, we say that a finite generating set
$\A$ for a subgroup $\Lambda<\G$ has {\em{ample duplication
for dimension $d$, with base $k_0$,}} if there is a function  
$f:\mathbb{N}\to \mathbb{N}$ such that
the following conditions hold:
\begin{enumerate}
\item  Each subset $S\subset\A$ of cardinality $|S|> k_0$ can either be written
as a disjoint union $S=S_1\sqcup S_2$ where the $S_i$ are non-empty
and $\langle S_1\rangle$ normalizes $\langle S_2\rangle$,
or else there are at least $f(|S|)$ commuting conjugates of 
$\langle S\rangle$ in $\G$;
\item $d < (k-1)\,f(k)$ for $k=k_0+1,\dots,\min\{d+1,\, |\A|\}$. 
\end{enumerate}

\begin{thmA}[Ample Duplication Criterion]\label{p:scheme} 
Let $\G$ be a group acting by isometries on a complete \cat space $X$ of dimension
at most $d$, and let $\Lambda<\G$ be a subgroup with
a finite generating set $\A$ that has ample duplication
for dimension $d$, with base $k_0$. If  $\<S\>$ has a fixed point for every $S\subset\A$ with $|S|\le k_0$, then $\Lambda$ has a fixed point in $X$.
\end{thmA}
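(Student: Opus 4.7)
The plan is to prove by induction on $k=|S|$ the statement that for every $S\subseteq\A$ the subgroup $\langle S\rangle$ fixes a point of $X$; taking $S=\A$ then yields the theorem. The base case $k\le k_0$ is the given hypothesis, so the work lies in the inductive step $k>k_0$, which I would split into three regimes depending on $k$ and on which branch of the ample-duplication dichotomy $S$ falls into.

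When $k>d+1$, the inductive hypothesis gives a fixed point for every $\langle T\rangle$ with $T\subsetneq S$, so every proper sub-collection of the closed convex sets $F_s:=\fix(s)$, $s\in S$, has non-empty intersection. In particular every $(d+1)$-fold sub-intersection is non-empty, and the classical Helly theorem (a consequence of Theorem~\ref{t:helly}) forces $\bigcap_{s\in S}F_s\neq\emptyset$, which is the required fixed point.

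When $k_0<k\le d+1$ and $S$ falls in the duplication branch of the dichotomy, I would invoke the Bootstrap Lemma (Proposition~\ref{p:bootstrap}): there are $N\ge f(k)$ commuting conjugates $g_i\langle S\rangle g_i^{-1}$ in $\G$, so the finite sets $S_i:=g_iSg_i^{-1}\subset\isom(X)$ pairwise commute. Condition~(2) of ample duplication gives $\dim X\le d<(k-1)f(k)$, and every $(k-1)$-element subset of $S_i$ is a conjugate of some $\langle T\rangle$ with $T\subsetneq S$, hence fixes a point by the inductive hypothesis. Bootstrap with each $k_i=k-1$ then yields some $i$ for which every finite subset of $S_i$ fixes a point; in particular $\langle S_i\rangle=g_i\langle S\rangle g_i^{-1}$ does, and conjugating back gives a fixed point of $\langle S\rangle$.

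In the remaining split branch, $S=S_1\sqcup S_2$ with $\langle S_1\rangle$ normalising $\langle S_2\rangle$. The induction supplies a fixed point for $\langle S_2\rangle$, so $Y:=\fix(\langle S_2\rangle)$ is a non-empty, closed convex, $\langle S_1\rangle$-invariant subspace of $X$, and is itself a complete CAT$(0)$ space of dimension at most $d$. Since $S_1\subset\A$ inherits the ample-duplication property as a generating set for $\langle S_1\rangle$, the natural move is to re-apply the theorem to the action of $\langle S_1\rangle$ on $Y$. Its hypothesis asks that $\langle T\rangle$ fix a point of $Y$ for every $T\subseteq S_1$ with $|T|\le k_0$; since $\langle T\rangle$ normalises $\langle S_2\rangle$ this is equivalent to $\langle T\cup S_2\rangle$ fixing a point of $X$, and when $|S_1|>k_0$ we have $|T\cup S_2|<k$, so the inductive hypothesis supplies this at once. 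The hard part---and the main obstacle of the argument---is the residual configuration $|S_1|\le k_0$, in which $T=S_1$ gives $|T\cup S_2|=k$ and the naive recursion becomes circular. I would address this by iteratively refining the split and passing to successive fixed-point subspaces $Y\supseteq\fix(\langle S_2'\rangle)\supseteq\cdots$ until every remaining unresolved subset either falls into the base case or into the duplication branch where Bootstrap applies; verifying termination and that the small subsets retain fixed points in each successive subspace is the delicate bookkeeping at the heart of the proof.
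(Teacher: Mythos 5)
Your handling of the duplication branch and of the regime $k>d+1$ is correct and matches the paper: Conjugate Bootstrap with $k_i=k-1$ using condition (2), and the metric Helly theorem once every $(d+1)$-element subset is known to have a fixed point. The problem is the split branch, which you have made into "the heart of the proof" and then left unfinished ("verifying termination \dots is the delicate bookkeeping at the heart of the proof"). As written, this is a genuine gap: you never actually produce a fixed point for $\langle S\rangle$ when $S=S_1\sqcup S_2$, and the iterative passage to successive fixed-point subspaces that you sketch is neither set up precisely nor shown to terminate.

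The missing ingredient is Proposition \ref{p:norm}: if $H_2$ normalizes $H_1$ and both $\fix(H_1)$ and $\fix(H_2)$ are non-empty, then $\fix(H_1)\cap\fix(H_2)\neq\emptyset$. In the split branch both $S_1$ and $S_2$ are non-empty proper subsets of $S$, so $|S_1|,|S_2|<k$ and the inductive hypothesis (or the base hypothesis) already gives fixed points for $\langle S_1\rangle$ and $\langle S_2\rangle$ separately; since $\langle S_1\rangle$ normalizes $\langle S_2\rangle$, Proposition \ref{p:norm} hands you a common fixed point, which is fixed by $\langle S\rangle=\langle S_1\cup S_2\rangle$. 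This is the paper's one-line argument, and it dissolves the "circularity" you worried about in the residual configuration $|S_1|\le k_0$: there is no need to restrict the action to $Y=\fix(\langle S_2\rangle)$ or to re-apply the theorem recursively at all. (Note also that your claimed equivalence between $\langle T\rangle$ fixing a point of $Y$ and $\langle T\cup S_2\rangle$ fixing a point of $X$ is itself exactly an instance of Proposition \ref{p:norm}, so you were one step away from the short argument.)
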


\begin{remark} All of the fixed-point results displayed above   
actually hold in greater generality.  They
are valid for actions  on finite dimensional,  contractible metric spaces $X$ with the following properties:
(1) if $\G<{\rm{Isom}}(X)$ has a bounded orbit,  then it has a fixed point; and  
(2) the intersection of 
the fixed-point sets for any finite collection of subgroups $H_1,\dots, H_n<{\rm{Isom}}(X)$ is contractible if it
is non-empty (in fact one needs something less than contractibility -- see Theorem \ref{nerve}).
I have chosen to state the fixed-point results in the \cat setting because I believe
that it makes them more immediately engaging and because this is where the main interest lies.
Nevertheless,   the proofs  
are constructed so as to make it clear that conditions (1) and (2) suffice.  The only results in this article that require
additional properties of CAT$(0)$ spaces are Theorem \ref{t:hadamard} and Corollaries \ref{c:mcg} and \ref{c:ss}.
\end{remark}

\subsection*{History and Comparison}
In the remainder of this introduction I shall explain how the results established here relate to earlier work 
of a similar nature.
Following Serre \cite{serre},
one says that a group $\Gamma$ has {\em{property FA}}
if it cannot act without a  fixed point on any simplicial tree, and
following  \cite{CV} one says that $\Gamma$  
has {\em{property $\FR$}} if it fixes a point whenever it acts by isometries on an $\mathbb{R}$-tree 
 (i.e.~a complete CAT$(0)$ space of topological
dimension 1). In our terminology, $\G$ has property
$\FR$ if and only if $\fd(\G)\ge 1$. There are finitely generated groups that have
FA but not $\FR$; see \cite{minasyan}. Serre \cite{serre} proved
that $\sln$ has property $\FR$ if $n\ge 3$, and Bogopolski \cite{oleg}
and Culler-Vogtmann \cite{CV} strengthened this by proving that $\aut$ and $\outn$ have $\FR$
if $n\ge 3$.  These proofs rely in an essential way on
the fact that isometries of $\R$-trees are semisimple, but there is a later proof which does not rely on this fact \cite{mb:zies},
and that is the forerunner of what we do here.  For $n\ge 5$, 
property $({\rm{T}})$,  established by Kaluba, Kielak and Nowak \cite{KKN},  tells us that $\out$
cannot act without a fixed point on any finite dimensional CAT$(0)$ cube complex \cite{KKN}. 

 In \cite{farb} Farb  considered a generalization of
property FA. He defines a group $\G$ to have property ${\rm{FA}}_n$ if
it fixes  a point whenever it acts by simplicial isometries on a 
CAT$(0)$ piecewise-Euclidean complex of dimension at most $n$
that has only finitely many isometry types of cells.
By exploiting a homological version of Helly's theorem, we was
able to prove that various groups of geometric interest have
property ${\rm{FA}}_n$.  He also considered more general actions on
non-polyhedral spaces, but retained the condition that the action must be by semisimple isometries.
(Cellular actions on polyhedral complexes with only finitely many isometry types of cells
are necessarily by semisimple isometries \cite{mb:pams}.)

We dispense with these conditions and
consider instead actions by isometries
on arbitrary complete, finite-dimensional CAT$(0)$ spaces, 
placing no other 
constraints on the structure of the space or on the type
of the action.  This greater generality is important because  many
of the groups that we wish to consider, such as $\sln, n\ge 3$,   
admit interesting actions with parabolics on finite dimensional CAT$(0)$ spaces (e.g.~the symmetric 
space for ${\rm{SL}}(n,\R)$) but have a fixed point whenever they act by semisimple isometries (see Proposition \ref{p:sln}).

I realised one could upgrade the ideas from \cite{mb:zies} to prove fixed-point
theorems in higher dimensions after hearing Benson
Farb's lecture on ${\rm{FA}}_n$ in Neuchatel in the summer of 2000, and I first
presented Theorem \ref{i:main} at the Oberwolfach meeting
on Geometric Methods in Group Theory later that year.
My intermittent efforts to improve the bounds
in the intervening years have yielded a number of related results, 
some of which appeared in \cite{mb:bill}, \cite{mb:dim}, \cite{gang6}, \cite{mb:profinite}
 and \cite{mb:rddh}, but 
I have been unable to improve the bound in Theorem \ref{i:main} and I apologise for waiting so
long to publish the proof. 
In the meantime, Olga Varghese \cite{olga} wrote a simpler version of the argument for $\aut$ 
(avoiding the Ample Duplication Criterion) 
that yields a weaker bound, and the use of Helly-type theorems to prove fixed-point results has found
many further applications, for example \cite{Ye1, Ye2}.

\noindent{\bf{Acknowledgements:}} 
This research was supported by a Fellowship from the EPSRC and by a Royal
Society Wolfson Research Merit Award. During the preparation of this manuscript, I benefited from the hospitality of the EPFL (Lausanne), l'Universit\'e de Gen\`eve, the Mittag Leffler Institute,  the Hausforff Institute in Bonn, and Stanford University.
I thank them all.  I thank Anders Karlsson,  Dawid Kielak and Andrew Putman for helpful correspondence,  and I
thank the referees for their careful reading and thoughtful comments.

\section{Isometries of \cat spaces}

In this section I'll gather the basic facts that we'll need about isometries of 
\cat spaces. 
The standard reference for this material is \cite{BH}.

Let $X$ be a  geodesic metric space.
A geodesic triangle $\Delta$ in $X$ consists of three points $a,b,c\in X$ and 
three geodesics $[a, b],\, [b,c],\, [c,a]$. Let $\-\Delta\subset\E^2$ be a triangle
in the Euclidean plane with the same edge lengths as $\Delta$
and let $\overline x\mapsto
x$ denote the map  $\-\Delta\to\Delta$
that sends each side of $\-\Delta$ isometrically
onto the corresponding side of $\Delta$. 
One says that {\em{$X$ is a 
{\rm{CAT}}$(0)$ space}} if for all $\Delta$ and all $\-x,\-y\in\-\Delta$ the inequality
 $d_X(x,y)\le d_{\E^2}(\-x, \-y)$ holds.

In a {\rm{CAT}}$(0)$ space there is a unique geodesic $[x,y]$ joining
each pair of points $x,y\in X$.
A subspace $Y\subset X$ is said to be {\em{convex}} if 
$[y,y']\subset Y$ whenever $y,y'\in Y$.  

Given a subset $H\subset\isom(X)$, we denote its
set of common  fixed points by
$$\fix(H):=\{x\in X\mid \forall h\in H,\  h.x=x\}.$$
Note that $\fix(H)$  is closed and convex.

The isometries of a {\rm{CAT}}$(0)$ space $X$ divide naturally into
 {\em{semisimple}} isometries, i.e. those for which there exists $x_0\in X$ such that
$d(\gamma.x_0, \, x_0) = |\g|$ where $|\g|:=\inf\{d(\gamma.y, \, y) \mid y\in X\}$, and the remainder, which
are said to be
{\em{parabolic}}.  
 Semisimple isometries are divided into {\em{hyperbolics}}, for which $|\g|>0$,
and {\em{elliptics}}, which have fixed points.

Any bounded set in a complete \cat space $X$ is contained in
a unique closed ball of smallest radius (\cite{BH}, p.178). If the bounded set
is the orbit of a point under the action of a group $\G$
acting by isometries,
then the centre of the ball will be fixed by $\G$. Thus we have:

\begin{prop}\label{bddOrbit}
 If $X$ is a complete \cat space and $\G<\isom(X)$, then the following conditions are equivalent:
 \begin{itemize}
 \item $\G$ has a bounded orbit in $X$;
 \item all $\G$-orbits in $X$ are bounded;
 \item   $\fix(\G)$ is non-empty.
 \end{itemize}
\end{prop}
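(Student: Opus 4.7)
The plan is to prove a cycle of implications, leveraging the circumcentre construction that was invoked immediately before the statement.

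First I would dispose of the easy directions. The implication $\fix(\G)\neq\emptyset \Rightarrow$ some orbit is bounded is immediate, since a fixed point is a singleton (hence bounded) orbit. The implication ``all orbits bounded'' $\Rightarrow$ ``some orbit bounded'' is trivial, provided one observes that $X\neq\emptyset$ (we may assume so, else there is nothing to prove).

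Next I would handle the slightly less trivial step: if one orbit $\G.x_0$ is bounded with diameter $D$, then every orbit $\G.y$ is bounded. This follows from the triangle inequality applied to isometries, namely
\[
d(\g.y,\g'.y) \le d(\g.y,\g.x_0) + d(\g.x_0,\g'.x_0) + d(\g'.x_0,\g'.y) \le 2\,d(x_0,y) + D,
\]
using that $\g$ and $\g'$ are isometries fixing distances of the form $d(\cdot,\cdot)$.

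The key step is ``bounded orbit $\Rightarrow$ $\fix(\G)\neq\emptyset$.'' Here I would invoke the fact, stated just before the proposition and sourced from \cite{BH} p.178, that every bounded subset $B$ of a complete CAT$(0)$ space is contained in a \emph{unique} closed ball of minimal radius. Let $B=\G.x_0$ be the bounded orbit, and let $c\in X$ be the centre of its unique minimal enclosing ball $\overline{B}(c,r)$. For any $\g\in\G$, the set $\g.B = B$ (since orbits are $\G$-invariant), and $\g.\overline{B}(c,r)=\overline{B}(\g.c,r)$ is a closed ball of the same radius containing $B$. Uniqueness forces $\g.c = c$, so $c\in\fix(\G)$.

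I do not foresee any real obstacle; the main subtlety is simply the appeal to uniqueness of the circumcentre, which is a genuine CAT$(0)$ phenomenon (the analogous statement fails in general geodesic spaces). The whole argument is three or four lines once that ingredient is cited, and the proposition is in effect an immediate corollary of the Bruhat--Tits fixed-point theorem in the form stated on p.178 of \cite{BH}.
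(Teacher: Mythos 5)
Your proposal is correct and follows exactly the paper's route: the paper derives the proposition from the same fact (\cite{BH}, p.178) that a bounded set in a complete CAT$(0)$ space lies in a unique smallest closed ball, whose centre is then fixed by $\G$ since $\G$ permutes the orbit. The remaining implications you spell out (triangle inequality for ``one bounded orbit implies all bounded'', and the trivial converse) are left implicit in the paper but are the intended elementary steps.
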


\begin{cor} \label{fp}\label{c:finite}
Whenever a finite group  acts by isometries
on a complete  \cat space, it fixes a point.
\end{cor}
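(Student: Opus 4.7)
The plan is to deduce the corollary directly from Proposition \ref{bddOrbit} together with the observation that finite groups automatically produce bounded orbits. Let $G$ be a finite group acting by isometries on a complete \cat space $X$. Pick any $x_0 \in X$; the orbit $G.x_0$ is a finite subset of $X$ and hence is bounded (its diameter is at most the maximum of $d(g.x_0, h.x_0)$ over the finitely many pairs $g,h \in G$).

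Once a single bounded orbit is exhibited, Proposition \ref{bddOrbit} gives the conclusion for free: the equivalence asserts that the existence of one bounded $G$-orbit forces $\fix(G)$ to be non-empty. So the whole argument is one line after invoking that proposition.

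If one wants to recall why Proposition \ref{bddOrbit} is true at this point, the mechanism is the circumcenter construction already alluded to in the excerpt: any bounded subset $B \subset X$ is contained in a unique smallest closed ball (Bruhat--Tits), and for $B = G.x_0$ the uniqueness forces the centre of that ball to be $G$-invariant, hence fixed. I don't foresee any obstacle here; the only substantive input is the Bruhat--Tits fixed-point theorem (equivalently, uniqueness of the circumcenter in a complete \cat space), which has already been imported from \cite{BH} and packaged as Proposition \ref{bddOrbit}. So the corollary is essentially a formal consequence, and the proof need only be a couple of sentences.
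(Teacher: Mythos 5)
Your argument is exactly the paper's: a finite orbit is bounded, and Proposition \ref{bddOrbit} (resting on the Bruhat--Tits circumcentre construction already quoted from \cite{BH}) then yields a fixed point. The proposal is correct and matches the intended proof.
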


\subsection*{Normalised and commuting subgroups}

\begin{prop}\label{p:norm} Let $\G$ be a group,
let $H_1,H_2<\G$ be subgroups, and suppose
that $H_2$ normalizes $H_1$.
Whenever $\G$ acts by
isometries on a complete \cat space, if $\fix(H_1)$
and $\fix(H_2)$ are non-empty, then
$\fix(H_1)\cap \fix(H_2)$ is non-empty.
\end{prop}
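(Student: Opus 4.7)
The plan is to reduce the statement to a single application of Proposition \ref{bddOrbit} applied to the action of $H_2$ on the convex subspace $\fix(H_1)$.

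First I would verify that $H_2$ preserves $\fix(H_1)$ setwise. Given $x \in \fix(H_1)$ and $h_2 \in H_2$, for any $h_1 \in H_1$ one has $h_1 \cdot (h_2 \cdot x) = h_2 \cdot (h_2^{-1} h_1 h_2) \cdot x$, and since $H_2$ normalizes $H_1$, the element $h_2^{-1} h_1 h_2$ lies in $H_1$ and fixes $x$, so $h_2 \cdot x \in \fix(H_1)$. Thus $H_2$ acts by isometries on the closed convex subspace $\fix(H_1)$, which is itself a complete \cat space.

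Next I would show that the $H_2$-action on $\fix(H_1)$ has a bounded orbit. Pick any $x_0 \in \fix(H_1)$ (non-empty by hypothesis) and any $y_0 \in \fix(H_2)$ (also non-empty by hypothesis). For every $h_2 \in H_2$,
\[
d(h_2 \cdot x_0,\, x_0) \le d(h_2 \cdot x_0,\, y_0) + d(y_0,\, x_0) = d(h_2 \cdot x_0,\, h_2 \cdot y_0) + d(y_0,\, x_0) = 2\, d(x_0,\, y_0),
\]
so the $H_2$-orbit of $x_0$ is bounded, and it lies in $\fix(H_1)$ by the previous paragraph.

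Finally, applying Proposition \ref{bddOrbit} to the action of $H_2$ on the complete \cat space $\fix(H_1)$, one concludes that $H_2$ fixes some point in $\fix(H_1)$, i.e.\ $\fix(H_1) \cap \fix(H_2)$ is non-empty. There is no serious obstacle here; the only thing one needs to check carefully is that the circumcentre argument packaged in Proposition \ref{bddOrbit} can be applied inside the closed convex subspace $\fix(H_1)$, which is immediate because closed convex subspaces of complete \cat spaces are themselves complete \cat spaces.
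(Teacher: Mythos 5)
Your proof is correct and is essentially the paper's argument: both exploit the normalization to see that the $H_2$-orbit of a point of $\fix(H_1)$ stays in $\fix(H_1)$ and is bounded (via a point of $\fix(H_2)$), then apply the circumcentre result (Proposition \ref{bddOrbit}). The only cosmetic difference is that the paper applies that proposition to $\langle H_1,H_2\rangle$ acting on $X$ (noting its orbit of $x$ is just the $H_2$-orbit), whereas you restrict $H_2$ to the closed convex subspace $\fix(H_1)$; both yield the same conclusion.
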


\begin{proof}  Because $H_2$ normalizes $H_1$, the orbit of any $x\in \fix(H_1)$ under
$\<H_1, H_2\>$ is simply the $H_2$-orbit of $x$.  Since $\fix(H_2)$ is non-empty,
this orbit is bounded.   Thus we may appeal to Proposition \ref{bddOrbit}.
\end{proof}

\begin{cor}\label{c:commutes}
 Let $X$ be a complete  {\rm{CAT}}$(0)$ space. If
the subgroups $H_1,\dots, H_\ell$ of $\isom(X)$ pairwise commute and
$\fix(H_i)$ is non-empty for $i=1,\dots,\ell$, then 
$\bigcap_{i=1}^\ell \fix(H_i)$ is non-empty.
\end{cor}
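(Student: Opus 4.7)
The plan is to deduce the corollary from Proposition \ref{p:norm} by induction on $\ell$, exploiting the fact that ``commuting'' is a much stronger condition than ``normalizes,'' so that the partial subgroups generated during the induction behave well under the remaining factors.

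The base case $\ell=1$ is simply the hypothesis. For the inductive step, assume the result is known for $\ell-1$ commuting subgroups. Set $K := \langle H_1,\dots, H_{\ell-1}\rangle$. A point lies in $\fix(K)$ if and only if it lies in $\fix(H_i)$ for every $i<\ell$, so the inductive hypothesis applied to $H_1,\dots,H_{\ell-1}$ gives
\[
\fix(K) \;=\; \bigcap_{i=1}^{\ell-1}\fix(H_i) \;\neq\; \emptyset.
\]
Because $H_\ell$ commutes elementwise with each $H_i$ for $i<\ell$, every element of $H_\ell$ commutes with every generator of $K$, and hence $H_\ell$ centralizes $K$; in particular $H_\ell$ normalizes $K$.

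Now I would apply Proposition \ref{p:norm} with the roles ``$H_1 := K$'' and ``$H_2 := H_\ell$.'' Both fixed-point sets are non-empty ($\fix(K)$ by the induction, $\fix(H_\ell)$ by hypothesis), and the normalization condition has just been verified. The conclusion of Proposition \ref{p:norm} gives $\fix(K) \cap \fix(H_\ell) \neq \emptyset$, which is exactly $\bigcap_{i=1}^{\ell}\fix(H_i) \neq \emptyset$, completing the induction.

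There is no real obstacle here; the one small point to be careful about is making sure the inductive hypothesis is applied to the correct family. One must verify that $H_1,\dots,H_{\ell-1}$ themselves pairwise commute (inherited from the hypothesis on the full family), and that the inductive conclusion literally produces a common fixed point of these subgroups, so that $\fix(K)$ is non-empty rather than merely each $\fix(H_i)$ being non-empty. Once this bookkeeping is in place, Proposition \ref{p:norm} does all the work, and no further CAT$(0)$ geometry is needed beyond what is already encoded in that proposition (i.e.\ the circumcentre argument via Proposition \ref{bddOrbit}).
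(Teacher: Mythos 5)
Your proof is correct and is exactly the intended argument: the paper states this as an immediate corollary of Proposition \ref{p:norm}, and the induction you describe (with $K=\langle H_1,\dots,H_{\ell-1}\rangle$ normalized --- indeed centralized --- by $H_\ell$, and $\fix(K)=\bigcap_{i<\ell}\fix(H_i)$) is the standard way to extract it. No issues.
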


\subsection*{Subgroups of finite index}

\begin{lemma}\label{l:fi} Let $H<\G$ be a subgroup of finite
index. If $\G$  acts by isometries on a complete \cat space and
$\fix(H)$ is non-empty, then $\fix(\G)$ is non-empty.
\end{lemma}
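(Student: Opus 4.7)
The plan is to reduce to Proposition \ref{bddOrbit} by producing a bounded $\Gamma$-orbit. Pick any $x \in \fix(H)$, which exists by hypothesis, and consider its $\Gamma$-orbit $\Gamma.x$. Since $H$ fixes $x$, the orbit factors through the coset space $\Gamma/H$: explicitly, if $\g_1, \g_2 \in \Gamma$ lie in the same left coset of $H$, then $\g_1 = \g_2 h$ for some $h \in H$, so $\g_1.x = \g_2 h.x = \g_2.x$. Therefore $|\Gamma.x| \le [\Gamma:H] < \infty$.

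A finite subset of a metric space is automatically bounded, so $\Gamma.x$ is a bounded $\Gamma$-orbit in the complete CAT$(0)$ space $X$. Proposition \ref{bddOrbit} then gives that $\fix(\Gamma)$ is non-empty, which is the desired conclusion.

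There is no real obstacle here: the argument is a one-line application of the circle-of-centres lemma embodied in Proposition \ref{bddOrbit}, once one observes that fixing $x$ under a finite-index subgroup forces the full orbit to be finite. Note that this proof uses only condition (1) of the remark following the Ample Duplication Criterion, namely that subgroups with bounded orbits have fixed points; so the lemma holds in the broader generality advertised there.
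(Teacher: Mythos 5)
Your proof is correct and is essentially the paper's own argument: observe that a point fixed by $H$ has finite $\G$-orbit (bounded by $[\G:H]$), hence bounded, and apply Proposition \ref{bddOrbit}. You have merely spelled out the coset computation that the paper leaves implicit.
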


\begin{proof} 
If $x$ is a fixed point of $H$, then the $\G$-orbit of $x$ is finite, so Proposition \ref{bddOrbit} applies.
\end{proof}

\begin{corollary}\label{fixFI}
If $H<\G$ has finite index, then $\fd(H)\le\fd(\G)$.
\end{corollary}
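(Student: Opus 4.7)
The plan is to unpack the definition of $\fd$ and then apply Lemma \ref{l:fi} directly. Recall from the introduction that $\fd(\Gamma)+1$ is defined as the smallest integer $d$ such that $\Gamma$ admits a fixed-point-free isometric action on some complete CAT$(0)$ space of dimension $d$. Equivalently, $\fd(\Gamma) \ge k$ if and only if every isometric action of $\Gamma$ on a complete CAT$(0)$ space of dimension at most $k$ has a global fixed point. So, to prove $\fd(H) \le \fd(\G)$, I will set $k = \fd(H)$ and verify that $\fd(\G) \ge k$; that is, I will show that any isometric action of $\G$ on a complete CAT$(0)$ space $X$ with $\dim X \le k$ must have a fixed point.

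Given such an action of $\G$ on $X$, I would restrict it to the finite-index subgroup $H$. Since $\dim X \le k = \fd(H)$, the defining property of $\fd(H)$ ensures that this restricted action of $H$ on $X$ has a global fixed point, i.e.\ $\fix(H) \neq \emptyset$. At this point Lemma \ref{l:fi} applies verbatim: it states that whenever $H$ has finite index in $\G$ and $\fix(H)$ is non-empty for an isometric action of $\G$ on a complete CAT$(0)$ space, $\fix(\G)$ is also non-empty. Therefore $\G$ fixes a point of $X$, establishing $\fd(\G) \ge k = \fd(H)$.

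There is no real obstacle here; the corollary is essentially a rewording of Lemma \ref{l:fi} once one has carefully parsed the definition of $\fd$. The only mild subtlety to flag is the convention about empty or trivial cases: if $\fd(H) = -1$ (or however one chooses to encode the absence of any fixed-point guarantee, e.g.\ a free action on a point or on $\mathbb R^0$ contradiction), the inequality is vacuous, so one may safely assume $\fd(H) \ge 0$ throughout. Everything else is immediate.
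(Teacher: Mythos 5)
Your proof is correct and is exactly the argument the paper intends: the corollary is stated without proof immediately after Lemma \ref{l:fi} precisely because it amounts to restricting a $\G$-action to $H$, invoking the definition of $\fd(H)$ to get a fixed point for $H$, and then applying Lemma \ref{l:fi}. Your unpacking of the definition of $\fd$ and the remark about degenerate cases are both fine; nothing is missing.
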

  
This inequality can be strict. For example,  it can be deduced from Corollary \ref{c:propA+} that
for $\G=\text{SL}(2,\Z)\wr C_n$ one has $\fd(\G)=n-1$, if $C_n$ is the
cyclic group of order $n$. But $\G$
contains $H=\text{SL}(2,\Z)\times
\dots\times \text{SL}(2,\Z)$ as a subgroup of finite index, and
this acts non-trivially on a tree via its projection to any factor,
so $\fd(H)=0$.

\section{Dimension, Nerves, and Helly-type Theorems}\label{s:helly}

In this section we record the basic facts about dimension 
that we need, along with the
consequences of Theorem \ref{t:helly} that will be most useful for us.

The treatise of Hurewicz and Wallmann \cite{HW} summarizes the
classical results on dimension theory due to Borsuk, Kuratowski and others,
proving in particular that various definitions of dimension (inductive covering
dimension, Cech cohomological dimension, etc.) are equivalent for second countable
metric spaces. For our purposes, the most useful definition is the following.

\begin{definition} A topological space $X$ has dimension
at most $d$, written $\dim(X)\le d$, if for every closed
subspace $K\subseteq X$, every continuous map
$f:K\to\S^r$ to a sphere of dimension $r\ge d$ extends to
a continuous map $X\to\S^r$.
\end{definition}

\subsection{Spaces with convex metrics}

The metric
on a geodesic space $X$ is  {\em{convex}} if 
for any pair of geodesics $c_1,c_2:[0,1]\to X$ parameterized
proportional to arc length, $t\mapsto d(c_1(t),\, c_2(t))$ is
a convex function.  In this circumstance, we say that $X$ is a {\em{convex metric space}}. 
These spaces were studied by Busemann \cite{bus}.
\cat spaces are convex in this sense \cite{BH}, p.120. 
In a convex space there is a unique geodesic
segment joining each pair of points, and the obvious retraction along
geodesics to an arbitrary basepoint shows that the space is
contractible.

The argument by which one deduces the classical Helly Theorem from Theorem \ref{t:helly}
can be abstracted as follows.

\begin{definition}\label{d:empty}
Given a simplicial
complex $K$ and a set of vertices $V\subset K^{(0)}$,
we say that $V$ {\em{spans an empty $r$-simplex}}
if $|V|=r+1$ and every proper subset of $V$ is the vertex
set of a simplex in $K$ but $V$ itself is not.
\end{definition}

\begin{corollary}[No Empty Simplices] \label{empty}
If $X$ is a complete convex metric space of 
topological dimension $\dim (X)\le d$ and  $\C$  
is a  finite collection of closed convex subsets, then the nerve
$\NC$ has no empty $r$-simplices for $r>d$.
\end{corollary}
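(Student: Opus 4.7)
The plan is to argue by contradiction, using Theorem \ref{t:helly} to block the existence of an empty simplex of dimension exceeding $d$. Suppose $V=\{C_{i_0},\dots,C_{i_r}\}\subset\C$ spans an empty $r$-simplex of $\NC$ with $r>d$. By the definition of an empty simplex, every proper sub-collection of $V$ has non-empty intersection, while $V$ itself does not.

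I would pass to the sub-collection $\C':=\{C_{i_0},\dots,C_{i_r}\}$ and analyse its nerve $\NCp$. Because each proper subset of $V$ is realised as a face while the top-dimensional face is missing, $\NCp$ is exactly the boundary $\partial\Delta^{r}$ of the abstract $r$-simplex on the vertex set $V$. Hence there is a homeomorphism $\NCp\cong \S^{r-1}$.

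Now I would apply Theorem \ref{t:helly} to the finite collection $\C'$ of closed convex subsets of the complete convex metric space $X$, with target sphere $\S^{r-1}$. Since $r>d$, we have $r-1\ge d$, so the hypotheses of the theorem are met and every continuous map $\NCp\to\S^{r-1}$ is null-homotopic. In particular, identifying $\NCp$ with $\S^{r-1}$, the identity map $\S^{r-1}\to\S^{r-1}$ would be null-homotopic, contradicting the fact that $\S^{r-1}$ is not contractible (equivalently, that the identity has degree $1$). This contradiction forces $r\le d$, proving the corollary.

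There is no real obstacle here beyond correctly identifying the nerve of the offending sub-collection as $\partial\Delta^{r}\cong\S^{r-1}$ and tracking the dimension bookkeeping so that the sphere to which Theorem \ref{t:helly} is applied has dimension at least $d$; the inequality $r-1\ge d$ follows immediately from the standing assumption $r>d$, so the argument closes on a single invocation of the Helly-type theorem.
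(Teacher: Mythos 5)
Your argument is correct and is essentially identical to the paper's proof: both identify the nerve of the offending sub-collection $\C'$ as $\partial\Delta_r\approx\S^{r-1}$ and invoke Theorem \ref{t:helly} (noting $r-1\ge d$) to get a contradiction via the non-null-homotopic identity map. No issues.
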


\begin{proof} An empty $r$-simplex would correspond to a sub-collection $\C'\subseteq\C$ with $\NCp = \partial\Delta_r\approx \S^{r-1}$. Applying Theorem \ref{t:helly} to $\C'$, we obtain a contradiction.
\end{proof}

The following consequence is well known.
 
\begin{corollary}[Metric Helly]\label{metric:helly}  Let $C_1,\dots,C_m$ be closed convex
subspaces in a convex metric space $X$. If $\dim(X)\le d$ and
$\bigcap_{i\in I} C_i\neq\emptyset$ for each $I\subset\{1,\dots,m\}$
with $|I|\le d+1$, then the intersection of $C_1,\dots,C_m$ is non-empty.
\end{corollary}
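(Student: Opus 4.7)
The plan is to reduce Corollary \ref{metric:helly} to Corollary \ref{empty} by a minimal-counterexample argument.

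First I would dispatch the trivial range $m\le d+1$: in that case the hypothesis applied to $I=\{1,\dots,m\}$ itself is exactly the desired conclusion, so there is nothing to prove.

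Now assume $m\ge d+2$ and suppose, for a contradiction, that the conclusion fails for some finite collection of closed convex subsets satisfying the $(d{+}1)$-wise intersection hypothesis. Among all such failing collections, choose one $\mathcal{C}=\{C_1,\dots,C_m\}$ of \emph{minimal} cardinality $m$. By minimality, every proper subcollection $\mathcal{C}'\subsetneq\mathcal{C}$ still satisfies the hypothesis (the $(d{+}1)$-wise intersection condition is inherited) and has $|\mathcal{C}'|<m$, so $\bigcap_{C\in\mathcal{C}'}C\neq\emptyset$. In nerve language this says that every proper subset of the vertex set of $\mathcal{N}(\mathcal{C})$ spans a simplex; on the other hand $\bigcap_{i=1}^m C_i=\emptyset$, so the full vertex set does \emph{not} span a simplex. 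Thus the $m$ vertices of $\mathcal{N}(\mathcal{C})$ span an empty $(m{-}1)$-simplex in the sense of Definition \ref{d:empty}.

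Since $m\ge d+2$, we have $m-1\ge d+1>d$. But Corollary \ref{empty} forbids the nerve of a finite collection of closed convex subsets of a convex metric space of dimension at most $d$ from containing any empty $r$-simplex with $r>d$. This contradiction proves that no minimal counterexample exists, and hence the intersection $\bigcap_{i=1}^m C_i$ is non-empty in general.

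There is no real obstacle here: the main content has already been packaged into Corollary \ref{empty}, which in turn rests on Theorem \ref{t:helly}. The only point that warrants a moment's care is verifying that the $(d{+}1)$-wise intersection hypothesis passes to subcollections (immediate from the definition) so that the minimal-counterexample reduction is legitimate.
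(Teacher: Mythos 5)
Your proof is correct and follows essentially the same route as the paper: both reduce the statement to Corollary \ref{empty} by a minimality argument that exhibits an empty simplex of dimension greater than $d$. The only (immaterial) difference is that the paper minimizes the dimension $r>d$ of a missing simplex within the fixed collection, whereas you minimize the cardinality of a counterexample collection --- the latter being exactly the argument sketched in the introduction for recovering the classical Helly theorem from Theorem \ref{t:helly}.
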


\begin{proof} If the conclusion were false, there would be a least $r > d$ such that some $r$-simplex
of $\Delta_m$ was not contained in $\NC$, and this would provide an empty simplex.
\end{proof}

\subsection{Joins and spheres}
The join $K\ast L$ of two simplicial
complexes $K$ and $L$ is a simplicial complex whose
vertex set $(K\ast L)^{(0)}$ is the disjoint union of $K^{(0)}$
and $L^{(0)}$; for each $r$-simplex $[u_0,\dots,u_r]$
 in $K$ and $s$-simplex $[v_0,\dots,v_s]$ in $L$, there is
an $(r+s+1)$-simplex 
$[u_0,\dots,u_r,v_0,\dots,v_s]$ in $K\ast L$. Of particular importance
for us is the observation that $S^1:=\S^0\ast\S^0$ is, topologically,
a 1-sphere (more precisely a graph with four edges and four
vertices of valence 2), and that if one defines $S^n$ iteratively
by $S^n := S^{n-1}\ast\S^0$, then $S^n$ is homeomorphic
to the $n$-sphere $\S^n$.

The following special case of Theorem \ref{t:helly} will
be useful in the sequel.

\begin{corollary}\label{prods}
Let $C_{i0}, C_{i1}$ be closed
convex subsets of a convex metric space $X$,
with $i=0,\dots,d$. If $\dim X\le d$ and
 $C_{i\epsilon}\cap C_{k\delta}$ is non-empty for all $\epsilon,\delta \in\{0,1\}$
whenever $i\neq k$,  then $C_{i0}\cap C_{i1}$
is non-empty for some $i\in\{0,\dots,d\}$.
\end{corollary}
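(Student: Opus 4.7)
The plan is to argue by contradiction: suppose $C_{i0}\cap C_{i1}=\emptyset$ for every $i\in\{0,\dots,d\}$, and derive a null-homotopy that contradicts Theorem~\ref{t:helly}.

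First I would form the nerve $\NC$ of the full collection $\mathcal{C}=\{C_{i\epsilon}\}_{i,\epsilon}$ and observe that, under the standing assumption, no simplex of $\NC$ can contain both $(i,0)$ and $(i,1)$ for any single index $i$: their joint presence would force $C_{i0}\cap C_{i1}$ to be non-empty. Consequently every simplex of $\NC$ has pairwise distinct $i$-coordinates, so $\NC$ embeds as a subcomplex of the join $K:=\S^0\ast\cdots\ast\S^0$ with $d+1$ factors (the $i$-th being $\{(i,0),(i,1)\}$), which by the preceding subsection is homeomorphic to $\S^d$.

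Next I would verify that $\NC$ is actually all of $K$. Any transversal $T=\{(0,\epsilon_0),\dots,(d,\epsilon_d)\}$ specifies a family of $d+1$ closed convex sets in which every pair intersects, thanks to the cross-index hypothesis. Using the bound $\dim X\le d$ together with Metric Helly (Corollary~\ref{metric:helly}), I would promote this pairwise information to non-emptiness of the full intersection $\bigcap_{v\in T}C_v$, so that $T$ itself spans a $d$-simplex of $\NC$; every proper sub-transversal is then automatically a face of such a $T$, and consequently $\NC=K\cong\S^d$. Under this identification the identity map $\NC\to\S^d$ represents a generator of $\pi_d(\S^d)$ and is therefore not null-homotopic. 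But Theorem~\ref{t:helly} applied with $r=d$ says that every continuous map from $\NC$ to $\S^d$ is null-homotopic, because $\dim X\le d$. This is the desired contradiction.

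The step I expect to be the principal obstacle is the identification $\NC=K$, namely turning the pairwise cross-index hypothesis, together with the ambient dimension bound, into non-emptiness of every transversal intersection. Once that is in place, the rest of the argument is a clean invocation of the sphere--join correspondence established in the preceding subsection and of Theorem~\ref{t:helly}.
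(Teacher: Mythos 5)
Your strategy --- reduce to showing that the nerve is the full join $\S^0\ast\cdots\ast\S^0\cong\S^d$ and then contradict Theorem \ref{t:helly} via the identity map --- is the same as the paper's, and you have correctly located the crux in the identification $\NC=K$. But your proposed justification of that step fails. Corollary \ref{metric:helly} requires $\bigcap_{i\in I}C_i\neq\emptyset$ for \emph{every} index set $I$ with $|I|\le d+1$, whereas the cross-index hypothesis only supplies the cases $|I|\le 2$. Pairwise intersection of $d+1$ convex sets in a $d$-dimensional space does not force a common point once $d\ge 2$: three lines in $\E^2$ bounding a triangle pairwise intersect but have empty triple intersection. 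So the promotion of a transversal's pairwise data to non-emptiness of its total intersection is unjustified, and your argument is complete only for $d\le 1$ (where ``pairwise'' and ``$|I|\le d+1$'' coincide).

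Be aware, moreover, that this gap cannot be closed from the stated hypotheses alone. Take $X=\E^2$, $d=2$, and three pairs of parallel lines in three distinct directions, in general position: every cross-index pair meets, every same-index pair is empty, and the nerve is the $1$-skeleton of the octahedron rather than all of $\S^2$, so Theorem \ref{t:helly} yields no contradiction (every map to $\S^2$ from a complex with vanishing $H^2$ is null-homotopic). The paper's own one-line proof simply \emph{asserts} that the nerve would be the full join of $(d+1)$ $0$-spheres; in the applications actually made (Lemma \ref{p:join} and the Bootstrap Lemma) that join structure is supplied by commutativity of the relevant isometries via Corollary \ref{c:commutes}, i.e.\ one knows there that \emph{every} transversal intersection is non-empty, not merely the pairwise ones. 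If you strengthen the hypothesis to ``$\bigcap_i C_{i\epsilon_i}\neq\emptyset$ for every choice of signs $\epsilon_0,\dots,\epsilon_d$'' --- or derive it from context as the paper effectively does --- then the rest of your argument goes through verbatim.
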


\begin{proof} If the conclusion of the 
corollary failed then
the nerve of the collection $\C=\{C_{ij}\}_{i,j}$ would be a
join of $(d+1)$ 0-spheres, i.e.~$\NC\approx\S^d$.
\end{proof}

\section{The $\Delta_n$ criterion}\label{s:ftp}
We want to apply the preceding results to 
collections of fixed-point sets. Our basic goal is to promote the existence of
fixed points for collections of subgroups in a fixed group 
to the existence of fixed points for the ambient group. 

The following basic example of how to do this is now well known and has been frequently used.

 \begin{prop}[$\Delta_n$ Criterion]\label{basicHelly}\label{p:Delta}
  Let $\G$ be a group that is
 generated by the union of finitely many subsets $A_i$
 and let $X$ be a complete \cat space of dimension $\le d$ on which $\G$
 acts by isometries. If the subgroup generated by the union of each
 $d+1$ of the sets $A_i$ has a fixed point in $X$, then
 $\G$ has a fixed point.
 \end{prop}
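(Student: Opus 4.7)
The plan is to reduce the statement directly to the Metric Helly Corollary (Corollary \ref{metric:helly}) by choosing the right family of closed convex subsets. Specifically, for each $i=1,\dots,m$ I would let
\[
F_i := \fix(\langle A_i\rangle) \subseteq X.
\]
Since fixed-point sets of groups of isometries of a complete \cat space are closed and convex (as noted just before Proposition \ref{bddOrbit}), the $F_i$ form a finite collection of closed convex subsets of $X$, so Corollary \ref{metric:helly} is applicable once the $(d+1)$-intersection hypothesis is verified.

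The key identity I would use is
\[
\fix\bigl(\langle A_{i_0}\cup\cdots\cup A_{i_d}\rangle\bigr) \;=\; \bigcap_{j=0}^{d} F_{i_j}.
\]
The inclusion $\subseteq$ is obvious, and $\supseteq$ holds because a point fixed by every element of each $A_{i_j}$ is fixed by every element of the subgroup they jointly generate. Combined with the hypothesis of the proposition, this identity says precisely that every $(d+1)$-member sub-collection of $\{F_1,\dots,F_m\}$ has non-empty intersection. Since $\dim(X)\le d$, Corollary \ref{metric:helly} then yields
\[
\bigcap_{i=1}^{m} F_i \;\neq\; \emptyset.
\]

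To conclude, I would note that any point $x\in\bigcap_i F_i$ is fixed by every element of $\bigcup_i A_i$, hence by the subgroup it generates, which is $\G$ by assumption. So $\fix(\G)\neq\emptyset$.

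There is no real obstacle here: the proposition is essentially a translation of Metric Helly into the language of fixed-point sets, once one observes that fixed-point sets of unions of generating sets coincide with intersections of fixed-point sets of the pieces. The only things one must be slightly careful about are that the collection $\{F_i\}$ is finite (immediate from the hypothesis that there are only finitely many $A_i$) and that one is applying Corollary \ref{metric:helly} to closed convex subsets of a complete convex metric space, which \cat spaces are (\cite{BH}, p.120). No additional input, such as the semisimplicity of the action or bounded-orbit arguments, is required.
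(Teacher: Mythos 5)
Your proof is correct and is exactly the paper's argument: the paper's proof is the one-line instruction to apply Corollary \ref{metric:helly} to the fixed-point sets of the $A_i$, and you have simply supplied the routine verification that these sets are closed and convex and that the $(d+1)$-wise intersection hypothesis translates into the hypothesis of the proposition. No further comment is needed.
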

 
 \begin{proof} Apply Corollary \ref{metric:helly} to the fixed point sets of the $A_i$.
 \end{proof}
 
\begin{cor}[$\Delta_n$ Torsion]\label{c:Dtor}
Let $\G$ be a group
generated by the union of the subsets $A_1,\dots,A_n$. Let $H_J<\G$ be the subgroup
generated by $\{A_j\mid j\in J\}$. If $H_J$ is finite whenever $|J|\le d+1$, then 
$\fd(\G)\ge d$.
\end{cor}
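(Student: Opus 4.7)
The plan is to read off the statement as a near-immediate consequence of the two results that immediately precede it in the paper: the $\Delta_n$ Criterion (Proposition \ref{p:Delta}) and the fact that finite groups fix points on complete \cat spaces (Corollary \ref{fp}).

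First I would unpack what needs to be shown. By definition of $\fd$, the inequality $\fd(\G)\ge d$ is equivalent to the assertion that $\G$ has a global fixed point whenever it acts by isometries on a complete \cat space $X$ with $\dim(X)\le d$. So I fix such a space $X$ and an isometric action of $\G$ on it, and aim to produce a fixed point.

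Next I would verify the hypotheses of the $\Delta_n$ Criterion for the generating family $A_1,\dots,A_n$. Given any sub-collection of $d+1$ indices $J=\{i_0,\dots,i_d\}\subset\{1,\dots,n\}$, the subgroup generated by the union $A_{i_0}\cup\dots\cup A_{i_d}$ is, by definition, exactly $H_J$. By hypothesis $H_J$ is finite, and by Corollary \ref{fp} every finite group acting by isometries on a complete \cat space has a fixed point; hence $\langle A_{i_0}\cup\dots\cup A_{i_d}\rangle$ fixes a point of $X$. Proposition \ref{p:Delta} then yields a global fixed point for $\G$, and since $X$ was arbitrary this gives $\fd(\G)\ge d$.

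There is no real obstacle here: this is a bookkeeping corollary that simply repackages the $\Delta_n$ Criterion in the special case where all the relevant subgroups are forced to have fixed points for the most elementary reason (finiteness). The only thing worth flagging is the off-by-one convention, namely that $\fd(\G)\ge d$ corresponds to mandating fixed points in dimensions up to and including $d$, which matches the ``$d+1$'' appearing in both the hypothesis on $|J|$ and in the statement of Proposition \ref{p:Delta}.
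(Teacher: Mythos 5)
Your proof is correct and is essentially the paper's own argument: the paper's proof of this corollary is the single line ``Finite groups of isometries always have fixed points (Corollary \ref{c:finite})'', leaving the appeal to Proposition \ref{p:Delta} implicit, which you have simply spelled out. Nothing further is needed.
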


\begin{proof}
Finite groups of isometries always have fixed points (Corollary \ref{c:finite}).
\end{proof}

\subsection*{Example: Simplices of finite groups} An {\em{$n$-simplex of groups}}
is a contravariant
functor $\mathcal S$
 from the poset of non-empty faces of an $n$-simplex,
ordered by inclusion, to the category of groups and monomorphisms;
the resulting diagram of groups is required to commute; see \cite{BH} p.377.
The {\em{fundamental group}} $\pi_1\mathcal S$ is the direct limit of the resulting diagram in the category of
groups. $\mathcal S$ is said to be {\em{gallery-connected}} if
the images of the groups associated to the codimension-one faces
together generate $\pi_1\mathcal S$.

The following special case of the $\Delta_n$-criterion was investigated by Angela Barnhill \cite{barnhill}.  
 
\begin{corollary}[Simplices of groups]\label{simplex}
If $\G$ is the fundamental group of a gallery-connected
$n$-simplex of finite groups, then $\fd(\G)\ge n-1$. 
\end{corollary}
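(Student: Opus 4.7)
The plan is to apply the $\Delta_n$-torsion criterion (Corollary \ref{c:Dtor}) with $d=n-1$. Label the $n+1$ codimension-one faces of the $n$-simplex by the vertex they omit, writing $F_j$ for the face opposite $v_j$, and let $A_j\subset\Gamma$ be the image of the finite group $\mathcal{S}(F_j)$ under the canonical map $\mathcal{S}(F_j)\to\pi_1\mathcal{S}=\Gamma$. By the very definition of gallery-connectedness, $A_0\cup\cdots\cup A_n$ generates $\Gamma$, so the $A_j$ supply the generating subsets needed to invoke Corollary \ref{c:Dtor}.

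Next I would verify that for every subset $J\subseteq\{0,\ldots,n\}$ with $|J|\le n$, the subgroup $H_J=\langle A_j:j\in J\rangle$ is finite. Since $|J|\le n$, there exists an index $i\notin J$, and the vertex $v_i$ then lies in every $F_j$ with $j\in J$, i.e.\ $\{v_i\}\subseteq F_j$. The contravariant functoriality of the simplex of groups furnishes a monomorphism $\mathcal{S}(F_j)\hookrightarrow\mathcal{S}(\{v_i\})$ for each such $j$; these monomorphisms are compatible with the universal maps to $\Gamma$, so the image of each $\mathcal{S}(F_j)$ in $\Gamma$ is contained in the image of the finite vertex group $\mathcal{S}(\{v_i\})$. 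Consequently $H_J$ itself is contained in the image of $\mathcal{S}(\{v_i\})$ in $\Gamma$, and is therefore finite.

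With these two facts in hand, Corollary \ref{c:Dtor} applied with $d=n-1$ immediately delivers $\fd(\Gamma)\ge n-1$. The only subtlety worth flagging, and the step where I would be most careful, is keeping track of the direction of the monomorphisms in the contravariant convention: the group attached to a face embeds into the groups attached to its sub-faces, so whenever a collection of codimension-one faces shares a common vertex $v$, their associated groups all sit inside $\mathcal{S}(\{v\})$ after passing to $\Gamma$. This is exactly what makes the torsion hypothesis of Corollary \ref{c:Dtor} hold, and it is the only place where the specific structure of a simplex of groups enters the argument.
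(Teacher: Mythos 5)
Your proposal is correct and follows essentially the same route as the paper: both apply the $\Delta_n$ torsion criterion (Corollary \ref{c:Dtor}) to the images of the codimension-one face groups and observe that any $n$ of them land in a common finite local group. The only cosmetic difference is that you embed $H_J$ into a common \emph{vertex} group $\mathcal{S}(\{v_i\})$ rather than into $\mathcal{S}(\tau)$ for the full intersection face $\tau=\bigcap_{j\in J}F_j$, which is immaterial since both are finite.
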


\begin{proof} Apply Corollary \ref{c:Dtor} with the groups $\mathcal S_{\sigma_i}$
associated to codimension-1 faces $\sigma_i$ in the role of the
$A_i$. If $|J|\le n$, then $H_J$ is contained in the finite group $\mathcal S_{\tau}$,
where $\tau = \cap_{i\in J}\sigma_i$.
\end{proof}

If an $n$-simplex of finite groups supports a metric
of non-positive curvature (in the
sense of \cite{BH} p.388), then its fundamental group acts properly
and cocompactly by isometries on an $n$-dimensional
\cat space with fundamental domain a single simplex.
In \cite{JS}, Januszkiewicz and
Swiatkowski constructed,  for all $n>0$, examples of {\em{hyperbolic groups}} that
arise in this way. Thus:

\begin{prop}\label{p:hyp} For every positive integer  $n$, there exist hyperbolic groups $\Lambda_n$ with $\fd(\Lambda_n)=n$.
\end{prop}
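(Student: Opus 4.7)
The plan is to sandwich $\fd(\Lambda_n)$ between matching bounds supplied by Corollary \ref{simplex} on one side and by an explicit proper cocompact action on the other, taking for $\Lambda_n$ the examples of Januszkiewicz and Swiatkowski \cite{JS}.

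First I would invoke \cite{JS} to obtain, for each $n\ge 1$, a gallery-connected $(n+1)$-simplex of finite groups $\mathcal{S}_n$ which supports a piecewise-hyperbolic metric of non-positive curvature (in the sense of \cite{BH}, p.~388) and whose fundamental group $\Lambda_n:=\pi_1\mathcal{S}_n$ is word hyperbolic. Applying Corollary \ref{simplex} to the $(n+1)$-simplex $\mathcal{S}_n$ gives the lower bound $\fd(\Lambda_n)\ge n$ immediately.

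For the matching upper bound I would use the observation recalled in the paragraph just before the proposition: non-positive curvature of $\mathcal{S}_n$ provides a proper cocompact action of $\Lambda_n$ by isometries on a complete \cat polyhedral complex $X_n$ of dimension $n+1$, namely the universal cover of $|\mathcal{S}_n|$, with a single top-dimensional simplex as fundamental domain. Because the action is proper, every point stabilizer is finite; since $\Lambda_n$ is infinite (it acts cocompactly on the non-compact space $X_n$), no point of $X_n$ can be fixed by all of $\Lambda_n$. Hence $\fd(\Lambda_n)+1\le n+1$, and combining with the previous paragraph yields $\fd(\Lambda_n)=n$.

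The only point that really needs checking beyond the citations is that \cite{JS} does produce examples in every dimension and that those examples are gallery-connected; the first is explicit in their construction, and the second holds automatically for developable simplices of groups with connected development, since then $\pi_1\mathcal{S}$ is generated by the images of its codimension-one face groups. No other technical obstacle arises.
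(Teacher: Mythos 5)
Your proof is correct and takes essentially the same route as the paper, which leaves the argument implicit in the paragraph preceding the proposition: the lower bound $\fd(\Lambda_n)\ge n$ comes from Corollary \ref{simplex} applied to the Januszkiewicz--Swiatkowski $(n+1)$-simplices of finite groups, and the upper bound from the fixed-point-free proper cocompact action of the infinite group $\Lambda_n$ on the $(n+1)$-dimensional \cat development. Your write-up simply spells out these two steps (and is right to flag that gallery-connectedness of the \cite{JS} examples is the one point that must be read off from their construction).
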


In \cite{gang6} we explain how these examples
can be used to construct an infinite, finitely generated group
that cannot act without a fixed point on any complete,
finite dimensional acyclic space.

\section{The Product Lemma and Bootstrapping}\label{s:prods} \label{s:bootstrap}

In the previous section we saw how Corollary \ref{metric:helly} led to a fixed-point criterion. In this
section we shall see how other special cases of Theorem \ref{t:helly} lead to criteria that are more widely
applicable.  

\begin{lemma}\label{p:join} Let $X$ be a complete {\rm{CAT}}$(0)$
space and let $S_1,\dots,S_\ell\subseteq \isom(X)$ be subsets
such that $[s_i,s_j]=1$ for all $s_i\in S_i, \, s_j\in S_j\ (i\neq j)$.
If $\Ne_i$ is the nerve of the family $\C_i=(\fix(s_i)\mid s_i\in S_i)$,
then the nerve $\Ne$ of $\C_1\sqcup\dots\sqcup\C_\ell$ is the join
$\Ne_1\ast\dots\ast\Ne_\ell$.
\end{lemma}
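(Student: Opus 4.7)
The plan is to verify the claim by directly matching simplices on both sides, using the commutation hypothesis to reduce the non-trivial direction to Corollary \ref{c:commutes}. I would first fix notation: regard the disjoint union $\C_1\sqcup\dots\sqcup\C_\ell$ as indexed by pairs $(i,s)$ with $s\in S_i$, so that the vertex set of $\Ne$ is the disjoint union of the vertex sets of the $\Ne_i$ (this matters only as a bookkeeping safeguard, in case an isometry happens to belong to more than one $S_i$). Under the join construction recalled in the paper, a non-empty simplex of $\Ne_1\ast\dots\ast\Ne_\ell$ is exactly a choice, for each $i$, of a (possibly empty) simplex $\sigma_i$ of $\Ne_i$, with at least one $\sigma_i$ non-empty. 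So it suffices to show: for any $T\subseteq\bigsqcup_i S_i$ with finite projections $T_i:=T\cap S_i$, the collection $\{\fix(s):s\in T\}$ has non-empty intersection if and only if each $\{\fix(s):s\in T_i\}$ does.

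The forward direction is immediate, since $\bigcap_{s\in T}\fix(s)\subseteq\bigcap_{s\in T_i}\fix(s)$ for every $i$.

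For the reverse direction, set $H_i:=\langle T_i\rangle$. The hypothesis $[s_i,s_j]=1$ for $s_i\in S_i,\, s_j\in S_j$ with $i\ne j$ upgrades at once to the statement that $H_1,\dots,H_\ell$ pairwise commute. Moreover $\fix(H_i)=\bigcap_{s\in T_i}\fix(s)$, which is non-empty by assumption. Corollary \ref{c:commutes} therefore gives
\[
\bigcap_{s\in T}\fix(s)\;=\;\bigcap_{i=1}^{\ell}\fix(H_i)\;\neq\;\emptyset,
\]
completing the proof and identifying $\Ne$ with the join $\Ne_1\ast\dots\ast\Ne_\ell$.

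There is no serious obstacle here; the only place where care is required is the bookkeeping around the disjoint-union convention for the vertex set of $\Ne$, which matters because the join construction treats the two vertex sets as formally disjoint even when they happen to coincide as subsets of $X$. With that dealt with, the lemma is essentially a repackaging of the commuting-subgroups corollary in the language of nerves.
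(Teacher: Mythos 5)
Your proposal is correct and follows essentially the same route as the paper: both reduce the only non-trivial direction (a join of simplices $\sigma_i$ from the $\Ne_i$ spans a simplex of $\Ne$) to Corollary \ref{c:commutes} applied to the pairwise commuting subgroups $H_i$ generated by the corresponding elements of the $S_i$. Your extra care with the disjoint-union indexing of the vertex set is a harmless refinement the paper leaves implicit.
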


\begin{proof} It is clear that $\Ne$ is contained in the join of
the $\Ne_i$; we must argue  that the converse is true, i.e.~for each
$\ell$-tuple  $\underline{\sigma}$ of simplices  $\sigma_i <\Ne_i\ (i=1,\dots,\ell)$, there is
a simplex in $\Ne$   that is the join of the $\sigma_i$. 
Let $H_i<\isom(X)$ be the subgroup generated by the elements  of
$S_i$ indexing the vertices of $\sigma_i$ and note that $[H_i,H_j]=1$ if $i\neq j$.  The presence of $\sigma_i$
in $\Ne_i$ is equivalent to the statement that $\fix(H_i)$ is  non-empty.
Corollary \ref{c:commutes}
then tells us $\fix(\cup_i H_i)=\cap_i\fix(H_i)$ is non-empty, as required.
\end{proof}

\begin{prop}[Bootstrap Lemma]\label{l:bootstrap}
Let $k_1,\dots,k_n$ be positive
integers and let $X$ be a complete \cat space of dimension less than $k_1+\dots+k_n$.
Let $S_1,\dots,S_n \subseteq\isom(X)$ be subsets with $[s_i,s_j]=1$
for all $s_i\in S_i$ and $s_j\in S_j\ (i\neq j)$.

If, for $i=1,\dots,n$, the subgroup generated by each $k_i$-element subset of $S_i$ has a 
fixed point in $X$, then for some $i$ every 
finite subset of $S_i$ has a common fixed point.
\end{prop}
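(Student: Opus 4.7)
The plan is to argue by contradiction, producing a sphere inside the nerve of a family of fixed-point sets that violates Theorem \ref{t:helly}. Suppose, for every $i \in \{1,\dots,n\}$, that some finite subset $T_i\subseteq S_i$ has $\bigcap_{s\in T_i}\fix(s)=\emptyset$, and choose $T_i$ minimal with this property. The hypothesis that every $k_i$-element subset of $S_i$ has a common fixed point forces $|T_i|\ge k_i+1$; minimality ensures that every proper subset of $T_i$ does have a common fixed point in $X$.

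Now consider the sub-collection $\C''=\{\fix(s) : s\in T_1\sqcup\dots\sqcup T_n\}$. Since elements of $S_i$ and $S_j$ commute for $i\neq j$, Lemma \ref{p:join} identifies $\Ne(\C'')$ with the join $\Ne_1''\ast\dots\ast\Ne_n''$, where $\Ne_i''$ is the nerve of $(\fix(s) : s\in T_i)$. By the minimality of $T_i$, the complex $\Ne_i''$ consists of exactly the proper subsets of $T_i$, so $\Ne_i''=\partial T_i$, a simplicial sphere of dimension $|T_i|-2$. The join of these spheres $\S^{|T_1|-2}\ast\dots\ast\S^{|T_n|-2}$ is itself a sphere of dimension
\[
r \ =\ \sum_{i=1}^{n}(|T_i|-2) + (n-1) \ =\ \sum_{i=1}^{n}|T_i|\ -\ n\ -\ 1\ \ge\ \sum_{i=1}^{n}k_i\ -\ 1.
\]
Since $\dim X < \sum_i k_i$ by hypothesis, this gives $r\ge\dim X$.

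Applying Theorem \ref{t:helly} to $\C''$ then tells us that every continuous map $\Ne(\C'')\to\S^r$ is null-homotopic. But $\Ne(\C'')$ is itself an $r$-sphere, so the identity map yields the desired contradiction. Hence the assumption must fail for some $i$, i.e.~every finite subset of $S_i$ has a common fixed point in $X$.

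The one step that needs care is the identification of $\Ne(\C'')$ with the simplicial sphere $\partial T_1\ast\dots\ast\partial T_n$: Lemma \ref{p:join} (which rests on Corollary \ref{c:commutes} and the commutation hypothesis) supplies the join decomposition, and the minimality of each $T_i$ fills in each factor as the boundary of a full simplex. Once this structural identification is in hand, the proof reduces to a dimension count and a single invocation of Theorem \ref{t:helly}.
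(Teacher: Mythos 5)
Your proof is correct and follows essentially the same route as the paper: take minimal bad subsets $T_i$, identify the nerve of the resulting family of fixed-point sets as a join of simplex boundaries via Lemma \ref{p:join}, and note that this join is a sphere of dimension at least $\dim X$, contradicting Theorem \ref{t:helly} applied to the identity map. The dimension count matches the paper's (with $|T_i|=k_i'+1$), so nothing further is needed.
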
 

\begin{proof} Suppose that the conclusion
of the proposition were false.
Then for $i=1,\dots,n$ there would be a smallest
integer $k_i'\ge k_i$ such that some
$(k_i'+1)$-element subset 
$T_i=\{s_{i,1},\dots, s_{i,k_i'+1}\}$ in $S_i$  did not have a fixed point.  

Since any $k_i'$
elements of $T_i$ have a common fixed point, 
the nerve of the family $\C_i=(\fix(s_{i,j})\mid j=1,\dots, k_i'+1)$
would be the boundary of a $k_i'$-simplex $\partial\Delta_{k_i'}$. Hence, by
Lemma \ref{p:join}, the nerve of  $\C_1\sqcup\dots\sqcup \C_n$  
would be the join  $\partial\Delta_{k_1'}\ast
\dots\ast\partial\Delta_{k_n'}$. But this
contradicts Theorem \ref{metric:helly}, because 
the realisation of this join
is homeomorphic to a sphere of dimension $(\sum_{i=1}^n k_i') -1\ge\dim X$.
\end{proof}

\begin{cor}[Product Lemma]\label{c:prodLemma} For $d>0$,
let $\G=\G_1\times\dots\times\G_d$.
If for $i=1,\dots,d$ the group $\G_i$ is generated
by the union of  finitely many finitely generated subgroups $H_{ij}$, and
if $\fd(H_{ij})\ge d-1$ for all $i,j$, then whenever $\G$ acts 
by isometries on a complete \cat space of dimension
less than $d$, at least one of the factors $\G_i$ has a fixed
point.
\end{cor}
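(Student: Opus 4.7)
The plan is to run the argument of the Bootstrap Lemma (Proposition \ref{l:bootstrap}) but with the finitely generated subgroups $H_{ij}$ playing the role of the individual isometries; the combinatorial skeleton of that argument should transfer without serious change.

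First I would observe that, since each $H_{ij}$ is finitely generated and $\fd(H_{ij})\ge d-1\ge\dim X$, the fixed-point set $\fix(H_{ij})$ is a non-empty closed convex subset of $X$. I would then assume for contradiction that no factor $\G_i$ has a fixed point: because $\G_i$ is generated by its subgroups $H_{ij}$, this forces $\bigcap_j\fix(H_{ij})=\emptyset$ for every $i\in\{1,\dots,d\}$. For each $i$ I would then pick a minimal sub-collection $\{H_{ij}:j\in J_i'\}$ whose fixed-point sets have empty intersection, and set $r_i:=|J_i'|-1\ge 1$; by minimality every proper sub-collection of this one does have non-empty intersection, so the nerve $\Ne_i$ of $\{\fix(H_{ij}):j\in J_i'\}$ is exactly the boundary $\partial\Delta_{r_i}\cong\S^{r_i-1}$.

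Next I would exploit the key commutativity hypothesis: for $i\neq k$ the subgroups $H_{ij}$ and $H_{k\ell}$ commute because they sit in the direct factors $\G_i$ and $\G_k$. Mirroring the proof of Lemma \ref{p:join} — with Corollary \ref{c:commutes} now applied to the subgroups $\langle H_{ij}:j\in\sigma\rangle$ in place of the cyclic subgroups generated by individual $s_i$ — I would identify the nerve of the combined family $\bigsqcup_i\{\fix(H_{ij}):j\in J_i'\}$ with the simplicial join $\partial\Delta_{r_1}\ast\cdots\ast\partial\Delta_{r_d}$, topologically a sphere $\S^s$ of dimension $s=\sum_i r_i-1\ge d-1\ge\dim X$. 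Applying Theorem \ref{t:helly} to the identity map of this nerve-sphere then delivers the contradiction, since the identity $\S^s\to\S^s$ is not null-homotopic.

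The only step needing real care — and thus the main obstacle — is verifying that Lemma \ref{p:join} extends from individual isometries to commuting finitely generated subgroups; this amounts to re-reading its proof with Corollary \ref{c:commutes} invoked for the relevant subgroups rather than for cyclic subgroups, and once this verification is in hand the argument above is literally the Bootstrap Lemma with $k_i=1$ for each $i$ and the $H_{ij}$ in the role of the elements $s_{i,j}\in S_i$.
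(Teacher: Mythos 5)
Your proof is correct and is essentially the paper's argument: the paper's entire proof is the one-line reduction ``let $S_i=\bigcup_j H_{ij}$ and take $k_i=1$'' in the Bootstrap Lemma, and what you have written is precisely that lemma's nerve-is-a-join-of-sphere-boundaries argument, unpacked with the subgroups $H_{ij}$ playing the role of the individual isometries (a harmless substitution, since Corollary \ref{c:commutes} applies to commuting subgroups just as well as to commuting elements). The only cosmetic difference is that the paper, by taking $S_i$ to be the set of all elements of the $H_{ij}$, concludes via finite generation of $\G_i$ that a common fixed point for every finite subset of $S_i$ is a fixed point for $\G_i$, whereas you work with the finitely many sets $\fix(H_{ij})$ directly and so avoid that last step.
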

 
 \begin{proof}
 Let $S_i=\bigcup_j H_{ij}$ and take $k_i=1$. 
  \end{proof}

The following special case of Corollary \ref{c:prodLemma} will be particularly useful.

\begin{cor}(= Proposition \ref{p:prodLemma}) If each of the
groups $\G_1,\dots,\G_d$ has a finite
generating set consisting of elements of 
finite order, then at least one of the $\G_i$ has a fixed point whenever 
$\G_1\times\dots\times\G_d$ acts by isometries of a complete \cat space of dimension
less than $d$.
\end{cor}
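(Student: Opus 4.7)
The plan is to deduce this immediately from the Product Lemma (Corollary \ref{c:prodLemma}) by choosing the subgroups $H_{ij}$ to be the cyclic subgroups generated by the individual torsion generators of the $\G_i$. Concretely, for each $i\in\{1,\dots,d\}$, fix a finite generating set $\{g_{i,1},\dots,g_{i,n_i}\}$ of $\G_i$ in which every $g_{i,j}$ has finite order, and set $H_{ij}:=\langle g_{i,j}\rangle$. Each $H_{ij}$ is a finite cyclic group, so it is finitely generated, and the family $\{H_{ij}\}_j$ is finite and has union generating $\G_i$. Thus the structural hypothesis of Corollary \ref{c:prodLemma} is satisfied.

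It remains to check the dimension hypothesis $\fd(H_{ij})\ge d-1$. This is where the restriction to torsion generators is used: by Corollary \ref{fp}, every finite group of isometries of a complete \cat space has a fixed point, regardless of the dimension of the space. Hence for each finite $H_{ij}$ one has $\fd(H_{ij})=\infty$, which certainly exceeds $d-1$. Applying Corollary \ref{c:prodLemma} then yields that at least one factor $\G_i$ has a fixed point whenever $\G_1\times\dots\times\G_d$ acts by isometries on a complete \cat space of dimension less than $d$. There is no real obstacle here; the only point worth being careful about is that "finitely generated by torsion" is exactly the hypothesis that lets us package the generators into the finite cyclic subgroups $H_{ij}$ needed to invoke the Product Lemma.
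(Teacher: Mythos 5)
Your proof is correct and is exactly the argument the paper intends: the statement appears there as an immediate special case of the Product Lemma (Corollary \ref{c:prodLemma}), obtained by taking the $H_{ij}$ to be the finite cyclic subgroups generated by the torsion generators, whose fixed points in every complete CAT$(0)$ space are guaranteed by Corollary \ref{fp}. Nothing to add.
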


By taking $k_i=2$ in Proposition \ref{l:bootstrap} we obtain the following criterion.
Again, the case where all of the groups $ \<A_{ij}, A_{ik}\> $ are finite is already useful.
The reader can easily formulate the analogous statements with $k_i$ taking larger
constant values.

\begin{cor}[Filling Triples]\label{p:triple} Let $\G=\G_1\times\dots\times\G_d$ where
each group $\G_i$ is generated by the union of three subsets $A_{i1}\cup A_{i2}\cup
A_{i3}$ such that $\fd \<A_{ij}, A_{ik}\> \ge 2d-1$ for  $i=1,\dots,d$ and $j\neq k$. Then, whenever $\G$ acts
by isometries on a complete \cat space of dimension less than $2d$, one of the factors $\G_i$
has a fixed point.
\end{cor}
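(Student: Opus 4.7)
The plan is to invoke the Bootstrap Lemma (Proposition~\ref{l:bootstrap}) directly, as the preamble to the corollary suggests, by setting $n=d$ and $k_i=2$ for each $i$, and taking $S_i\subseteq\isom(X)$ to be the image of $A_{i1}\cup A_{i2}\cup A_{i3}$ in the isometry group of $X$. The dimension hypothesis is built into the statement: $k_1+\cdots+k_n=2d>\dim X$. The commutation hypothesis $[s,s']=1$ for $s\in S_i$, $s'\in S_j$ with $i\neq j$ is immediate from the direct-product structure of $\G=\G_1\times\cdots\times\G_d$, since elements in distinct factors commute.

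The only real content is the verification of the two-element fixed-point hypothesis of the Bootstrap Lemma: every pair $\{s,s'\}\subseteq S_i$ must generate a subgroup with a fixed point in $X$. Given such a pair, choose $j,k\in\{1,2,3\}$ with $s\in A_{ij}$ and $s'\in A_{ik}$; if $s$ and $s'$ happen to lie in the same $A_{ij}$, pick any $k\neq j$. Then $\<s,s'\>\subseteq\<A_{ij},A_{ik}\>$, and the hypothesis $\fd\<A_{ij},A_{ik}\>\ge 2d-1$ together with $\dim X\le 2d-1$ tells us that $\<A_{ij},A_{ik}\>$ already fixes a point in $X$; that point is \emph{a fortiori} fixed by $\<s,s'\>$.

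The Bootstrap Lemma then produces an index $i\in\{1,\dots,d\}$ for which every finite subset of $S_i$ has a common fixed point. Since $\G_i$ is finitely generated (this is the standing assumption, naturally ensured by the $\fd$-hypotheses on the groups $\<A_{ij},A_{ik}\>$ in any realistic application), one can choose a finite $T\subseteq S_i$ with $\<T\>=\G_i$; the common fixed point of $T$ is then fixed by $\G_i$, which is the desired conclusion.

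I expect no serious obstacle in this argument: it is essentially a plug-and-play application of the Bootstrap Lemma with $k_i=2$, a choice calibrated so that the total dimension budget $\sum k_i = 2d$ exceeds $\dim X$ and the two-element hypothesis matches the hypothesized threshold $2d-1$ for the pairwise-generated subgroups $\<A_{ij},A_{ik}\>$. The only minor cosmetic step is the passage from ``every finite subset of $S_i$ fixes a point'' to ``$\G_i$ fixes a point,'' handled by finite generation of $\G_i$ in exactly the same way as in the proof of Corollary~\ref{c:prodLemma}.
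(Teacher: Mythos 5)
Your overall route is the paper's: the text introduces this corollary precisely as ``taking $k_i=2$ in Proposition~\ref{l:bootstrap}'', and your verification of the hypotheses --- commutation across the factors from the direct-product structure, and the two-element fixed-point condition from $\fd\<A_{ij},A_{ik}\>\ge 2d-1$ together with $\dim X\le 2d-1$ --- is exactly the intended content and is correct.

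The one genuine weak point is the final step. Proposition~\ref{l:bootstrap} only delivers, for some $i$, a common fixed point for every \emph{finite} subset of $S_i=A_{i1}\cup A_{i2}\cup A_{i3}$, and on an unbounded space this does not imply a fixed point for $\<S_i\>$ (think of isometries of a ray with $\fix(g_n)=[n,\infty)$). The corollary does not assume the $A_{ij}$ are finite, and your parenthetical claim that finite generation of $\G_i$ is ``ensured by the $\fd$-hypotheses'' is not right: $\fd(H)\ge 2d-1$ is a fixed-point property and says nothing about how $H$ is generated. The clean repair, which needs no finiteness at all, is to run the same join-of-nerves argument one level up, on the three closed convex sets $C_{ij}=\fix\<A_{ij}\>$, $j=1,2,3$, for each $i$: each is non-empty and they pairwise intersect because $\fd\<A_{ij},A_{ik}\>\ge 2d-1$; sub-collections drawn from distinct factors have non-empty intersection by Corollary~\ref{c:commutes}, so, exactly as in Lemma~\ref{p:join}, the nerve of $\{C_{ij}\}$ is the join of the $d$ nerves $\Ne_i$ of the triples. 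If no factor had $C_{i1}\cap C_{i2}\cap C_{i3}\neq\emptyset$, every $\Ne_i$ would be $\partial\Delta_2$ and the full nerve would be a $(2d-1)$-sphere, contradicting Theorem~\ref{t:helly} since $\dim X\le 2d-1$. With that substitution (or with the added hypothesis that each $A_{ij}$ is finite, which does hold in the paper's application to $\text{\rm{Aut}}(F_n)$), your argument is complete.
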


When applying the above results one has to wrestle with
the fact that the conclusion only provides a fixed point for one of the factors. 
A convenient way of gaining more control 
is to restrict attention to conjugate sets.

\begin{cor}[Conjugate Bootstrap]\label{c:conjug}\label{c:conjBS}
Let $k$ and $n$ be positive
integers and let $X$ be a complete \cat space of dimension less than $nk$.
Let $S_1,\dots,S_n$ be conjugates of a subset  
$S\subseteq\isom(X)$ with $[s_i,s_j]=1$
for all $s_i\in S_i$ and $s_j\in S_j\ (i\neq j)$.

If each $k$-element subset of $S$ has a 
fixed point in $X$, then so does each finite subset
of $S$ and of $S_1\cup\dots\cup S_n$.
\end{cor}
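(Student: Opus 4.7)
The plan is to invoke the Bootstrap Lemma (Proposition \ref{l:bootstrap}) with the uniform choice $k_1=\dots=k_n=k$, since the hypothesis $\dim X < nk$ matches $\dim X < k_1+\dots+k_n$, and the commutation hypothesis $[s_i,s_j]=1$ for $i\neq j$ is exactly what the Bootstrap Lemma requires. To feed the lemma, I first note that each $k$-element subset of every $S_i$ has a fixed point: writing $S_i=g_iSg_i^{-1}$, a $k$-element subset $T_i\subset S_i$ has the form $g_iTg_i^{-1}$ for some $k$-element $T\subset S$; if $x\in\fix(T)$, then $g_i.x\in\fix(T_i)$. The Bootstrap Lemma then produces some index $i$ such that every finite subset of $S_i$ has a common fixed point.

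The next step is to propagate this conclusion back to $S$ and out to all the other $S_j$ using conjugation. Given a finite $F\subset S$, the conjugate $g_iFg_i^{-1}$ is a finite subset of $S_i$, so has a fixed point $y$; then $g_i^{-1}.y\in\fix(F)$. Thus every finite subset of $S$ has a fixed point, and the symmetric argument shows the same for every $S_j$.

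To close the corollary, I need finite subsets of the \emph{union} $S_1\cup\dots\cup S_n$ to have common fixed points. Given a finite $F\subseteq S_1\cup\dots\cup S_n$, decompose $F=F_1\sqcup\dots\sqcup F_n$ with $F_j=F\cap S_j$. By the previous paragraph each $\langle F_j\rangle$ has a fixed point, and the subgroups $\langle F_1\rangle,\dots,\langle F_n\rangle$ pairwise commute by the standing hypothesis on the $S_j$. Corollary \ref{c:commutes} then furnishes a point in $\bigcap_j\fix(\langle F_j\rangle)=\fix(\langle F\rangle)$, completing the argument.

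There is no serious obstacle beyond careful bookkeeping; the only thing to watch is that the Bootstrap Lemma delivers a fixed point for just one $S_i$, and one must exploit the conjugate symmetry to upgrade this to all of the $S_j$ before combining them via the commuting-subgroups corollary.
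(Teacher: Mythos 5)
Your proposal is correct and follows the same route as the paper: apply the Bootstrap Lemma with $k_1=\dots=k_n=k$, transfer the resulting fixed points between the conjugate sets, and combine via Corollary \ref{c:commutes}. The paper's proof is a one-line version of exactly this argument; you have merely made explicit the conjugation transfer and the decomposition of a finite subset of the union (where the $F_j$ need only cover $F$, not be disjoint, but this changes nothing).
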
     

\begin{proof} Proposition \ref{l:bootstrap} tells us that $S$ has a fixed point, and from Corollary
\ref{c:commutes} it follows that $S_1\cup\dots\cup S_n$ does too.
\end{proof}

\subsection{Wreath products}

We remind the reader that the (restricted) {\em{wreath product}}
$B\wr T$  is the semidirect product $(\oplus_{t\in T}B_t)
\rtimes T$, where there are fixed isomorphisms $B\cong B_t$  and the action of $T$
permutes the indices $t$ by left multiplication. {\em{Permutational wreath products}}
$B\wr_\rho T$
are defined similarly but with arbitrary index sets $I$ and a prescribed action
$\rho: T\to \sym(I)$.

We write $C_n$ to denote the cyclic group of order $n$.
\begin{cor} If $\G$ is generated by the union of 
finitely many subgroups $H_j$ with $\fd(H_j)\ge d-1$, then
$\fd(\G\wr C_d)\ge d-1$.
\end{cor}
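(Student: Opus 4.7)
\smallskip

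The plan is to let $\G \wr C_d$ act by isometries on a complete CAT$(0)$ space $X$ with $\dim X \le d-1$ and produce a global fixed point. Write $B = \G_1 \times \cdots \times \G_d$ for the base subgroup, with each $\G_i \cong \G$ and $C_d$ permuting the factors cyclically. Because $B$ has index $d$ in $\G \wr C_d$, Lemma \ref{l:fi} reduces the task to showing $\fix(B) \neq \emptyset$, so from here on I focus on producing a point fixed by every factor.

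For each $i$ let $H_{ij} \subset \G_i$ be the image of $H_j$ under the canonical identification $\G \cong \G_i$, so that $\G_i$ is generated by $\bigcup_j H_{ij}$ and $\fd(H_{ij}) = \fd(H_j) \ge d-1 \ge \dim X$. In particular $\fix(H_{ij}) \neq \emptyset$ for every $i,j$. I would then apply Corollary \ref{c:prodLemma} (the Product Lemma) to the action of $B$ on $X$: its hypotheses are met (the generating subgroups of each factor have $\fd \ge d-1$, and $\dim X < d$), so the conclusion delivers at least one index $i_0$ with $\fix(\G_{i_0}) \neq \emptyset$.

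The key structural step is then to upgrade this single fixed factor to all of them using the wreath symmetry. Let $t$ be a generator of $C_d$; it acts on $X$ as an isometry and satisfies $t\, \G_i\, t^{-1} = \G_{i+1}$ (indices mod $d$). Consequently, if $x \in \fix(\G_{i_0})$, then $t^{k-i_0}\!\cdot x \in \fix(\G_k)$ for every $k$, so $\fix(\G_k) \neq \emptyset$ for all $k = 1, \ldots, d$. The factors $\G_k$ pairwise commute inside $\isom(X)$, so Corollary \ref{c:commutes} gives
\[
\fix(B) \;=\; \bigcap_{k=1}^{d} \fix(\G_k) \;\neq\; \emptyset,
\]
and a second appeal to Lemma \ref{l:fi} yields $\fix(\G \wr C_d) \neq \emptyset$, proving $\fd(\G \wr C_d) \ge d-1$.

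There is no real obstacle in this argument: it is a short composition of the Product Lemma, a conjugation observation, and the finite-index lemma. The only conceptual point worth flagging is the one that makes the wreath hypothesis essential — the Product Lemma on its own only produces a fixed point for \emph{some} factor, and it is precisely the transitive action of $C_d$ on the factors that promotes this to a fixed point for every factor and hence for $B$.
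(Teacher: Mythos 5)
Your proof is correct and is essentially the paper's own argument: the paper simply invokes the Conjugate Bootstrap (Corollary \ref{c:conjBS}) with $k=1$ and $n=d$, whose proof consists of exactly your three steps (a bootstrap/Product-Lemma application giving one factor a fixed point, conjugation by the cyclic generator to transport that fixed point to every factor, and Corollary \ref{c:commutes} to intersect the fixed-point sets), followed by the same finite-index promotion via Corollary \ref{fixFI}. You have merely unpacked the cited corollary rather than quoting it.
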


\begin{proof}  By applying Corollary \ref{c:conjBS} with $k=1$ and $n=d$, we see that  
$\fd(\oplus_{t\in C_d} \G)\ge d-1$, and Corollary \ref{fixFI} promotes this to $\G\wr C_d$.
\end{proof}
  
Again, we emphasise the case where $\G$ is torsion-generated.

\begin{cor}\label{c:propA+}
 If $\G$ is generated by a finite set of elements of finite order, then $\fd(\G\wr C_{d})\ge d-1$.
\end{cor}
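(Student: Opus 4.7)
\emph{Proof plan.} The plan is to reduce immediately to the preceding corollary by choosing the subgroups $H_j$ to be the cyclic subgroups generated by the given torsion generators of $\G$. Let $g_1,\dots,g_m$ be a finite generating set for $\G$ consisting of elements of finite order, and set $H_j:=\langle g_j\rangle$; each $H_j$ is a finite cyclic group. By Corollary \ref{fp}, every isometric action of a finite group on a complete \cat space has a fixed point, so the inequality $\fd(H_j)\ge d-1$ holds automatically (indeed $\fd(H_j)=\infty$) for every $d$. The union $\bigcup_j H_j$ contains $\{g_1,\dots,g_m\}$ and therefore generates $\G$. The hypotheses of the preceding corollary are thus satisfied, and it delivers $\fd(\G\wr C_d)\ge d-1$.

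There is no real obstacle to address: the content of the statement is already packaged in the preceding corollary, and the torsion hypothesis is precisely the mechanism that produces, for free, the required generating family of subgroups with $\fd\ge d-1$, via Corollary \ref{fp}. For a self-contained alternative that avoids citing the preceding corollary, the skeleton would run as follows. Pass to the finite-index subgroup $\bigoplus_{t\in C_d}\G\triangleleft \G\wr C_d$ via Lemma \ref{l:fi}; let $S\subset\G\wr C_d$ be the image of $\{g_1,\dots,g_m\}$ in the first direct summand, and let $S_1,\dots,S_d$ be the $C_d$-translates of $S$, one per summand. Then $[s_i,s_j]=1$ whenever $s_i\in S_i,\, s_j\in S_j$ with $i\neq j$, since the $S_i$ lie in distinct direct factors, and every singleton in $S$ generates a finite subgroup and so fixes a point by Corollary \ref{fp}. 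Applying Corollary \ref{c:conjBS} with $k=1$ and $n=d$ (the dimension hypothesis $\dim X<d=nk$ being exactly the setting in which we want to force a fixed point) yields a common fixed point for $S_1\cup\dots\cup S_d$, which generates $\bigoplus_{t\in C_d}\G$, completing the argument.
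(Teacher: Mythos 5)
Your proposal is correct and matches the paper exactly: the paper presents this corollary as the immediate special case of the preceding one in which the $H_j$ are the finite cyclic subgroups generated by the torsion generators (so $\fd(H_j)\ge d-1$ holds for free by Corollary \ref{fp}), and your self-contained alternative simply unwinds the paper's proof of that preceding corollary, namely Corollary \ref{c:conjBS} with $k=1$, $n=d$ applied to the $C_d$-translates of the generating set in $\bigoplus_{t\in C_d}\G$, followed by promotion to the finite-index overgroup.
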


The same argument applies to permutational wreath products.

\begin{cor} Let $d>0$ be an integer,  $G$   a finite
group, and  $\rho:G\to \sym(d)$  a transitive
permutation representation.
If $\G$ is generated by the union of 
finitely many subgroups $H_j$ with $\fd(H_j)\ge d-1$
then $\fd(\G\wr_\rho G)\ge d-1$.
\end{cor}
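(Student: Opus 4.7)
The plan is to mimic the proof given for the regular wreath product $\G\wr C_d$, with the transitivity of $\rho$ playing the role that is automatic in the cyclic case. First I would realise $\G\wr_\rho G$ as $B\rtimes G$ with base $B = \bigoplus_{i=1}^d \G_i$, where $G$ permutes the $d$ factors via $\rho$; denote by $H_{j,i}$ the copy of $H_j$ inside $\G_i$ and set $S:=\bigcup_j H_j\subseteq\G_1$, viewed as a subset of $\G\wr_\rho G$. Since $\rho$ is transitive, for each $i$ I can choose $g_i\in G$ with $\rho(g_i)(1)=i$, and conjugation by $g_i$ sends $S$ onto $S_i:=\bigcup_j H_{j,i}\subseteq\G_i$. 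Thus $S_1,\dots,S_d$ are pairwise conjugates of $S$ in $\G\wr_\rho G$, and because distinct $\G_i$ lie in commuting direct summands of $B$, we have $[s_i,s_{i'}]=1$ for all $s_i\in S_i$ and $s_{i'}\in S_{i'}$ with $i\neq i'$.

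Given any action of $\G\wr_\rho G$ by isometries on a complete \cat space $X$ with $\dim X<d$, the hypothesis of the Conjugate Bootstrap (Corollary \ref{c:conjBS}) with $k=1$ and $n=d$ is immediate: every $s\in S$ lies in some $H_j$, which fixes a point of $X$ because $\fd(H_j)\ge d-1\ge\dim X$, and therefore so does $s$. The Conjugate Bootstrap then produces a common fixed point for every finite subset of $S_1\cup\dots\cup S_d$. Taking each $H_j$ finitely generated (the tacit convention of the preceding corollary), $S$ and hence $S_1\cup\dots\cup S_d$ is finite, so the conclusion supplies a fixed point for $B=\<S_1\cup\dots\cup S_d\>$; since $B$ has finite index in $\G\wr_\rho G$, Lemma \ref{l:fi} then promotes this to a fixed point for the full wreath product.

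The only genuinely new ingredient beyond the $C_d$ case is the use of transitivity of $\rho$ to realise $S_1,\dots,S_d$ as conjugates of a common $S$; after that, the Conjugate Bootstrap and Lemma \ref{l:fi} close the argument identically. If one preferred to dispense with the tacit finite-generation convention, the same conclusion could be drawn from Lemma \ref{p:join} and Theorem \ref{t:helly} applied directly to the finite family $\{\fix(H_{j,i})\}_{j,i}$: were no $\G_i$ to have a fixed point, each nerve $\Ne_i$ of $\{\fix(H_{j,i})\}_j$ would contain an empty $r_i$-simplex $V_i$ with $r_i\ge 1$, and by Lemma \ref{p:join} the subcomplex of the full nerve spanned by $V_1\cup\dots\cup V_d$ would be the join $\partial\Delta_{r_1}\ast\dots\ast\partial\Delta_{r_d}\cong\S^{\sum r_i-1}$, a sphere of dimension $\ge d-1\ge\dim X$, contradicting Theorem \ref{t:helly}.
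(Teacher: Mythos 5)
Your argument is essentially the paper's: the corollary is proved there by the remark that ``the same argument applies'' as for $\G\wr C_d$, i.e.\ the Conjugate Bootstrap (Corollary \ref{c:conjBS}) with $k=1$, $n=d$ applied to the $d$ sets sitting in the factors of the base group, followed by the finite-index promotion (Lemma \ref{l:fi} / Corollary \ref{fixFI}); the transitivity of $\rho$ is used exactly as you use it, to make $S_1,\dots,S_d$ conjugates of a single $S$. One small slip: $S=\bigcup_j H_j$ is a union of subgroups, so it is not finite merely because the $H_j$ are finitely generated. This costs you nothing, since the Conjugate Bootstrap already delivers a common fixed point for every \emph{finite} subset of $S_1\cup\dots\cup S_d$; applying that to the finite subset formed by the union of finite generating sets of all the $H_{j,i}$ gives a point fixed by all of $B$ (equivalently, take $S$ to be the union of finite generating sets of the $H_j$ from the outset --- each singleton still has a fixed point because it lies in some $H_j$ with $\fd(H_j)\ge d-1\ge\dim X$). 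Your alternative argument via Lemma \ref{p:join} and empty simplices is also correct, but it is in effect a re-derivation of the Bootstrap Lemma in this special case rather than a genuinely different route.
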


\subsection*{Bieberbach groups}

\begin{proposition}\label{p:bieb}  There exist groups $(\G_n)_{n\in\N}$
such that $\G_n$ acts properly and cocompactly by
isometries on $\E^n$ but cannot act without a fixed
point on any complete \cat space of dimension less than $n$.
\end{proposition}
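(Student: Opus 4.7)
The plan is to take $\G_n=\tilde{A}_n$, the affine Weyl group of type $\tilde{A}_n$, which acts properly and cocompactly on $\E^n$ as a Euclidean reflection group with an $n$-simplex $\Delta$ as strict fundamental domain. (Any other irreducible affine Coxeter group of rank $n$ would serve equally well.)

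The key observation is that $\tilde{A}_n$ is naturally the fundamental group of a gallery-connected $n$-simplex of finite groups $\mathcal{S}$, obtained by assigning to each face $\sigma$ of $\Delta$ its pointwise stabilizer in $\tilde{A}_n$. Each such stabilizer is a finite parabolic subgroup, generated by the subset of the $n+1$ canonical simple reflections that fix $\sigma$; the inclusion $\sigma\subset\tau$ induces, contravariantly, an inclusion $\mathcal{S}_\tau\hookrightarrow\mathcal{S}_\sigma$, and standard Coxeter-theoretic facts identify the direct limit $\pi_1\mathcal{S}$ with $\tilde{A}_n$ itself. The stabilizers of the codimension-one faces of $\Delta$ are the cyclic groups $\langle s_i\rangle\cong C_2$ generated by the simple reflections $s_0,\dots,s_n$, and these generate $\tilde{A}_n$ by definition, so $\mathcal{S}$ is gallery-connected.

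With this description in hand, Corollary \ref{simplex} immediately yields $\fd(\tilde{A}_n)\ge n-1$, so $\tilde{A}_n$ has a fixed point whenever it acts by isometries on a complete \cat space of dimension less than $n$. On the other hand, $\tilde{A}_n$ is infinite and acts properly on $\E^n$, so it has no global fixed point there, confirming that $\G_n=\tilde{A}_n$ satisfies all the required properties.

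There is no serious obstacle: once one notices that affine Coxeter groups are the right input for Corollary \ref{simplex}, the proof is essentially immediate. The only point requiring verification is the identification of $\tilde{A}_n$ with $\pi_1\mathcal{S}$ and the finiteness of face stabilizers, both of which follow from the standard theory of (affine) Coxeter groups.
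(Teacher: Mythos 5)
Your argument is correct, but it reaches the lower bound by a genuinely different route from the paper. The paper takes $\G_n=P_n\rtimes\sym(n)$, where $P_n\cong D_\infty\times\dots\times D_\infty$ is generated by the reflections in the sides of a cube in $\E^n$ (alternatively $D_\infty\wr C_n$), and gets $\fd(\G_n)\ge n-1$ from the wreath-product corollaries of the Bootstrap Lemma (Corollary \ref{c:propA+} and its permutational variant), i.e.\ ultimately from the ``join of spheres'' instance of Theorem \ref{t:helly} applied to $n$ commuting torsion-generated factors. You instead take the irreducible affine Coxeter group $\tilde A_n$ and invoke Corollary \ref{simplex}, i.e.\ the ``metric Helly / no empty simplices'' instance, applied to the gallery-connected $n$-simplex of finite parabolic subgroups coming from the fundamental alcove; this is exactly the mechanism behind Proposition \ref{p:hyp} and is the Coxeter-group setting studied in Barnhill's paper cited here. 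Both examples are Euclidean crystallographic groups and both bounds are sharp, so the two proofs buy the same thing; yours has the mild advantage of producing a single standard, irreducible reflection group, at the cost of the two Coxeter-theoretic verifications you correctly identify: that every proper standard parabolic of $\tilde A_n$ is finite (remove a node from the cyclic extended diagram), and that $\pi_1\mathcal S\cong\tilde A_n$, which follows either from developability over the simply connected space $\E^n$ or, more elementarily, from the fact that the Coxeter presentation has all relators supported on at most two generators, so the group is already the colimit of the sub-diagram of rank-$\le 2$ parabolics.
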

  
\begin{proof} Let  $P_n$ be the  group generated
by reflections in the sides of a cube in  $\E^n$ and let the symmetric group
$\sym(n)$ permute the
coordinate directions of the cube. Then  $P_n$ is a 
direct product of $n$ infinite dihedral groups $D_\infty$ and
 $\G_n = P_n\rtimes \sym(n)$ is a group of the type
described in the preceding corollary. (Alternatively, one could take $D_\infty\wr C_n$.) 
\end{proof}

\section{A general scheme: Ample Duplication}\label{s:ample}

For the convenience of the reader, we recall from the  introduction
the statement of the Ample Duplication Criterion. Note that the 
definition   refers to a generating set for  a {\em subgroup} $\Lambda<\G$
and all conjugates are taken in the ambient group $\G$.
There are no implicit constraints on the function $f(k)$ in this definition,
but the reader may want to keep the duplication functions from Proposition \ref{p:ample} or Theorem \ref{t:mcg} in mind as examples.

\begin{definition}\label{d:ample}
Given a group $\G$ and positive
integers $d$ and $k_0$, we say that a finite generating set
$\A$ for a subgroup $\Lambda<\G$ has {\em{ample duplication
for dimension $d$, with base $k_0$,}} if there is a function  
$f:\mathbb{N}\to \mathbb{N}$ such that
the following conditions both hold:
\begin{enumerate}
\item  Each subset $S\subseteq\A$ of cardinality $|S|> k_0$ can either be written
as a disjoint union $S=S_1\sqcup S_2$ where the $S_i$ are non-empty
and $\langle S_1\rangle$ normalizes $\langle S_2\rangle$,
or else there are at least $f(|S|)$ commuting conjugates of 
$\langle S\rangle$ in $\G$;
\item $d < (k-1)\,f(k)$ for $k=k_0+1,\dots,\min\{d+1,\, |\A|\}$. 
\end{enumerate}
When these conditions hold, $f(n)$ is said to be an {\em ample duplication function}.
\end{definition}

\begin{theorem}[Ample Duplication Criterion]\label{t:scheme} 
Let $\G$ be a group acting by isometries on a complete \cat space $X$ of dimension
at most $d$, and let $\Lambda<\G$ be a subgroup with
a finite generating set $\A$ that has ample duplication
for dimension $d$, with base $k_0$. If  $\<S\>$ has a fixed point for every $S\subseteq\A$ with $|S|\le k_0$,
then $\Lambda$ has a fixed point in $X$.
\end{theorem}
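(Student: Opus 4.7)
The plan is to induct on $k=|S|$ to show that $\langle S\rangle$ has a fixed point in $X$ for every $S\subseteq\A$ with $|S|\le d+1$. Once this is achieved, the $\Delta_n$ Criterion (Proposition \ref{p:Delta}) applied to the family of singletons $\{\{a\}:a\in\A\}$ immediately produces a fixed point for $\Lambda=\langle\A\rangle$. The base case $k\le k_0$ is exactly the hypothesis of the theorem.

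For the inductive step with $k_0<k\le d+1$, we consult the alternative supplied by the ample duplication property. In the first alternative, $S=S_1\sqcup S_2$ with $\langle S_1\rangle$ normalizing $\langle S_2\rangle$; since $|S_i|<k$, the inductive hypothesis gives $\fix(\langle S_i\rangle)\ne\emptyset$ for $i=1,2$, and Proposition \ref{p:norm} then yields a common fixed point of the two subgroups, which is necessarily fixed by $\langle S\rangle=\langle S_1\cup S_2\rangle$. In the second alternative, there exist $N:=f(k)$ pairwise-commuting conjugates $g_j\langle S\rangle g_j^{-1}$ in $\G$ ($j=1,\dots,N$), generated by the sets $T_j:=g_jSg_j^{-1}$. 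Each $(k-1)$-element subset of $T_j$ has the form $g_jS'g_j^{-1}$ for some $S'\subsetneq S$ with $|S'|=k-1$; the inductive hypothesis provides $x\in\fix(\langle S'\rangle)$, so $g_j.x$ is fixed by $g_j\langle S'\rangle g_j^{-1}$. Condition (2) in the definition of ample duplication forces $\dim X\le d<(k-1)f(k)=N(k-1)$, so the Conjugate Bootstrap (Corollary \ref{c:conjBS}) applied with parameters $k-1$ and $N$ produces a common fixed point of $T_1$, hence of $g_1\langle S\rangle g_1^{-1}$ and therefore of $\langle S\rangle$.

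I expect the main subtlety to be the dimension bookkeeping in Case 2: to invoke Corollary \ref{c:conjBS} one needs $\dim X<(k-1)f(k)$, which is precisely the content of condition (2) for $k\in\{k_0+1,\dots,\min\{d+1,|\A|\}\}$. Since the induction only concerns $S\subseteq\A$ with $|S|\le d+1$, the required $k$ always falls inside this range, so the criterion is tight in the sense that no slack remains in the hypothesis. Beyond this, the argument is a clean strong-induction that dovetails the two tools already assembled (Proposition \ref{p:norm} and Corollary \ref{c:conjBS}) against the combinatorial dichotomy encoded by the duplication function.
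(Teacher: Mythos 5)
Your proof is correct and follows essentially the same route as the paper's: strong induction on $|S|$, with the normalizing-decomposition case handled by Proposition \ref{p:norm} and the commuting-conjugates case handled by the Conjugate Bootstrap via condition (2). The only cosmetic difference is that you cap the induction at $|S|=d+1$ and invoke the $\Delta_n$ Criterion at the end, whereas the paper runs the induction over all of $\A$ and falls back on Proposition \ref{p:Delta} precisely when $k\ge d+2$; these are the same argument.
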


\begin{proof} We shall argue by induction on
$|S|$ to show that $\langle S\rangle$ has a fixed
point for every $S\subseteq\A$. We have assumed this is true for $|S|\le k_0$. 

Suppose now that $|S|=k>k_0$ and that smaller subsets of $\A$
all have common fixed points. If $S=S_1\sqcup S_2$, as in
condition (1), then  the common fixed points of 
 $\langle S_1\rangle$
and $\langle S_2\rangle$ provided by Proposition \ref{p:norm} are fixed points for $\langle S\rangle$.
If not, then condition (1) provides
 $l:=f(k)$ commuting conjugates of $\langle S\rangle$,
 say $\Sigma_1,\dots,\Sigma_l$, and the Conjugate Bootstrap (Corollary \ref{c:conjBS})
 provides a fixed point for $\<S\>$ provided the inequality $(k-1)\, f(k) > d$ holds, which
 it does by condition (2), unless $k\ge d+2$, in which
 case our inductive hypothesis tells us that every $(d+1)$-element
 subset of  $\A$ has a fixed point in $X$, and Proposition \ref{p:Delta} applies (with the $A_i$ as singletons).
 \end{proof}

\subsection{Mapping Class Groups} Building on classical work of Max Dehn, Raymond Lickorish \cite{ray} proved that the mapping class group $\modg$ of 
a closed orientable surface of genus $g\ge 2$ is generated by the Dehn twists in $3g-1$ 
simple closed curves. In \cite{mb:dim} the following proposition is proved via
a lengthy analysis of the subsurfaces supporting  subsets of these generators.

\begin{theorem}{\cite{mb:dim}}\label{t:mcg}
The Lickorish generators of the mapping class
group $\modg$ have ample duplication for dimension $g-1$, with base $1$
and duplication function
\begin{equation*}
f(k) =
\begin{cases} \lfloor 2g/k\rfloor & \text{$k$ even,}
\\
\lfloor 2(g-1)/(k-1)\rfloor  &\text{$k$ odd.}
\end{cases}
\end{equation*}
 \end{theorem}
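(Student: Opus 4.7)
The plan is to organize the analysis around the subsurface $\Sigma_S\subset\Sigma_g$ that supports a subset $S\subseteq \A$ of Lickorish generators. Here $\A$ consists of the Dehn twists $T_{\gamma_1},\dots,T_{\gamma_{3g-1}}$ in Lickorish's explicit collection of simple closed curves (a chain running through the $g$ handles, together with the standard handle loops and connecting curves), with consecutive curves meeting transversely in one point. For any $S$, let $\Sigma_S$ be a regular neighborhood in $\Sigma_g$ of $\bigcup_{T_\gamma\in S}\gamma$; then $\langle S\rangle$ is supported on $\Sigma_S$.

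The disconnected case is immediate: if $\Sigma_S = \Sigma_S^{(1)}\sqcup\Sigma_S^{(2)}$, partition $S = S_1\sqcup S_2$ according to which component supports each twist. Elements of $\langle S_1\rangle$ and $\langle S_2\rangle$ have disjoint support and therefore commute, so $\langle S_1\rangle$ normalizes $\langle S_2\rangle$, realizing the first alternative of Definition \ref{d:ample}.

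Assume, then, that $\Sigma_S$ is connected, and let $(h,b)$ denote its genus and number of boundary components. The principal topological lemma, requiring a case analysis over the possible sub-configurations of chain, handle, and connecting curves, asserts that when $k := |S|$ is even one has $(h,b) = (k/2, 1)$ with separating boundary, while when $k$ is odd one has $(h,b) = ((k-1)/2, 2)$. The parity dichotomy reflects the fact that a regular neighborhood of a connected configuration of $k$ Lickorish curves meeting transversely has Euler characteristic $1-k$ and closes up to a subsurface with one boundary when $k$ is even and two when $k$ is odd. Establishing this uniformly across all connected subsets of the Lickorish chain is the principal technical obstacle of the proof and is carried out in full in \cite{mb:dim}; the subtlety arises because subconfigurations can branch through handle loops as well as extend along the main chain.

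Granted the census, the duplication count follows from Euler-characteristic packing. Two conjugates $g_1\langle S\rangle g_1^{-1}$ and $g_2\langle S\rangle g_2^{-1}$ commute whenever $g_1(\Sigma_S)$ and $g_2(\Sigma_S)$ can be isotoped to be disjoint, and since $\modg$ acts transitively on embeddings of a connected surface of each topological type, the number of pairwise commuting conjugates is bounded below by the maximum number of disjoint embedded copies of a surface of type $(h,b)$ in $\Sigma_g$. For even $k$ the boundary of $\Sigma_S$ is separating, so the complement of $n$ disjoint copies is a connected surface with $n$ boundary circles and genus $g - nk/2$; non-negativity of this genus gives $n\le 2g/k$, which yields $f(k)=\lfloor 2g/k\rfloor$. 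For odd $k$, every component of the complement has at least one essential boundary and hence non-positive Euler characteristic, so the sum $\chi(\text{complement}) = 2-2g+n(k-1)$ must be $\le 0$; this gives $n\le 2(g-1)/(k-1)$ and $f(k) = \lfloor 2(g-1)/(k-1)\rfloor$. Finally, condition (2) of Definition \ref{d:ample} with $d = g-1$ reduces to checking $g-1 < (k-1)f(k)$ for $2\le k\le g$; both formulas give $(k-1)f(k)\ge g$ throughout this range, so the inequality is immediate.
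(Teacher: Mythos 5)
First, note that the paper does not prove this theorem: it is imported verbatim from \cite{mb:dim}, which is described as carrying out ``a lengthy analysis of the subsurfaces supporting subsets of these generators''; the only thing the paper asks the reader to check is condition (2) of Definition \ref{d:ample}. Your outline follows the right strategy (the same one attributed to \cite{mb:dim}, and the same one used for Proposition \ref{p:B_m} and Proposition \ref{p:ample}), and your verification of condition (2) is essentially correct, but as a proof it has two genuine gaps. The first is that you defer ``the principal technical obstacle'' --- the census of topological types of $\Sigma_S$ for connected $S$ --- to the very reference being proved, so nothing new is established. Worse, the census you state is false as a uniform claim: it is correct for \emph{linear} chains of $k$ curves (genus $\lfloor k/2\rfloor$ with one or two boundary circles according to parity), but the Lickorish intersection pattern is a caterpillar, not a path, and a connected subset can branch. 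For example the star $\{\alpha_i,\gamma_{i-1},\beta_i,\gamma_i\}$ has $k=4$ and $\chi=-3$, but its regular neighbourhood is not a genus-$2$ surface with one separating boundary circle; the equation $2h+b=k+1$ admits several solutions and the branched configurations realise more than one of them. Handling these cases is exactly what makes the argument in \cite{mb:dim} lengthy, and it is why the duplication function has the slightly awkward two-case form rather than the cleaner $\lfloor m/(k+1)\rfloor$ of Propositions \ref{p:B_m} and \ref{p:ample}.

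The second gap is a direction error in the packing step. Ample duplication requires a \emph{lower} bound: you must exhibit at least $f(k)$ pairwise disjoint (hence commuting) conjugate copies of $\langle S\rangle$, i.e.\ actually construct $f(k)$ disjoint embedded subsurfaces of the right type, each related to $\Sigma_S$ by a mapping class (which, by change of coordinates, also requires matching the topology of the complements). Your Euler-characteristic computations ($n\le 2g/k$, resp.\ $n\le 2(g-1)/(k-1)$) are \emph{upper} bounds on how many copies can fit; they show the function $f$ cannot be improved, not that it is achieved. The construction of the disjoint copies (translating the configuration along the handles, as in the proof of Proposition \ref{p:ample}) is straightforward for linear chains but again needs care for the branched types. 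Finally, your last sentence is too quick: $(k-1)f(k)\ge g$ does hold for $2\le k\le g$, but for even $k$ this requires the convexity estimate $\lfloor 2g/k\rfloor(k-1)\ge 2g-\lfloor 2g/k\rfloor-(2g\bmod k)\ge g$, with equality threatened at both ends of the range --- the ``elementary but instructive exercise'' the paper alludes to, not an immediate inequality.
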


The theorem includes the assertion that the displayed function $f(n)$
satisfies the second condition in the definition of ample duplication. 
This is an elementary but  instructive exercise.

It is proved in \cite{mb:bill} that if $g\ge 3$, then Dehn twists
have fixed points whenever $\modg$ acts by semsimple isometries on a complete
${\rm{CAT}}(0)$ space, and special considerations apply for $g=2$. This allows
one to deduce the following consequence of the above theorem.

\begin{corollary}\cite{mb:dim}\label{c:mcg} Whenever $\modg$ acts by semisimple isometries on a
complete \cat space of dimension less than $g$, it fixes a point.
\end{corollary}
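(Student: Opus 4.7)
The plan is to apply the Ample Duplication Criterion (Theorem \ref{t:scheme}) directly, taking $\G = \Lambda = \modg$, dimension parameter $d = g-1$, base $k_0 = 1$, and $\A$ the set of $3g-1$ Lickorish Dehn twists. Theorem \ref{t:mcg}, imported from \cite{mb:dim}, already guarantees that $\A$ has ample duplication for dimension $g-1$ with base $1$ and duplication function $f(k)$ as displayed; this takes care of both clauses of Definition \ref{d:ample}, so no further combinatorial work on the subgroup lattice of $\modg$ is needed at this stage.

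What remains to verify is the base hypothesis of Theorem \ref{t:scheme}: for every $S\subseteq\A$ with $|S|\le 1$, the subgroup $\langle S\rangle$ must fix a point of $X$. The empty case is vacuous, so the real content is that each single Lickorish generator -- that is, each chosen Dehn twist $\tau$ -- acts elliptically on $X$. This is precisely the result cited from \cite{mb:bill}, valid for $g\ge 3$: Dehn twists fix points in every semisimple action of $\modg$ on a complete \cat space. Feeding this base case into the Ample Duplication Criterion then produces a fixed point for $\modg$ in $X$ whenever $\dim X \le g-1$, establishing the corollary for $g \ge 3$.

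The genuinely hard step is external to this section: the ellipticity of Dehn twists under semisimple actions, imported from \cite{mb:bill}, is what forces the semisimplicity hypothesis into the statement; without it a Dehn twist could in principle be parabolic, and the inductive base of Theorem \ref{t:scheme} would collapse. The case $g = 2$ lies outside the reach of this argument in its present form -- dimension less than $2$ restricts $X$ to an $\mathbb R$-tree or a point -- and must be handled by the separate low-dimensional considerations alluded to above, combining known property-$\FR$ results for $\text{Mod}_2$ with direct analysis of isometric actions on $\mathbb R$-trees.
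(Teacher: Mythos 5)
Your proposal is correct and follows exactly the route the paper intends: feed the ellipticity of Dehn twists from \cite{mb:bill} (valid for $g\ge 3$) into the Ample Duplication Criterion with $d=g-1$, $k_0=1$, and the Lickorish generating set, whose ample duplication is supplied by Theorem \ref{t:mcg}. Your remarks on the role of the semisimplicity hypothesis and the exceptional status of $g=2$ match the paper's own caveats, so there is nothing to add.
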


\subsection{Braid Groups and Base Variation} The braid group on $m$ strings has the well known
presentation
$$
B_m = \< \sigma_1,\dots,\sigma_{m-1} \mid
\sigma_i\sigma_{i+1}\sigma_i=\sigma_{i+1}\sigma_i\sigma_{i+1},
\ [\sigma_i,\sigma_j]= 1 \text{ if $|i-j|>1$}\>.
$$
It is natural to define $\sigma_m =\s_1\cdots\s_{m-2}\s_{m-1}\s_{m-2}^{-1}\cdots\s_1^{-1}$; the relation
$[\s_i,\s_j]=1$ if $|i-j|\neq 1 \mod m$ then holds. In
the picture of the braid group as the mapping class group of an $m$-punctured disc, including $\s_m$ corresponds to arranging the
punctures in a circle rather than a line.

The braid group on at most $7$ strings acts properly and cocompactly by isometries on a polyhedral
 CAT$(0)$ space \cite{brady}, \cite{HKS}, \cite{braid7}; it is unknown if braid groups
on more strings have similar actions. The following proposition
is not sharp but we include it because it concerns groups of great geometric interest
and because its proof provides a faithful illustration of how one uses
the Ample Duplication Criterion. It also serves as a warm-up for the more complicated arguments
that apply to $\aut$. A feature of particular note   is 
 that as one increases the level of the base
from which ample duplication is required, the dimension of the spaces for which
one obtains fixed points rises accordingly.

In the course of the proof we shall need the following
technical lemma (cf.~\cite{mb:dim} Lemma 4.2).

\begin{lemma}\label{l:count} For positive integers $n$ and $k$, define $g_n(k) = (k-1) \lfloor n/(k+1)\rfloor$. Then,
\begin{enumerate}
\item $g_n(2) \le g_n(k)$ for  $2\le k \le n-1$ with equality
if and only if $(n,k)\in \{(6,3),\, (7,3),\, (9,4)\}$.
\item $g_n(3)\le g_n(k)+1$  for $3\le k \le n-1$ with equality
if and only if $k$ is even and $n\in \{2k,\, 2k+1\}$. 
\end{enumerate}
\end{lemma}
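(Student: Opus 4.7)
The plan is a Euclidean-division case analysis. For any $(n,k)$ with $3 \le k \le n-1$, write $n = (k+1)q + r$ with $q = \lfloor n/(k+1)\rfloor \ge 1$ and $0 \le r \le k$, so that $g_n(k) = (k-1)q$; both parts then reduce to inequalities on $r$ in terms of $q$ and $k$. The $k = 2$ case in part (1) is immediate since both sides equal $\lfloor n/3\rfloor$.

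For part (1), the condition $(k-1)q \ge \lfloor n/3\rfloor$ rewrites as $n < 3(k-1)q + 3$, i.e.\ $r \le q(2k-4) + 2$, with equality $g_n(2) = g_n(k)$ requiring $q(2k-4) \le r \le q(2k-4) + 2$. Since $(k-2)/(2k-4) = 1/2$, for every $q \ge 1$ and $k \ge 3$ one has $k \le q(2k-4) + 2$, so $r \le k$ yields $g_n(2) \le g_n(k)$ unconditionally. Equality further requires $q(2k-4) \le k$, i.e.\ $q \le k/(2k-4)$; enumerating, $k = 3, q = 1, r \in \{2,3\}$ gives $n \in \{6,7\}$; $k = 4, q = 1, r = 4$ gives $n = 9$; and for $k \ge 5$ no solution exists.

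For part (2), write $n = 4a + b$ with $0 \le b \le 3$, so $g_n(3) = (n-b)/2$. The inequality $g_n(3) \le g_n(k) + 1$ becomes $n \le 2(k-1)q + 2 + b$, i.e.\ $r \le (k-3)q + 2 + b$. For $k = 3$ we have $q = a$, $r = b$ and the statement reads $g_n(3) \le g_n(3) + 1$, trivially true. For $k \ge 4$ and $b \ge 1$, $r \le k \le (k-3)q + 2 + b$ is immediate; for $b = 0$, the bound $(k-3)q \ge k-2$ holds for $q \ge 2$, and for $q = 1$ the range $n \in \{k+1, \dots, 2k+1\}$ together with $b = 0$ excludes the odd value $n = 2k+1$, leaving $n \le 2k$, which is the needed inequality.

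Equality $g_n(3) = g_n(k) + 1$ forces $r = (k-3)q + 2 + b$, hence $n = 2(k-1)q + 2 + b$, and the congruence $n \equiv b \pmod 4$ reduces to $(k-1)q$ being odd, i.e.\ $k$ even and $q$ odd. The constraint $r \le k$ then reads $(k-3)q \le k - 2 - b$; for $q \ge 3$ this forces $2k \le 7 - b \le 7$, incompatible with $k \ge 4$, so $q = 1$ and $b \in \{0, 1\}$, giving $n = 2k + b \in \{2k, 2k+1\}$. The main obstacle is simply the bookkeeping of remainders and residues; no deep idea intervenes, but one must track the congruence $n \equiv b \pmod 4$ carefully to isolate precisely the exceptional equality cases.
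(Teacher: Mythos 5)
Your proof is correct. The paper itself gives no argument for this lemma (it is dismissed as an ``entirely elementary but instructive exercise'' and accompanied only by a table of values), so there is nothing to compare against beyond checking your computation, and it checks out: writing $n=(k+1)q+r$, the inequality $\lfloor n/3\rfloor\le(k-1)q$ does translate to $r\le(2k-4)q+2$, which holds since $r\le k\le 2k-2\le (2k-4)q+2$ for $k\ge 3$, and your enumeration of the equality window $(2k-4)q\le r\le(2k-4)q+2$ under $r\le k$ correctly isolates $(6,3)$, $(7,3)$, $(9,4)$. In part (2) the reduction of $n\equiv b\pmod 4$ to ``$(k-1)q$ odd'' is the one step that genuinely needs care, and you handle it correctly, as well as the $q=1$, $b=0$ subcase where parity rules out $n=2k+1$. (One pedantic remark: as with the lemma's own statement, the ``equality iff'' in part (1) must be read as excluding the tautological case $k=2$, which you do implicitly by treating $k=2$ separately.)
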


The proof of this lemma is entirely elementary, but it is instructive, as is the 
plot of small values of $g_n(k)$ (table below).
The pain caused by the fact that $g_n(k)$ is not a monotone function of $k$ is a hallmark of
many proofs in this area.  The circled numbers highlight the failures of monotonicity; there
will be a total of $\frac{1}{2}(k-1)(k-2)-1$ circled entries in column $k$.

\begin{table}[ht]
\begin{center} 
\caption{Values $g_n(k)$ with failures of monotonicity circled. }
\begin{tabular}{r|rrrrrrrrrrrrrrrrrrr}
  \hline 
  & $k=2$ &  3 & 4 & 5 & 6 & 7 & 8 & 9 & 10 & 11 & 12 & 13 & 14 & 15 & 16  \\
  \hline
  $n=3$    & 1 & 0 & 0 & 0 & 0 & 0 & 0 & 0 & 0 & 0 & 0 & 0 &  0 & 0 & 0  \\
  4   & 1 & 2 & 0 & 0 & 0 & 0 & 0 & 0 & 0 & 0 & 0 & 0 & 0 & 0 &  0 \\
  5   & 1 & 2 & 3 & 0 & 0 & 0 & 0 & 0 & 0 & 0 & 0 & 0 & 0 & 0 &  0 \\ 
  6   & 2 & 2 & 3 & 4 & 0 & 0 & 0 & 0 & 0 & 0 & 0 & 0 & 0 & 0 &  0  \\ 
  7   & 2 & 2 & 3 &  4 &  5 & 0 & 0 & 0 & 0 & 0 & 0 & 0 & 0 & 0 &  0  \\
  8   & 2 & 4 & {\textcircled {3}} & 4 & 5 & 6  & 0 & 0 & 0 & 0 & 0 & 0 & 0 & 0 &  0 \\
  9   & 3 & 4 &  {\textcircled {3}} & 4 & 5 & 6  & 7 & 0 & 0 & 0 & 0 & 0 & 0 & 0 &  0 \\ 
  10  & 3 & 4 &  6 & {\textcircled {4}} & 5 & 6  & 7 &  8 & 0 & 0 & 0 & 0 & 0 & 0 &  0 \\ 
  11  & 3 & 4 &  6 & {\textcircled {4}} & 5 & 6  & 7 &  8 & 9 & 0 & 0 & 0 & 0 & 0 &  0   \\
  12  & 4 & 6 & 6 & 8 & {\textcircled {5}} & 6  & 7 &  8 & 9 & 10 & 0 & 0 & 0 & 0 &  0  \\
  13  & 4 & 6 & 6 & 8 & {\textcircled {5}} & 6  & 7 &  8 & 9 & 10 & 11 & 0 & 0 & 0 &  0 \\ 
  14  & 4 & 6 & 6 & 8 & 10 & {\textcircled {6}} & 7 &  8 & 9 & 10 & 11 & 12 & 0 & 0 &  0  \\ 
  15  & 5 & 6 & 9 & {\textcircled {8}} & 10 & {\textcircled {6}}  & 7 &  8 & 9 & 10 & 11 & 12 & 13 & 0 &  0    \\ 
  16  & 5 & 8 & 9 & {\textcircled {8}} & 10 & 12  & {\textcircled {7}} &  8 & 9 & 10 & 11 & 12 & 13 & 14 &  0  \\ 
  17  & 5 & 8 & 9 & {\textcircled {8}} & 10 & 12  & {\textcircled {7}} &  8 & 9 & 10 & 11 & 12 & 13 & 14 &  15  \\ 
   \hline
\end{tabular}
\end{center}
\end{table}

\begin{prop}\label{p:B_m} The generating set $\underline\sigma=\{\s_1,\dots,\s_m\}$
 for $B_m$ has ample duplication for dimension
$\lfloor m/3\rfloor-1$ with base $1$ and duplication function
$$
f_m(k) = \lfloor m/(k+1)\rfloor.
$$
The same duplication function is ample for dimension $2\lfloor m/4\rfloor-2$ with base $2$.
It is also ample for dimension $2\lfloor m/4\rfloor-1$ with base $2$ if $m\equiv 2$ or $3 \mod 4$.
\end{prop}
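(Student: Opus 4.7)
The plan is to verify the two conditions of Definition~\ref{d:ample} for the generating set $\underline\sigma$, taking $\Lambda = \G = B_m$. The key observation is that $\sigma_i$ and $\sigma_j$ commute precisely when $|i-j|\not\equiv 1 \pmod m$, so $\underline\sigma$ may be identified with the vertices of the cyclic graph $C_m$ in which edges encode non-commutation. Viewing $B_m$ as the mapping class group of a closed disc with $m$ punctures arranged in a circle makes the cyclic symmetry of these relations transparent; in particular, any two arcs of $k$ consecutive generators generate $B_m$-conjugate copies of $B_{k+1}$ (the rotation moving one configuration of punctures to the other provides the conjugating element).

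For condition (1), decompose $S\subseteq\underline\sigma$ into its maximal arcs of consecutive indices around $C_m$. If $S$ consists of at least two arcs, partition them into non-empty sub-collections giving $S=S_1\sqcup S_2$; generators of $S_1$ are non-adjacent to those of $S_2$, so $[\langle S_1\rangle,\langle S_2\rangle]=1$ and $\langle S_1\rangle$ normalizes $\langle S_2\rangle$. If $S$ is a single arc of length $k$, pack $n := \lfloor m/(k+1)\rfloor$ disjoint arcs of length $k$ around the cycle, each separated from the next by at least one gap; this is feasible since $n(k+1)\le m$. The subgroups generated by these arcs are all $B_m$-conjugate to $\langle S\rangle$, and they pairwise commute because generators belonging to distinct arcs are separated by a gap and hence non-adjacent in $C_m$. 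This provides the required $f_m(k)$ commuting conjugates (the clause is vacuous when $f_m(k)=0$).

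For condition (2) I translate into the notation of Lemma~\ref{l:count}: $(k-1)f_m(k) = g_m(k)$. With base $k_0=1$ and $d=\lfloor m/3\rfloor-1$, Lemma~\ref{l:count}(1) yields $g_m(k)\ge g_m(2)=\lfloor m/3\rfloor > d$ for every $k$ in $[2,d+1]$. With base $k_0=2$, the critical value is $g_m(3)=2\lfloor m/4\rfloor$. Taking $d=2\lfloor m/4\rfloor-2$, Lemma~\ref{l:count}(2) gives $g_m(k)\ge g_m(3)-1 > d$ for all $k\ge 3$ in range. To raise $d$ to $2\lfloor m/4\rfloor-1$, I need the strict inequality $g_m(k)\ge g_m(3)$ throughout $[3,d+1]$, i.e.~no exceptional pair ($k$ even, $m\in\{2k,2k+1\}$) lies in that range. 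A short parity check shows the offenders force $k=m/2$ with $m\equiv 0\pmod 4$ or $k=(m-1)/2$ with $m\equiv 1\pmod 4$; hence for $m\equiv 2$ or $3\pmod 4$ no such exception occurs and the improved dimension holds.

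The principal technical nuisance is the non-monotonicity of $g_m(k)$ displayed in the table preceding the proposition: for the base-$2$ statement one must pin down exactly which offending values of $k$ fall in $[3,d+1]$, and the gain of one dimension for $m\equiv 2,3\pmod 4$ rests on showing they fall outside it. Everything else is essentially combinatorial, being determined by the commutation pattern of the $\sigma_i$ on $C_m$, with Lemma~\ref{l:count} packaging the numerical content.
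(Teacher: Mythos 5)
Your proposal is correct and follows essentially the same route as the paper: condition (1) via the arc/block decomposition of $S$ (the paper conjugates by a power of $\sigma_1\cdots\sigma_{m-1}$ to reduce the cyclic picture to linear intervals and then packs $\lfloor m/(k+1)\rfloor$ disjoint blocks of $k+1$ strings, which is your packing of arcs around $C_m$), and condition (2) by the identity $(k-1)f_m(k)=g_m(k)$ together with Lemma~\ref{l:count}, including the same parity analysis isolating the exceptional even $k$ with $m\in\{2k,2k+1\}$ that obstructs the improved dimension when $m\equiv 0,1\pmod 4$.
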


\begin{proof} Given a proper subset $S_I=\{\sigma_i\mid i\in I\}$ of $\underline\s$ with $|I|=k\ge 2$,
we may conjugate by a power of $\s_1\cdots\s_{m-1}$ to assume that $\s_m\not\in I$. Then,
either $I$ can be written as a disjoint union $I_1\sqcup I_2$ with $\max I_1 < \min I_2-1$, 
or else $I$ is an interval $[i_0,i_0+k-1]\cap \N$. In the first case, $\<S_{I_1}\>$
commutes with $\<S_{I_2}\>$, and in the second case we have $f_m(k):=\lfloor m/(k+1)\rfloor$
commuting conjugates of $\<S_I\>$ in $B_m$,  
namely the subgroups generated by
$J_0,\dots, J_{f(k)-1}$ where $J_{r}=\<\sigma_{r(k+1)+1},\dots,\sigma_{r(k+1)+k}\>$.

To see why this is true, note that we are dealing with the case where, in the standard
braid-diagram representation of the braid group,  $\<S_I\>$ is supported on a block $k+1$
strings; in $J_i$, we have translated the support to the block of strings beginning with string $(k+1)i +1$,
and successive blocks sit next to each other (but do not overlap, ensuring that the $J_i$ commute).
The number of disjoint blocks that we can fit in is $\lfloor m/(k+1)\rfloor$, which is $f_m(k)$.

For the assertion in the first sentence of the proposition,
we take $k_0=1$ in the Ample Duplication Criterion and are required to prove that $ \lfloor m/3\rfloor \le (k-1) f_m(k)$
for $k=2,\dots,\lfloor m/3\rfloor$; equivalently, $g_m(2)\le g_m(k)$.
This is covered by Lemma \ref{l:count}(1).

For the second assertion, we take $k_0=2$ and the required bound is
$ 2 \lfloor m/4\rfloor-1 \le g_m(k)$, which is covered by Lemma \ref{l:count}(2). 

For the third assertion, we need the inequality $g_m(3) - 1< g_m(k)$ for
$k=3,\dots,2\lfloor m/4\rfloor$. This is valid for $m\le 7$ but if $m=8$ or $9$ then $k=4$ causes a problem.
For $m=10$ or $11$, there is no problem up to $k=4=2\lfloor m/4\rfloor$, so the equality is valid;
likewise it is valid whenever $m\equiv 2$ or $3 \mod 4$. But
for $m=12$ or $13$ the inequality fails when $k=6$, and in general it fails when $m$ is congruent to $0$
or $1 \mod 4$ and $k = 2\lfloor m/4\rfloor$. 
\end{proof}

\begin{corollary}\label{c:braid}
Suppose that $B_m$ acts by isometries on a complete \cat space $X$.
\begin{enumerate}
\item If each generator $\sigma_i$ has a fixed point and 
$\dim X< \lfloor m/3\rfloor$, then $B_m$ has a fixed point.
\item If one of the subgroups $\<\sigma_i,\sigma_{i+1}\>\cong B_3$
 has a fixed point and $\dim X < 2\lfloor m/4\rfloor-\delta_m$, then $B_m$ has a fixed point,
 where $\delta_m = 0$  if $m\equiv 2$ or $3 \mod 4$, and  $\delta_m = 1$  if $m\equiv 0$ or $1\mod 4$.
\end{enumerate}
\end{corollary}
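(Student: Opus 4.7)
The plan is to apply the Ample Duplication Criterion (Theorem~\ref{t:scheme}) directly, with $\Lambda=\G=B_m$ and generating set $\underline{\sigma}=\{\sigma_1,\dots,\sigma_m\}$, using the ample-duplication data already furnished by Proposition~\ref{p:B_m}. That proposition has already discharged the arithmetic obligations on the duplication function $f_m(k)=\lfloor m/(k+1)\rfloor$, so the only remaining task is to verify the base-level hypothesis of Theorem~\ref{t:scheme}: for each $S\subseteq\underline{\sigma}$ with $|S|\le k_0$, the subgroup $\<S\>$ fixes a point of $X$.

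For part (1), where $k_0=1$, I need each singleton $\{\sigma_i\}$ to have a fixed point. The hypothesis supplies this for $\sigma_1,\dots,\sigma_{m-1}$, and the identity $\sigma_m=(\sigma_1\cdots\sigma_{m-2})\sigma_{m-1}(\sigma_1\cdots\sigma_{m-2})^{-1}$ displays $\sigma_m$ as a conjugate of $\sigma_{m-1}$, so translating a fixed point of $\sigma_{m-1}$ by the conjugating element gives a fixed point of $\sigma_m$. The criterion then delivers a global fixed point in every dimension below $\lfloor m/3\rfloor$.

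For part (2), where $k_0=2$, the singleton case is handled exactly as above; indeed all elements of $\underline\sigma$ are pairwise conjugate, so a single fixed point suffices for every $\<\sigma_i\>$. For a pair $\{\sigma_i,\sigma_j\}\subseteq\underline\sigma$ there are two cases. If $|i-j|\not\equiv\pm 1\pmod m$, then $\sigma_i$ and $\sigma_j$ commute, and Corollary~\ref{c:commutes} supplies a common fixed point. If $|i-j|\equiv\pm 1\pmod m$, then $\<\sigma_i,\sigma_j\>\cong B_3$; the hypothesis furnishes a fixed point for one such adjacent pair, and conjugation by an appropriate power of the cyclic-shift element $\sigma_1\sigma_2\cdots\sigma_{m-1}$ (the same element invoked in the proof of Proposition~\ref{p:B_m} to rotate the index set modulo $m$) transports this fixed point so that every adjacent pair has one. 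Feeding these verifications into Theorem~\ref{t:scheme} together with the dimension bounds of Proposition~\ref{p:B_m} yields a $B_m$-fixed point whenever $\dim X<2\lfloor m/4\rfloor-\delta_m$.

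There is no genuine obstacle beyond keeping the cyclic indexing straight: one must remember that $\sigma_m$ and the pair $\<\sigma_m,\sigma_1\>$ are to be treated on the same footing as the standard generators and standard adjacent pairs, a parity ensured by the cyclic action of $\sigma_1\cdots\sigma_{m-1}$ on $\underline\sigma$. All of the delicate counting --- the verification that $(k-1)f_m(k)>d$ across the relevant range of $k$, including the $m\bmod 4$ case analysis responsible for $\delta_m$ --- has already been absorbed into Proposition~\ref{p:B_m} via Lemma~\ref{l:count}, so the corollary itself is essentially a book-keeping consequence of that proposition and Theorem~\ref{t:scheme}.
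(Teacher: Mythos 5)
Your proposal is correct and is exactly the intended derivation: the paper states Corollary \ref{c:braid} as an immediate consequence of Proposition \ref{p:B_m} together with the Ample Duplication Criterion (Theorem \ref{t:scheme}), and your verification of the base-level hypotheses (conjugacy of all the $\sigma_i$, commutation of non-adjacent pairs, conjugacy of all adjacent pairs via the cyclic shift) is precisely the book-keeping that the paper leaves to the reader. The translation of ``ample for dimension $D$'' into ``$\dim X < D+1$'' and the resulting $\delta_m$ case split also match the statement exactly.
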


\section{Generators and subgroups in the automorphism groups of free groups}

Our most serious application of the Ample Duplication Criterion is to automorphism groups of free groups. Before
proceeding to this, we need to recall some basic facts about how these groups can be generated.

We fix a basis $\{x_1,\dots,x_n\}$ for the free group of rank $n$.
If $n\ge 3$, then \autn has a unique subgroup of index $2$, which is denoted by $\sautn$.
J.~Nielsen \cite{nielsen} proved that $\sautn$ is generated by the  {\em Nielsen automorphisms}
$\l_{ij}$ and $\rho_{ij}$, which are defined as follows: $$\l_{ij}(x_i)=x_jx_i\  \  \hbox{and} \  \   
\l_{ij}(x_k)=x_k \  \  \hbox{if}\  \  k\neq i$$
and $$\r_{ij}(x_i)=x_ix_j\  \  \hbox{and} \  \   
\r_{ij}(x_k)=x_k \  \  \hbox{if}\  \  k\neq i.$$ 
To generate the whole of \autn one can add one of the automorphisms $\e_i$, where
$$\e_i(x_i)=x_i^{-1} \  \  \hbox{and} \  \    \e_i(x_k)=x_k  \  \  \hbox{if}\  \  k\neq i.$$ 

By making repeated use of the relations\footnote{for the left action of ${\rm{Aut}}(F_n)$ with the
commutator convention $[\alpha, \beta] = \alpha^{-1}\beta^{-1}\alpha\beta$.}
$[\l_{jk},\l_{ij}]=\l_{ik}$, one sees that the
generators $\l_{ik}$ with $|i-k|\neq 1\mod n$ are
unnecessary. Likewise, one can dispense
with the generators $\rho_{ik}$  with $|i-k|\neq 1\mod n$. Thus we arrive at:

\begin{lemma}\label{l:NiGen}
${\rm{SAut}}(F_n)$ is generated by the union of the $n$
sets  (indices
$\mod n$)
$$\Ni_i=\{\l_{i,i-1},\,\rho_{i,i-1}\}.$$
\end{lemma}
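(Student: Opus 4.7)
The plan is to invoke Nielsen's theorem and then whittle his generating set down to the cyclic-nearest-neighbour subset described by the lemma. Nielsen already supplies $\sautn = \<\{\lambda_{ij}, \rho_{ij} \mid i \ne j\}\>$, so it suffices to prove that every $\lambda_{ij}$ and every $\rho_{ij}$ lies in the subgroup $H := \<\bigcup_{i \in \Z/n} \Ni_i\>$.

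First I would verify the displayed commutator identity $[\lambda_{jk}, \lambda_{ij}] = \lambda_{ik}$ for any three pairwise-distinct indices $i, j, k$, by evaluating both sides on each basis element $x_\ell$ under the paper's convention $[\alpha,\beta] = \alpha\inv\beta\inv\alpha\beta$. The only non-trivial case is $\ell = i$, where a short computation returns $x_k x_i = \lambda_{ik}(x_i)$ on both sides; every other basis element is fixed by each of $\lambda_{ij}^{\pm 1}$ and $\lambda_{jk}^{\pm 1}$. An analogous direct check yields $[\rho_{jk}, \rho_{ij}] = \rho_{ik}$.

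With these identities in hand, I would argue by induction on the cyclic distance $d(i,k) := (i - k) \bmod n \in \{1,\dots,n-1\}$ that $\lambda_{ik} \in H$. The base case $d = 1$ is $k = i-1$, so $\lambda_{ik} \in \Ni_i$ by definition. For $d \ge 2$, take the middle index $j = i - 1$; the condition $d \ge 2$ is precisely what ensures $k \neq i-1$, so $i, i-1, k$ are three distinct residues mod $n$, and the commutator relation gives
\[
\lambda_{ik} \;=\; [\lambda_{i-1,\,k},\; \lambda_{i,\,i-1}].
\]
The factor $\lambda_{i, i-1}$ lies in $\Ni_i$, and $\lambda_{i-1,k}$ has cyclic distance $d(i-1,k) = d - 1$, hence lies in $H$ by induction. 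The identical scheme with $\rho$ replacing $\lambda$ gives $\rho_{ik} \in H$ for all $i \ne k$, completing the proof.

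There is no genuine obstacle; the only point requiring care is the distinctness of $i, i-1, k$ modulo $n$, which is automatic for $d \ge 2$, and the mild mod-$n$ bookkeeping that lets the induction close up across the cycle (so that, e.g., $\lambda_{1,2}$ for $n = 4$ is reached from $\lambda_{4,2}$ and $\lambda_{1,4}$, rather than being stuck as a "wrong direction" generator). In effect, the proof simply formalises the parenthetical remark immediately preceding the lemma.
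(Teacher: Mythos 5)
Your argument is correct and is essentially the paper's own: the paper's entire justification is the remark preceding the lemma that repeated use of $[\lambda_{jk},\lambda_{ij}]=\lambda_{ik}$ renders the generators with $|i-k|\neq 1 \bmod n$ redundant, and your induction on cyclic distance with middle index $j=i-1$ is the right way to formalise this (and correctly shows that the $n$ one-directional generators $\lambda_{i,i-1}$ suffice, closing up around the cycle). One tiny slip: when verifying the commutator identity, the case of the basis element $x_j$ is not covered by your claim that ``every other basis element is fixed by each of $\lambda_{ij}^{\pm1}$ and $\lambda_{jk}^{\pm1}$'' --- $\lambda_{jk}$ moves $x_j$ --- but the check $x_j\mapsto x_j\mapsto x_kx_j\mapsto x_kx_j\mapsto x_j$ is immediate, so the identity and the proof stand.
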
 

The bounds established in the following proposition 
will be superceded in the next section, but we include them here because the
arguments are so much easier.

\begin{prop}\label{p:ample} The Nielsen generators for $\sautn$ have ample duplication for dimension
$\lfloor n/3\rfloor-1$ with base $1$ and duplication function
$$
h_n(k) = \lfloor {n}/{(k+1)}\rfloor.
$$
The same duplication function is ample for dimension $2\lfloor n/4\rfloor-1$ with base $2$,
if  $m\equiv 2$ or $3 \mod 4$,
and is ample for dimension $2\lfloor m/4\rfloor-2$ with base $2$, if $m\equiv 0$ or $1 \mod 4$.
\end{prop}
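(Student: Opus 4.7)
The strategy is to transcribe the proof of Proposition~\ref{p:B_m} to the cyclic configuration of Nielsen pairs. Three observations drive the argument: each $\Ni_i=\{\l_{i,i-1},\r_{i,i-1}\}$ is supported on the consecutive pair of basis vectors $\{x_{i-1},x_i\}$; two such subgroups $\langle\Ni_i\rangle$ and $\langle\Ni_j\rangle$ commute precisely when $|i-j|\notin\{0,1\}\pmod n$; and the basis rotation $\tau:x_i\mapsto x_{i+1\bmod n}$ is an element of $\aut$ which conjugates $\Ni_i$ onto $\Ni_{i+1}$, providing a cyclic symmetry inside the ambient group $\G=\aut$.

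For $S\subseteq\bigcup_i\Ni_i$ of cardinality $k\ge 2$, I would introduce its \emph{shadow} $I=\{i\in\mathbb Z/n : S\cap\Ni_i\neq\emptyset\}$; then $|I|\le k$, and in the range of $k$ that matters for the Ample Duplication Criterion (namely $k\le d+1$) a direct comparison shows $|I|<n$. Picking any $i_0\notin I$, I regard $I$ as a subset of the linearly ordered set $\mathbb Z/n\smallsetminus\{i_0\}$. If $I$ fails to be a single consecutive interval there, I split it at any gap of length $\ge 2$ as $I=I_1\sqcup I_2$ and write $S=S_1\sqcup S_2$ correspondingly; the subgroups $\langle S_1\rangle$ and $\langle S_2\rangle$ then have disjoint variable supports, so they commute, which furnishes the split option of condition~(1) in Definition~\ref{d:ample}. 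Otherwise $I$ is a linear interval of some length $\ell\le k$, $\langle S\rangle$ is supported on a block of $\ell+1$ consecutive variables in $\mathbb Z/n$, and conjugation by $\tau^{j(\ell+1)}$ for $j=0,1,\dots,\lfloor n/(\ell+1)\rfloor-1$ translates that block to pairwise-disjoint blocks of the same size in $\mathbb Z/n$. This yields $\lfloor n/(\ell+1)\rfloor\ge\lfloor n/(k+1)\rfloor=h_n(k)$ pairwise-commuting conjugates of $\langle S\rangle$ in $\G=\aut$, as required.

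It remains to verify the numerical condition~(2) of Definition~\ref{d:ample}. Writing $g_n(k)=(k-1)\,h_n(k)$ as in Lemma~\ref{l:count}, the base-$1$ assertion with $d=\lfloor n/3\rfloor-1$ reduces to $g_n(2)\le g_n(k)$ for $k=2,\dots,\lfloor n/3\rfloor$, which is exactly Lemma~\ref{l:count}(1). For the base-$2$ assertions, since $g_n(3)=2\lfloor n/4\rfloor$, the target $d=2\lfloor n/4\rfloor-1$ requires $g_n(3)\le g_n(k)$ throughout $k=3,\dots,2\lfloor n/4\rfloor$; Lemma~\ref{l:count}(2) gives $g_n(3)\le g_n(k)+1$, with equality only when $k$ is even and $n\in\{2k,2k+1\}$. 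A short parity check shows that such an exceptional $k$ lies in the range exactly when $n\equiv 0$ or $1\pmod 4$ (taking $k=n/2$ or $(n-1)/2$ respectively), producing the dichotomy: $d=2\lfloor n/4\rfloor-1$ works for $n\equiv 2,3\pmod 4$, whereas one must settle for $d=2\lfloor n/4\rfloor-2$ when $n\equiv 0,1\pmod 4$.

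The geometric heart of the argument---shadow decomposition followed by sliding the support of $\langle S\rangle$ around the cyclic symmetry---is a direct transcription of the braid-group proof and should go through smoothly. The hardest part I anticipate is the parity bookkeeping in the last step, forced by the non-monotonicity of $g_n(k)$ already tabulated after Lemma~\ref{l:count}; it is this phenomenon that splits the second and third assertions according to the residue of $n\bmod 4$.
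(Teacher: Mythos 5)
Your argument is correct and is essentially the paper's own proof: the same dichotomy (either split $S$ along a gap in its index support, giving commuting $\langle S_1\rangle,\langle S_2\rangle$, or translate the interval of support by powers of a basis rotation to obtain $\lfloor n/(k+1)\rfloor$ commuting conjugates in the ambient group), followed by the same appeal to Lemma~\ref{l:count} for condition (2). The paper delegates the numerical bookkeeping to the proof of Proposition~\ref{p:B_m}; you carry it out explicitly, with the same mod-$4$ case analysis for the base-$2$ assertions.
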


\begin{proof} Given a set $S$ of $k$ Nielsen transformations $\nu_{ij}$, with $2\le k\le \lfloor n/3\rfloor$,
we may conjugate by a suitable permutation of the basis $\{x_1,\dots,x_n\}$ to assume that
 $\{i,j\}\neq\{1,n\}$ for $\nu_{ij}\in S$.
If the set of indices $i,j$ arising in $S$ breaks into two disjoint, non-empty sets, then we get a decomposition
$S=S_1\sqcup S_2$ such that $\<S_1\>$ commutes with $\<S_2\>$. If not, then
the indices form an interval of length at most $k+1$.
In this case, $\sautn$ contains  $h_n(k)=\lfloor n/(k+1)\rfloor$
commuting conjugates of $\<S\>$, namely the conjugates of $\<S\>$ by the
permutations of $\{x_1,\dots,x_n\}$ that shift the indices (mod $n$) $x_i\mapsto x_{i+r(k+1)}$
for $r=0,\dots, h_n(k)-1$.  From this point, the proof of Proposition \ref{p:B_m} applies. 
\end{proof}

\subsection{Generating \autn by torsion elements}

The following result was used in \cite{mb:zies} to give a short proof of 
the fact that if $n\ge 3$ then $\aut$ has property ${\rm{F}}\R$.

\begin{prop}\label{p:A123} For $n\ge 3$, there exist sets $A_1,A_2,A_3\subset\aut$
such that $\langle A_i,\, A_j\rangle$ is finite for $i,j=1,2,3$
but $A_1\cup A_2\cup A_3$ generates $\aut$.  If $n\ge 4$ then  $\saut$ satisfies the same condition. 
\end{prop}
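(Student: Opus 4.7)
The goal is to exhibit three finite subgroups of $\aut$ whose pairwise joins remain finite but whose triple join is all of $\aut$; the sets $A_i$ will then be generating sets for these finite subgroups. Note that each $A_i$ must itself generate a finite group (since $\langle A_i\rangle\subseteq\langle A_i,A_i\rangle$), so we are genuinely looking for a configuration of three finite subgroups.

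The plan is to exploit two naturally-occurring large finite subgroups of $\aut$:
\begin{itemize}
\item $W_n \cong (\Z/2)^n\rtimes \sym(n)$, the group of signed permutations, generated by the $\varepsilon_i$ and the basis permutations;
\item $\Sigma \cong \sym(n+1)$, obtained from the permutation action of $\sym(n+1)$ on the augmented symbol set $\{x_0,x_1,\dots,x_n\}$ subject to $x_0x_1\cdots x_n=1$ (equivalently, the automorphisms of the rank-$n$ rose with an added ``imaginary'' edge).
\end{itemize}
I would first check the classical fact that $\langle W_n,\Sigma\rangle=\aut$ for $n\ge 3$: a Nielsen generator such as $\rho_{ij}$ can be recovered as a product of an appropriate transposition $(0,k)\in\Sigma$ with a suitable element of $W_n$, and these suffice together with $W_n$ to generate $\aut$. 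The common intersection is $W_n\cap \Sigma=\sym(n)$.

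With this in hand, I would take $A_1$ to be a generating set for $\sym(n)$, take $A_2\subset W_n\setminus \sym(n)$ so that $\langle A_1,A_2\rangle =W_n$, and take $A_3\subset \Sigma\setminus \sym(n)$ so that $\langle A_1,A_3\rangle=\Sigma$. Two of the three pair conditions then hold automatically, and the union generates $\aut$ by construction. The hard part is arranging the third pair $\langle A_2,A_3\rangle$ to be finite: a careless choice such as $A_2=\{\varepsilon_1\}$ and $A_3=\{(0,1)\}$ produces $\varepsilon_1\cdot (0,1)=\prod_{i\ge 2}\lambda_{i,1}$, a product of Nielsen transformations of infinite order, so $\langle A_2,A_3\rangle$ is infinite dihedral. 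The resolution is to choose $A_2$ and $A_3$ so that $A_2\cup A_3$ lies inside a third finite subgroup of $\aut$, either by picking involutions that commute with one another (exploiting the description of centralizers of transpositions in $\Sigma$ as $\langle(0,k)\rangle\times\sym(\{1,\dots,n\}\setminus\{k\})$), or by enlarging the sets so that the generated subgroup stabilises a common graph structure on a rose with a few extra features. The main obstacle is precisely this third finite subgroup: the finite subgroups of $\aut$ are well-understood and constrained, and showing that $A_2$ and $A_3$ jointly sit inside one requires a careful explicit choice and a short verification on the abelianisation $\Z^n$ to rule out hidden unipotent parts.

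For $\saut$ one runs the same argument but working inside $\saut$ throughout: the individual $\varepsilon_i$ are not available, but $\varepsilon_i\varepsilon_j\in \saut$ plays a similar role and the alternating subgroup $A_{n+1}=\Sigma\cap \saut$ replaces $\Sigma$. The hypothesis $n\ge 4$ ensures that $A_{n+1}$ together with the even-sign subgroup of $W_n$ still generates $\saut$ (which fails for $n=3$ because of coincidences of small orders), and it also guarantees enough room to place the third-pair construction entirely inside $\saut$. The main obstacle is again the verification for the pair $\langle A_2,A_3\rangle$, and it is handled by the same refined choice of involutions restricted to $\saut$.
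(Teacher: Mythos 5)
You have isolated the right difficulty — the whole content of the proposition is the existence of three finite subgroups with finite pairwise joins whose union generates, and with $A_1$ generating $\sym(n)$, $A_2\subset W_n$, $A_3\subset\Sigma$ two of the three pairwise conditions are indeed automatic — but your argument stops exactly where the work begins. You never exhibit $A_2$ and $A_3$ with $\langle A_2,A_3\rangle$ finite; you only assert that a ``careful explicit choice'' should exist. As your own computation with $\e_1$ and $(0\,1)$ already indicates, the natural candidates fail, and the failure is robust: for $k\neq 1$ the product of $\e_1$ with the transposition $(0\,k)\in\Sigma$ acts on $H_1(F_n)\cong\Z^n$ by a matrix whose square is a non-trivial unipotent, hence has infinite order, so these two involutions do not commute and do not lie in any common finite subgroup. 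Thus the proposed ``resolution'' (commuting involutions, or a common stabilised graph structure) is not known to be available within your scheme, and without it the proposition is not proved. Two further points need attention but are repairable: the generation claim $\langle W_n,\Sigma\rangle=\aut$ is asserted rather than checked, and the copy of $\sym(n+1)$ in $\aut$ (as opposed to $\outn$) must be set up via basepoint-preserving automorphisms of the $(n{+}1)$-edge cage graph, since the relation $x_0x_1\cdots x_n=1$ is not preserved as a word by arbitrary permutations, only up to conjugacy.

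For comparison, the construction the paper records (with the verification carried out in \cite{mb:zies}) does not use $\sym(n+1)$ at all. It takes $A_1=\{\e_n,\eta\}\cup\sym(n-2)$, $A_2=\{\theta\}$, $A_3=\{\tau\}$, where $\theta=\r_{12}\circ\e_2$, $\tau=(2\,3)\circ\e_1$ and $\eta=(1\,2)\circ\e_1\circ\e_2$. The key device is that $\theta$ is a reflection in the infinite dihedral group $\langle\r_{12},\e_2\rangle$, so the Nielsen generator $\r_{12}=\theta\circ\e_2$ is realised as a product of elements drawn from \emph{different} $A_i$; the finiteness of the three pairwise joins is then a concrete, checkable computation rather than a structural hope. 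To complete your route you would need an equally explicit pair $(A_2,A_3)$ together with a verification that $\langle A_2,A_3\rangle$ is finite in $\aut$ itself (not merely that its image in ${\rm{GL}}(n,\Z)$ is finite), and a corresponding check of the modifications you sketch for $\saut$ when $n\ge 4$.
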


The exact nature of the sets $A_i$ will not be important
here, but we include a brief description of them to show that they are not complicated.
Let $\sym(n) <\aut$ be the group generated by permutations of our fixed basis
$\{x_1,\dots,x_n\}$. We write $(i\ j)$ to denote the involution that interchanges $x_i$ and $x_j$.
The involution $\e_i$ was defined earlier.
Let $W_n\cong C_2^n\rtimes \sym(n)$ be the group generated by $\sym(n)$ and the elements $\e_i$.
Let $\sym(n-2) < \sym(n)$ and $W_{n-2}< W_n$ be the subgroups corresponding to the sub-basis 
$\{x_3,...,x_n\}$, let  $\theta=\r_{12}\circ\e_2,\ \tau=(2\ 3)\circ \e_1$ and $\eta=(1\ 2)\circ\e_1\circ\e_2$. Then
define
$A_1 =\{\e_n,\eta\}\cup \sym(n-2),\  A_2 =\{\theta\},$ and $ A_3 =\{\tau\}$.  See \cite{mb:zies} for details.

\subsection{Dihedral Generators for \autn}

There are many different ways of generating $\aut$ by elements of order $2$. One way is to note that
every Nielsen transformation is contained in an infinite dihedral group, namely $L_{ij}:=\<\l_{ij},\e_j\>$
or $R_{ij}:=\<\l_{ij},\e_j\>$.  Note that all such subgroups are conjugate in $\aut$,
with $\e_i$ conjugating $R_{ij}$ to $L_{ij}$ and permutations of the standard basis conjugating
the different $R_{ij}$ to each other.

\begin{lemma}\label{l:dihedral} If $n=3m$ (resp. $3m+2$) 
then \autn contains the direct product of $2m$ (resp. $2m+1$)
$\infty$-dihedral groups, each of which contains a Nielsen transformation. Moreover, these dihedral
groups are all conjugate.
\end{lemma}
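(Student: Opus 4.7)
The plan is to construct the $2m$ (resp.\ $2m+1$) commuting dihedral subgroups block-by-block, placing two of them inside each of $m$ disjoint triples of basis elements and, in the $n=3m+2$ case, adding one extra copy on the remaining pair of basis elements. Conjugacy is then arranged by exhibiting explicit intertwiners.

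\textbf{The building block.} For distinct indices $a,b,c$ I would use the pair
$$
D_1 = \<\l_{ab},\, \e_b\>, \qquad D_2 = \<\r_{ac},\, \e_c\>.
$$
Each is $D_\infty$ because $\e_j$ inverts both $\l_{ij}$ and $\r_{ij}$, and each manifestly contains a Nielsen transformation. The commutation $[D_1,D_2]=1$ reduces to four generator-pair checks, three of which are immediate from disjoint support on the basis. The only substantive identity is $[\l_{ab},\r_{ac}]=1$: both automorphisms act trivially on every $x_k$ with $k\ne a$ and act on $x_a$ by $x_a\mapsto x_bx_a$ and $x_a\mapsto x_ax_c$ respectively, so they commute by the commutativity of left- and right-multiplication. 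Trivial intersection $D_1\cap D_2=\{1\}$ is visible on the basis: elements of $D_1$ fix $x_c$ and elements of $D_2$ fix $x_b$, while the two cyclic parts act on $x_a$ in incompatible ways (left-multiplication by a power of $x_b$ versus right-multiplication by a power of $x_c$).

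\textbf{Assembly and conjugacy.} For $n=3m$ I would partition $\{1,\dots,3m\}$ into $m$ disjoint triples $T_1,\dots,T_m$ and install the pair $(D_1^{(j)},D_2^{(j)})$ inside $T_j$. Subgroups from distinct triples have totally disjoint supports, so they pairwise commute and have trivial pairwise intersection; a coordinate-by-coordinate check then promotes the whole family to a direct product $D_\infty^{2m}\le \aut$. For $n=3m+2$, carry out the same construction on $3m$ indices and append $\<\l_{ij},\e_j\>$ on the remaining two indices, giving $D_\infty^{2m+1}$. For the conjugacy statement, within any triple $\{a,b,c\}$ the involution $\alpha=\e_a\cdot(b\ c)$ sends $\e_b\mapsto\e_c$ and $\l_{ab}\mapsto\e_a\l_{ac}\e_a=\r_{ac}^{-1}$ (using the easy identity $\e_i\l_{ij}\e_i=\r_{ij}^{-1}$), so $\alpha D_1\alpha^{-1}=D_2$; between triples one conjugates by a basis permutation, and the extra dihedral (in the $n=3m+2$ case) is likewise conjugate to $D_1$ by a basis permutation.

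\textbf{Main obstacle.} The whole difficulty lies in finding the right building block. The naive attempts $\<\l_{ab},\e_b\>$ and $\<\l_{cb},\e_b\>$ share $\e_b$ and therefore never form a direct product, while pairs of $\l$'s with distinct multipliers generally fail to commute. The crucial trick is to mix one $\l$ and one $\r$ sharing the common source $x_a$ but paired with the two distinct inverting involutions $\e_b$ and $\e_c$: this is essentially the unique configuration that packs two commuting, trivially intersecting copies of $D_\infty$ (each containing a Nielsen transformation) into three basis elements, and it is driven entirely by the commutation of left- and right-multiplication on a shared source.
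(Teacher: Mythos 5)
Your construction is correct and is essentially the paper's proof: the paper takes, in each triple $\{a,b,c\}=\{3i+1,3i+2,3i+3\}$, the pair $\<\r_{ab},\e_b\>$ and $\<\l_{ac},\e_c\>$ (and appends one more dihedral group on the leftover pair when $n=3m+2$), which is the mirror image of your $\<\l_{ab},\e_b\>$, $\<\r_{ac},\e_c\>$ and rests on exactly the same observation that a left- and a right-multiplier with common source commute. You simply supply more of the routine verifications (the $D_\infty$ structure, trivial intersections, and the explicit conjugator $\e_a(b\ c)$) than the paper, which asserts them.
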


\begin{proof} If $n=3m$ we take the product of $R_{3i+1,3i+2}$ and $L_{3i+1,3i+3}$ for $i=0,\dots,(m-1)$,
and if $n=3m+2$ we can add $R_{3m+1,3m+2}$.  
\end{proof}

\begin{remark}\label{r:dihedrals}
$\sautn$ contains a 
direct product of $(n-1)$ infinite dihedral
groups, namely $D_i=\langle\lambda_{i1}\rho_{i1}^{-1},\ \e_i\e_1\rangle$
with $i=2,\dots,n$. But these $D_i$ have finite image in ${\rm{GL}}(n,\Z)$, so their
product does not contain any Nielsen transformations. 
\end{remark}

\subsection{The column subgroups $M_n(i)$ of $\sautn$}

In $\sln$ the elementary matrices with off-diagonal entries in positions $(1,n),\dots,(n-1,n)$
generate a free abelian group of rank $(n-1)$. These generators lift to
Nielsen transformations $\l_{nj}\in\sautn$ that generate a non-abelian free group of
rank $(n-1)$ which we
denote  by $M_n(n-1)$. More generally, we define {\em column subgroups} $M_j(m)\cong F_m$ as follows.

\begin{definition} For integers $1\le m<j\le n$ define $M_j(m)<{\rm{SAut}}(F_n)$ to be the subgroup generated by 
$\{\l_{j1},\dots,\l_{jm}\}$, and $\-M_j(m)$ to be the subgroup generated
by $\{\r_{j1},\dots,\r_{jm}\}$.   
\end{definition}
  
\begin{lemma}\label{l:mu} For all positive integers $m<n$, there is a family of $2(n-m)$ commuting
conjugates of $M_n(m)$ in $\sautn$.
\end{lemma}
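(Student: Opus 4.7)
The plan is to exhibit the explicit family
$$\mathcal F = \{M_{m+1}(m),\ldots,M_n(m),\ \-M_{m+1}(m),\ldots,\-M_n(m)\},$$
which has cardinality $2(n-m)$, and verify two things: (i) every element of $\mathcal F$ is an $\sautn$-conjugate of $M_n(m)$, and (ii) the subgroups in $\mathcal F$ pairwise commute.

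For (i), each $M_i(m)$ with $i>m$ can be realised as the conjugate of $M_n(m)$ by a basis transposition $x_i\leftrightarrow x_n$, adjusted by some $\e_k$ with $k\le m$ so that the conjugating element lies in the index-two subgroup $\sautn$. The adjustment is harmless because $\e_k$ fixes every $\l_{nj}$ with $j\neq k$ and inverts only $\l_{nk}$, so it normalises $M_n(m)$. For $\-M_n(m)$, a direct calculation gives $\e_n\l_{nj}\e_n = \r_{nj}\inv$, hence $\e_n M_n(m)\e_n = \-M_n(m)$; combining this with the same parity correction by $\e_k$ ($k\le m$) and a subsequent basis transposition realises each $\-M_i(m)$ as an $\sautn$-conjugate of $M_n(m)$.

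For (ii), each generator $\l_{ij}$ or $\r_{ij}$ appearing in a member of $\mathcal F$ has row index $i\in\{m+1,\ldots,n\}$ and column index $j\in\{1,\ldots,m\}$. Two generators drawn from distinct members of $\mathcal F$ therefore have their row and column indices in disjoint subsets of $\{1,\ldots,n\}$, and the only way they can interact non-trivially is if their row indices coincide, that is, if the two members are $M_i(m)$ and $\-M_i(m)$ for a common $i$. In that case a direct check gives $\l_{ij}\r_{ij'}(x_i)=x_jx_ix_{j'}=\r_{ij'}\l_{ij}(x_i)$, using $i>m\ge j,j'$ to ensure $\l_{ij}$ fixes $x_{j'}$ and $\r_{ij'}$ fixes $x_j$.

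The main obstacle is purely bookkeeping: the conjugating elements must lie in $\sautn$ rather than merely in $\aut$, which is handled by the $\e_k$ correction above. Beyond that parity issue, both the conjugacy and the commutativity reductions rest on the clean row-versus-column index separation $\{1,\ldots,m\}\sqcup\{m+1,\ldots,n\}$.
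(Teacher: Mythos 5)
Your proposal is correct and follows essentially the same route as the paper: both exhibit the family $\{M_j(m),\,\-M_j(m)\}_{j=m+1}^{n}$, obtain the $M_j(m)$ by basis permutations fixing $x_1,\dots,x_m$ (the paper uses powers of a single cyclic shift $\zeta$ rather than individual transpositions), use an $\e$-involution corrected for parity to pass from $M_j(m)$ to $\-M_j(m)$, and verify commutativity via the row/column index separation. The differences are only in bookkeeping choices of conjugating elements.
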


\begin{proof} Let $\zeta$ be the automorphism that fixes $x_1,\dots,x_m$
and cyclically permutes $x_{m+1},\dots,x_n$ (composed with $\e_1$ if $n-m$ is odd).
For $i=0,\dots, n-m-1$, the conjugates
of $M_n(m)$ by $\zeta^i$, which are all of the form $M_j(m)$, pairwise commute.
The conjugate of $M_j(m)$ by $\e_1\e_j$ is $\-M_j(m)$, and 
for $j,j'\in\{m+1,\dots, n\}$ the subgroups $ M_j(m), \,  \-M_{j}(m),\,  M_{j'}(m),\,  \-M_{j'}(m)$
all commute with each other.  
\end{proof}

\begin{corollary} \label{c:getMn} The generators $\{\l_{n1},\dots\l_{n,n-1}\}$
for $M_n(n-1)<{\rm{SAut}}(F_n)$ have ample duplication for dimension $2n-5$ with base $1$
and duplication function
$$
f(m)=2(n-m).
$$
\end{corollary}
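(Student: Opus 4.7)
The plan is to verify the two defining conditions of ample duplication (Definition \ref{d:ample}) directly, using Lemma \ref{l:mu} for the duplication count and an elementary convexity-type estimate for the numerical condition.

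For condition (1), I would take an arbitrary subset $S\subseteq\{\l_{n1},\dots,\l_{n,n-1}\}$ with $|S|=k\ge 2$, say $S=\{\l_{nj}\mid j\in J\}$ for some $J\subseteq\{1,\dots,n-1\}$ of size $k$. The key observation is that there is a permutation $\pi$ of the basis $\{x_1,\dots,x_{n-1}\}$ (extended to fix $x_n$, and composed with some $\e_i$ if needed to land in $\saut$) that sends $J$ bijectively onto $\{1,\dots,k\}$. Conjugation by $\pi$ acts on Nielsen transformations by $\l_{ij}\mapsto \l_{\pi(i),\pi(j)}$, so it carries $\langle S\rangle$ onto $M_n(k)$. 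Lemma \ref{l:mu} then supplies $2(n-k)=f(k)$ commuting conjugates of $M_n(k)$ in $\saut$, which pull back via $\pi^{-1}$ to $2(n-k)$ commuting conjugates of $\langle S\rangle$. Thus the "commuting conjugates" alternative of condition (1) is satisfied for every $S$ with $|S|\ge 2$; we do not need the normalizer splitting.

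For condition (2), I need to verify that $d=2n-5 < (k-1)f(k)=2(k-1)(n-k)$ for every $k$ in the range $k_0+1=2,\dots,\min\{d+1,|\A|\}=\min\{2n-4,n-1\}=n-1$ (assuming $n\ge 3$). This is an exercise in one-variable minimisation: the quadratic $(k-1)(n-k)$ on the integer interval $[2,n-1]$ attains its minimum at the endpoints $k=2$ and $k=n-1$, where its value is $n-2$. Therefore $2(k-1)(n-k)\ge 2(n-2)=2n-4>2n-5=d$ throughout the relevant range, giving condition (2).

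The only potentially delicate point is the parity check required to keep the conjugating permutation inside $\saut$ rather than $\aut$, but this is routine: if $\pi$ has the wrong sign one multiplies by any $\e_i$ (which centralizes $M_n(k)$ for $i\notin\{1,\dots,k,n\}$, and which can be adjusted otherwise). Once both conditions are verified, the statement follows directly from Definition \ref{d:ample}; no new geometric input is needed. The substantive content is entirely absorbed into Lemma \ref{l:mu}, whose conjugation by powers of the cyclic permutation of $\{x_{m+1},\dots,x_n\}$ produces the required abundance of pairwise-commuting column subgroups.
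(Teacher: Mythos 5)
Your proof is correct and follows essentially the same route as the paper: conjugate an arbitrary $k$-element subset of $\{\l_{n1},\dots,\l_{n,n-1}\}$ by a basis permutation onto $M_n(k)$, invoke Lemma \ref{l:mu} for the $2(n-k)$ commuting conjugates, and check $2n-5<2(k-1)(n-k)$ on $k=2,\dots,n-1$ by observing that the concave quadratic attains its minimum $n-2$ at the endpoints (the paper phrases this as the parabola meeting the line $y=2\nu-4$ at $x=2$ and $x=\nu-1$). The parity fuss about keeping $\pi$ in $\saut$ is harmless but unnecessary, since $\saut$ is normal in $\aut$, so conjugating the commuting family back by $\pi^{-1}$ still exhibits conjugates of $\langle S\rangle$ by elements of $\saut$.
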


\begin{proof} Each $m$-element subset of the given generators
 generates a conjugate of $M_n(m)$, and Lemma \ref{l:mu} provides $2(n-m)$ commuting
conjugates of this, so it  suffices to check that $2n-4\le 2(m-1)(n-m)$ for $m=2,\dots,n-1$, which one
can do by noting that the parabola $y=2(x-1)(\nu -x)$ meets the horizontal line $y=2\nu-4$
at $x=2$ and $x=\nu -1$.
\end{proof}

\begin{prop}\label{p:getMn}
Suppose that $\sautn$ acts by isometries on
a complete \cat space $X$ of dimension
$d\le 2n-5$. If a Nielsen transformation has a fixed point in $X$,
then so does $M_n(n-1)\times \-M_n(n-1)$.
\end{prop}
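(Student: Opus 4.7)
The plan is to show that every Nielsen transformation has a fixed point in $X$, then apply the Ample Duplication Criterion separately to $M_n(n-1)$ and $\bar M_n(n-1)$, finally combining the two fixed-point sets via the fact that these subgroups commute.

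First I would observe that, for $n\ge 3$, all the Nielsen transformations $\l_{ij}$ and $\r_{kl}$ lie in a single $\sautn$-conjugacy class. This is a standard fact: basis permutations conjugate any $\l_{ij}$ to any $\l_{kl}$ and any $\r_{ij}$ to any $\r_{kl}$, while $\iota=\e_1\e_2\cdots\e_n$ (or $\e_1\cdots\e_{n-1}$ if $n$ is odd) conjugates each $\l_{ij}$ to the corresponding $\r_{ij}$; any parity issue with permutations is absorbed by multiplying by a suitable product $\e_m\e_{m'}$ of $\e$'s on letters disjoint from $\{i,j,k,l\}$, which lies in $\sautn$ and commutes with the relevant Nielsens. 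Consequently, if one Nielsen transformation $\alpha$ fixes a point $p\in X$, then every $\beta=\g\alpha\g\inv$ fixes $\g.p$. In particular, each generator $\l_{nj}$ of $M_n(n-1)$ and each generator $\r_{nj}$ of $\-M_n(n-1)$ has a fixed point in $X$.

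Next, Corollary \ref{c:getMn} provides ample duplication for the generating set $\{\l_{n1},\dots,\l_{n,n-1}\}$ of $M_n(n-1)$ at dimension $2n-5$ with base $k_0=1$. Since $\dim X\le 2n-5$ and every singleton subset of this generating set has a fixed point, the Ample Duplication Criterion (Theorem \ref{t:scheme}) yields a fixed point for $M_n(n-1)$. The same ample duplication holds for $\-M_n(n-1)$: by the proof of Lemma \ref{l:mu}, $\-M_n(n-1)=(\e_1\e_n)M_n(n-1)(\e_1\e_n)\inv$, so the generating set $\{\r_{n1},\dots,\r_{n,n-1}\}$ inherits the duplication data and Lemma \ref{l:mu} still supplies $2(n-m)$ commuting conjugates; since its generators also have fixed points, the criterion again applies, giving a fixed point for $\-M_n(n-1)$.

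Finally, a direct computation shows $[\l_{ni},\r_{nj}]=1$ for all $i,j\in\{1,\dots,n-1\}$: both fix every $x_k$ with $k\neq n$, and on $x_n$ they act by $\l_{ni}.x_n=x_ix_n$ and $\r_{nj}.x_n=x_nx_j$, which evidently commute as $i,j\neq n$. Hence $M_n(n-1)$ and $\-M_n(n-1)$ commute elementwise in $\sautn$, and Corollary \ref{c:commutes} produces a common fixed point for $M_n(n-1)\times\-M_n(n-1)$, completing the argument.

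The one point requiring genuine care is the $\sautn$-conjugacy of all Nielsen transformations; once that is in hand, the rest is a clean application of Corollary \ref{c:getMn}, Theorem \ref{t:scheme}, and Corollary \ref{c:commutes}.
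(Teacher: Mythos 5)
Your argument is correct and is essentially the paper's own proof: use the hypothesis plus conjugacy of Nielsen transformations in $\sautn$ to give each generator $\l_{nj}$ a fixed point, apply the Ample Duplication Criterion via Corollary \ref{c:getMn} to obtain a fixed point for $M_n(n-1)$, transfer it to the conjugate $\-M_n(n-1)$, and combine the two with Corollary \ref{c:commutes} since the subgroups commute. The one slip is in your parity repair: to correct an odd basis permutation you must multiply by a \emph{single} $\e_m$ with $m$ outside the support of the relevant Nielsen transformation (its image in ${\rm{GL}}(n,\Z)$ has determinant $-1$, so the product lands in $\sautn$ while the conjugation action on the Nielsen transformation is unchanged), whereas a product $\e_m\e_{m'}$ already lies in $\sautn$ and therefore cannot change the coset.
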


\begin{proof} Corollary \ref{c:getMn}  allows us to  apply the Ample Duplication Criterion to $M_n(n-1)$
to conclude that $M_n(n-1)$ has a fixed point in $X$. It follows that
 its conjugate $\-M_n(n-1)$ does too. These subgroups commute, so
their product $M_n(n-1)\times \-M_n(n-1)$ also has a fixed point.
\end{proof}

\section{Fixed Points for Nielsen-Elliptic Actions}

The purpose of this section is to prove the following theorem and a variant concerning actions on Hadamard manifolds (Theorem \ref{t:hadamard}). Lemma \ref{l:fi} tells us that if $\aut$ is acting by isometries on a complete \cat space
and $\sautn$ has a fixed point, then so does $\aut$, so to obtain the sharpest results we concentrate on $\sautn$.

\begin{theorem} \label{t:n-1}
Suppose that $\sautn$ acts by isometries on a complete \cat space $X$ of dimension less than $n-1$.
If a Nielsen transformation has a fixed point in $X$, then so does $\sautn$.
\end{theorem}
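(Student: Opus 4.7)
The plan is as follows. First, since $\dim X \le n-2 \le 2n-5$ for $n\ge 3$ (the case $n=3$ reducing to the Bogopolski--Culler--Vogtmann $\FR$ theorem for $\mathrm{SAut}(F_3)$), Proposition~\ref{p:getMn} provides a fixed point in $X$ for the column subgroup $P_n := M_n(n-1)\times \overline{M}_n(n-1)$. Conjugating $P_n$ by basis permutations lying in $\aut$, each column subgroup $P_i := \langle \lambda_{ij}, \rho_{ij} : j\neq i\rangle$ $(i=1,\ldots,n)$ inherits a fixed point in $X$. Because $\bigcup_{i=1}^n P_i$ generates $\sautn$ (each Nielsen set $\Ni_i$ lies inside $P_i$) and $\dim X\le n-2$, the $\Delta_n$-criterion (Proposition~\ref{p:Delta}) reduces the theorem to showing that $\langle P_i : i\in I\rangle$ has a fixed point in $X$ for every $(n-1)$-element subset $I\subset\{1,\ldots,n\}$.

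By the symmetry afforded by the basis permutations, the $(n-1)$-subcollections of $\{P_1,\ldots,P_n\}$ are all conjugate in $\saut$, so it is enough to treat $I=\{1,\ldots,n-1\}$. The subgroup $H := \langle P_1,\ldots,P_{n-1}\rangle$ is generated by those Nielsen transformations that fix $x_n$; it contains the natural copy of $\mathrm{SAut}(F_{n-1})$ (generated by Nielsens $\lambda_{ij},\rho_{ij}$ with $i,j<n$) together with the free-abelian "column-$n$ twist" subgroup $A := \langle \lambda_{in},\rho_{in}: i<n\rangle\cong \Z^{2(n-1)}$. The generators of $A$ pairwise commute and are individual Nielsen transformations each with a fixed point in $X$, so Corollary~\ref{c:commutes} provides a common fixed point for $A$, and $\fix(A)$ is a closed convex subspace of $X$.

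The main obstacle is then to show that $H$ has a fixed point in $\fix(A)$: the ambient dimension $\dim X \le n-2$ is exactly one too large to permit a direct inductive application of Theorem~\ref{t:n-1} to $\mathrm{SAut}(F_{n-1})$ (which would require the strict inequality $\dim<n-2$). My proposed resolution is a descent argument inside $\fix(A)$. Each column subgroup $P_i\cap \mathrm{SAut}(F_{n-1})$ for $i<n$ has a fixed point in $X$ by restriction of the fixed point of $P_i$, and by applying Proposition~\ref{p:norm} to its interaction with $A$ one obtains a fixed point of each such restricted column subgroup inside $\fix(A)$. Metric Helly (Corollary~\ref{metric:helly}) applied within $\fix(A)$ then combines these into a common fixed point for $\mathrm{SAut}(F_{n-1})$, which together with the common fixed point for $A$ yields a fixed point for $H$. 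The combinatorial heart of the argument is verifying that the iterated $(n-2)$-fold intersections within $\fix(A)$ remain non-empty at each stage of the descent; if the direct Metric Helly step leaves the dimension too large, I would reinforce it with the Ample Duplication Criterion (Theorem~\ref{t:scheme}) applied to a trimmed generating set for $\mathrm{SAut}(F_{n-1})$ acting on $\fix(A)$, and induct on $n$ with base case $n=3$.
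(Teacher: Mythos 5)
Your opening and closing moves match the paper: Proposition \ref{p:getMn} gives a fixed point for $M=M_n(n-1)\times\overline{M}_n(n-1)$, and the endgame is the $\Delta_n$ criterion applied to $n$ conjugate generating sets in dimension $\le n-2$. The middle of your argument, however, has a genuine gap. You reduce to fixing $H=\langle P_1,\dots,P_{n-1}\rangle$ (the Nielsen maps fixing $x_n$) and propose to split $H$ into the standard copy of ${\rm{SAut}}(F_{n-1})$ and $A=\langle \lambda_{in},\rho_{in}: i<n\rangle\cong\Z^{2(n-1)}$, then ``descend into $\fix(A)$''. The step ``by applying Proposition \ref{p:norm} to its interaction with $A$'' is where this fails: neither $A$ nor the restricted column subgroups normalize one another. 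Concretely, $\lambda_{12}^{-1}\lambda_{1n}\lambda_{12}$ sends $x_1\mapsto x_2x_nx_2^{-1}x_1$, which is not in $A$, and $\lambda_{1n}^{-1}\lambda_{12}\lambda_{1n}$ sends $x_1\mapsto x_nx_2x_n^{-1}x_1$, which is not in $P_1\cap{\rm{SAut}}(F_{n-1})$. In particular $\fix(A)$ is not invariant under ${\rm{SAut}}(F_{n-1})$ or its column subgroups, so there is no action on $\fix(A)$ in which to run Metric Helly or Ample Duplication, and the dimension shortfall you acknowledge ($\dim X\le n-2$ where an inductive appeal to the theorem one rank down would need $\dim X<n-2$) is never actually resolved. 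Choosing the full column subgroups $P_i$ as the sets for the $\Delta_n$ criterion is also strategically too ambitious: $H$ essentially contains the whole problem one rank down together with a large ``relative'' part, so fixing it in dimension $n-2$ is about as hard as the theorem itself.

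The paper avoids both difficulties by feeding the $\Delta_n$ criterion the $n$ small conjugate sets $\Ni_i=\{\lambda_{i,i-1},\rho_{i,i-1}\}$ rather than the column subgroups. The decisive observation (Lemma \ref{l:indN}) is that each $\Ni_l$ with $2\le l\le n$ normalizes $M$, while $\Ni_n\subset M$; an induction on the number of sets $\Ni_i$ then produces a common fixed point for $\Ni_2\cup\dots\cup\Ni_n$ by repeated use of Proposition \ref{p:norm} alone (a subgroup with a fixed point that normalizes $M$ shares a fixed point with $M$), with no Helly step in the critical dimension and no need for any subgroup to preserve a fixed-point set. To repair your argument you would need to replace the descent into $\fix(A)$ with a normalization scheme of this kind (compare Proposition \ref{p:norm2}).
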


This result reduces the proof of Theorem \ref{i:main}
to the task of forcing Nielsen generators to have fixed points, which we pursue in the next section. However,
Theorem \ref{t:n-1} is also of interest in its own right, as we shall now explain.  

\begin{cor}\label{c:ss} Whenever $\sautn$ acts by semisimple isometries on a complete \cat space of dimension
less than $n-1$, it has a fixed point.
\end{cor}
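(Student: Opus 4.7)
The plan is to reduce Corollary \ref{c:ss} to Theorem \ref{t:n-1}. That theorem asserts that whenever $\sautn$ acts by isometries on a complete \cat space of dimension less than $n-1$, the existence of a single Nielsen transformation with a fixed point already suffices to give $\sautn$ a global fixed point. Hence, to prove Corollary \ref{c:ss}, it is enough to establish the following intermediate statement:

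\smallskip
\emph{Key Claim.} Under any semisimple action of $\sautn$ ($n\ge 3$) on a complete \cat space, every Nielsen transformation $\lambda_{ij}$ is elliptic, i.e.~$|\lambda_{ij}|=0$.
\smallskip

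I would prove the Key Claim along the same lines as the Dehn-twist result of \cite{mb:bill} that underpins Corollary \ref{c:mcg}. The idea is to exploit the abundance of commuting conjugates of a Nielsen transformation in $\sautn$. Concretely, $\lambda_{12}$ commutes with $\rho_{12}$ and with all $\lambda_{kj}, \rho_{kj}$ whose first index is different from $1$; and most of these commuting elements are conjugate to $\lambda_{12}$ inside $\sautn$ (via basis permutations paired, when necessary, with sign involutions $\e_i$ to stay in the index-two subgroup). If $\lambda_{12}$ were hyperbolic of translation length $\tau>0$ under a semisimple action, these commuting conjugates would each act by translation $\tau$ along parallel axes, and the flat-strip theorem in \cat geometry would organise these axes into a single flat subspace. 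Combining this with commutator identities in $\sautn$ such as $[\lambda_{32},\lambda_{13}]=\lambda_{12}$ (a direct computation on the basis $\{x_1,\dots,x_n\}$), together with standard translation-length inequalities and the geometry of commuting semisimple isometries, one would derive a contradiction and conclude $\tau=0$.

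Once the Key Claim is in hand, Corollary \ref{c:ss} follows at once: a semisimple action of $\sautn$ on a complete \cat space $X$ with $\dim X < n-1$ has an elliptic Nielsen transformation, and Theorem \ref{t:n-1} then delivers a global fixed point. The main obstacle is therefore the Key Claim itself, i.e.~the translation-length bookkeeping needed to rule out a hyperbolic Nielsen transformation; the analogy with \cite{mb:bill} indicates the shape of the argument, but the precise manipulation of axes inside the flat provided by the commuting conjugates is where the real work lies. Everything downstream of the Key Claim is formal.
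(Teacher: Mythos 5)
Your reduction is exactly the paper's: Corollary \ref{c:ss} follows from Theorem \ref{t:n-1} once one knows that some Nielsen transformation is elliptic in any semisimple action. But the entire substance of the corollary lies in that Key Claim, and your proposal does not prove it. The paper does not prove it from scratch either: for $n\ge 4$ it cites \cite{mb:rddh}, where the ellipticity of Nielsen transformations under semisimple actions is established using the structure of their centralisers (the ``rhombic dodecahedron'' argument), and for $n=3$ it invokes Culler--Vogtmann's property $\FR$ \cite{CV} directly, since dimension less than $n-1=2$ means an $\R$-tree. Your Key Claim is asserted for all $n\ge 3$, but the cited result is only available for $n\ge 4$; the $n=3$ case must be handled by a different route (as the Remark after Theorem \ref{t:hadamard} hints, $n\le 3$ behaves differently for semisimple actions), so at minimum you need to split off $n=3$ as the paper does.

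The sketch you offer for the Key Claim is a research programme, not an argument. The analogy with the Dehn-twist result of \cite{mb:bill} is the right instinct, but note what is \emph{not} available here: the distortion argument that makes Proposition \ref{p:sln} easy for $\sln$ fails for $\aut$, because cyclic subgroups of $\aut$ are undistorted (the paper points this out explicitly in Section 9). So one cannot conclude $|\lambda_{12}|=0$ from growth considerations, and the combination of ``commuting conjugates span a flat'' with the identity $[\lambda_{32},\lambda_{13}]=\lambda_{12}$ does not by itself yield a contradiction -- you never specify the inequality that fails when $\tau>0$, and it is precisely this step that requires the detailed centraliser analysis of \cite{mb:rddh}. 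As written, the proposal has a genuine gap at its load-bearing step; it becomes correct if you replace the sketch by the citation to \cite{mb:rddh} for $n\ge 4$ and add the $\FR$ argument for $n=3$.
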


\begin{proof} If $n\le 2$ the theorem is vacuous, and if $n=3$ it is the statement that $\sautn$ has property
${\rm{F}}\R$, which was proved by Culler and Vogtmann \cite{CV}. For $n\ge 4$, it is proved in \cite{mb:rddh},
using the structure of centralisers, 
that Nielsen transformations have fixed points in any semisimple action of $\sautn$ on a complete \cat space,
so Theorem \ref{t:n-1} applies.
\end{proof}

 The column subgroups $M_n(m)$ were introduced in the last section. In this
 section a prominent role will be played by $M=M_n(n-1)\times\-M_n(n-1)$.

A simple calculation shows:

\begin{lemma} If $2\le l\le n$ then $\Ni_{l}=\{\l_{l,l-1},\, \r_{l,l-1}\}$ normalizes $M$.
\end{lemma}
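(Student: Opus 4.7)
The plan is to give an intrinsic description of $M=M_n(n-1)\times\-M_n(n-1)$ that makes both generators in $\Ni_l$ act transparently, and then dispatch the cases $l\le n-1$ and $l=n$ separately.

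For each $w\in F(x_1,\dots,x_{n-1})$, let $\mu_w,\bar\mu_w\in\aut$ denote the automorphisms that fix $x_1,\dots,x_{n-1}$ and send $x_n$ to $wx_n$ and to $x_nw$, respectively. A direct check shows that each $\mu_w$ commutes with each $\bar\mu_{w'}$ and that both $w\mapsto \mu_w$ and $w\mapsto \bar\mu_w$ are injective parametrisations of subgroups of $\aut$ whose images meet trivially (if $\mu_w=\bar\mu_{w'}$ then $wx_n=x_nw'$ in $F_n$, which forces $w=w'=1$ since $w,w'$ do not involve $x_n$). Because $\lambda_{nj}=\mu_{x_j}$ and $\rho_{nj}=\bar\mu_{x_j}$, these images coincide with $M_n(n-1)$ and $\-M_n(n-1)$, so
\[
M \;=\; \bigl\{\phi\in\aut : \phi(x_i)=x_i\text{ for } i<n,\ \phi(x_n)=ux_nv \text{ for some } u,v\in F(x_1,\dots,x_{n-1})\bigr\}.
\]

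Suppose $l\le n-1$. Since $\lambda_{l,l-1}$ and $\rho_{l,l-1}$ only move $x_l$, they both fix $x_n$ and restrict to automorphisms of $F(x_1,\dots,x_{n-1})$. For any $\nu\in\Ni_l$ and any $\phi\in M$ with $\phi(x_n)=ux_nv$, a direct computation gives $(\nu\phi\nu^{-1})(x_i)=x_i$ for $i<n$, because $\nu^{-1}(x_i)\in F(x_1,\dots,x_{n-1})$ is pointwise fixed by $\phi$; and $(\nu\phi\nu^{-1})(x_n)=\nu(u)\,x_n\,\nu(v)$, with $\nu(u),\nu(v)\in F(x_1,\dots,x_{n-1})$. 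Hence $\nu\phi\nu^{-1}\in M$, and the symmetric argument with $\nu^{-1}$ in place of $\nu$ yields $\nu M\nu^{-1}=M$.

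For $l=n$ the verification is immediate from the description of $M$: we have $\lambda_{n,n-1}=\mu_{x_{n-1}}\in M_n(n-1)\subseteq M$ and $\rho_{n,n-1}=\bar\mu_{x_{n-1}}\in \-M_n(n-1)\subseteq M$, so $\Ni_n\subseteq M$ and normalization is automatic. The only step demanding any real care is the identification of $M$ with the intrinsic subgroup displayed above; once that is established, the normalization claim is formal, and I do not anticipate a genuine obstacle.
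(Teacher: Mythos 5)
Your proof is correct, and it is exactly the "simple calculation" the paper asserts but does not write out: the intrinsic description of $M$ as the automorphisms fixing $x_1,\dots,x_{n-1}$ and sending $x_n\mapsto ux_nv$ with $u,v\in F(x_1,\dots,x_{n-1})$ makes the conjugation by $\Ni_l$ for $l\le n-1$ transparent, and the case $l=n$ is handled by noting $\Ni_n\subset M$. No issues.
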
 
 
\begin{lemma} \label{l:indN} Whenever
$\sautn$ acts by isometries on a complete \cat space $X$, if
$M<\sautn$ has a 
fixed point in $X$, then
the subgroup generated by  
the union of the sets $\Ni_i\ (i=2,\dots,n)$ has a fixed point in $X$. 
\end{lemma}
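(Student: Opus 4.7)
The plan is a three-step reduction combined with an Ample Duplication argument.

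First, by the preceding lemma each $\Ni_i$ normalises $M$, so the whole subgroup $N := \langle \bigcup_{i=2}^n \Ni_i\rangle$ normalises $M$. Consequently $N$ acts by isometries on the closed convex subspace $Y := \fix(M)\subseteq X$, which is itself a complete \cat space of dimension $\dim Y \le \dim X$. The task therefore reduces to producing a fixed point for $N$ on $Y$.

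Second, I verify that each $\langle \Ni_i \rangle$ (for $i=2,\ldots,n$) has a fixed point in $Y$. For $i=n$, $\Ni_n \subseteq M$, so $\langle\Ni_n\rangle$ fixes $Y$ pointwise. For $2\le i<n$, take $\pi_i := (i\,n)\circ\e_1\in\sautn$ (a basis transposition corrected to lie in $\sautn$); conjugation by $\pi_i$ sends $M = M_n(n-1)\times \-M_n(n-1)$ to the analogous column-product $M_i^{\ast}\times \-M_i^{\ast}$, which contains $\Ni_i$. Being conjugate to $M$, this subgroup has a fixed point in $X$, and hence so does $\langle\Ni_i\rangle$. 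Since $\langle\Ni_i\rangle$ normalises $M$, Proposition \ref{p:norm} promotes this to a common fixed point of $\langle\Ni_i\rangle$ and $M$, which lies in $Y$.

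The third step is the main obstacle: combining these individual fixed-point sets on $Y$ into a single common fixed point for all of $N$. My plan is to invoke the Ample Duplication Criterion (Theorem \ref{t:scheme}) applied to $\Lambda = N$ with generating set $\mathcal{A} = \bigcup_{i=2}^n \Ni_i$ of cardinality $2(n-1)$ and base $k_0 = 2$. The structural input mirrors the braid-group setting of Proposition \ref{p:B_m}: the chain-commutativity $[\Ni_i,\Ni_j]=1$ for $|i-j|\ge 2$ means that any subset $S\subseteq\mathcal A$ whose supporting indices split into two non-adjacent blocks decomposes as $S_1\sqcup S_2$ with $\langle S_1\rangle$ centralising (and therefore normalising) $\langle S_2\rangle$; and when the supporting indices of $S$ form a single interval of length $k$, conjugation by non-overlapping cyclic shifts of $\{x_1,\ldots,x_n\}$ produces on the order of $\lfloor (n-1)/(k+1)\rfloor$ mutually commuting conjugates of $\langle S\rangle$ inside $\sautn$, because the shifted supports act on disjoint blocks of basis elements. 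The numerical verification of condition (2) in the definition of ample duplication then proceeds along the lines of Lemma \ref{l:count} and uses the dimension constraint $\dim X < n-1$ inherited from the Theorem \ref{t:n-1} context in which the lemma will be used; matching the resulting duplication function against this bound is the delicate calculation and is where I expect the real work to lie.
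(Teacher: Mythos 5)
Your Steps 1 and 2 are essentially sound (and Step 2 can be shortened: each $\Ni_i$ is conjugate to $\Ni_n\subset M$, so $\<\Ni_i\>$ has a fixed point in $X$, and since it normalises $M$, Proposition \ref{p:norm} puts that fixed point inside $Y=\fix(M)$). The genuine gap is Step 3, for two separate reasons. First, Lemma \ref{l:indN} carries \emph{no} dimension hypothesis whatsoever -- it is asserted for an arbitrary complete CAT$(0)$ space -- whereas the Ample Duplication Criterion is unusable without a bound $\dim X\le d$ entering condition (2) of Definition \ref{d:ample}. So even if your plan went through, it would prove a strictly weaker statement than the one asked for. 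Second, the numerics fail even under the bound $d\le n-2$ that you propose to import from Theorem \ref{t:n-1}: with duplication function $f(k)=\lfloor (n-1)/(k+1)\rfloor$ and $k=3$, condition (2) demands $n-2< 2\lfloor (n-1)/4\rfloor$, but $2\lfloor (n-1)/4\rfloor\le (n-1)/2\le n-2$ for all $n\ge 3$. This is no accident: counting commuting conjugates obtained by translating a block of basis elements is exactly the computation of Proposition \ref{p:ample}, and it only reaches dimension roughly $n/3$. The whole point of Lemma \ref{l:indN} is to reach dimension $n-2$ (and in fact all dimensions) by a different mechanism.

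The mechanism you are missing is hiding in the interval case of your own dichotomy. The paper argues by induction on the number $k$ of sets $\Ni_i$ under consideration: if every $k$ of them have a common fixed point (with $k\le n-2$), then so do every $k+1$. If the index set $I$ splits into two non-adjacent blocks, the corresponding subgroups commute, each has a fixed point by induction, and Proposition \ref{p:norm} (or Corollary \ref{c:commutes}) gives a common one. If instead $I$ is an interval, conjugate so that $I=\{n-k,\dots,n\}$. Then $N':=\<\Ni_{n-1},\dots,\Ni_{n-k}\>$ has a fixed point by the inductive hypothesis, $N'$ normalises $M$, and $M$ has a fixed point by assumption, so Proposition \ref{p:norm} yields a point fixed by both $N'$ and $M$; since $\Ni_n\subset M$, that point is fixed by all $k+1$ sets. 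No count of commuting conjugates, and hence no dimension restriction, is needed. Replace your Step 3 with this induction and the proof is complete.
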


\begin{proof} Proceeding by induction, we shall argue that if
$k\le n-2$ and every $k$
of the sets $\Ni_i$ have a common fixed point in $X$, then
any $(k+1)$ of these sets do.

The sets $\Ni_i$ are all conjugate and $\Ni_n$ is contained in $M$,
so the base case $k=1$ is covered. For the inductive step,
we consider $k+1<n$ distinct 
sets of the form $\Ni_i$, indexed by $I\subset\{2,\dots,n\}$.
Either $I$ is the disjoint union of non-empty sets $I_1,\,I_2$
such that $|s-t|\neq 1$ for all $s\in I_1,\, t\in I_2$, or else
we may conjugate  in $\sautn$ to assume
that the sets are $\Ni_n,\Ni_{n-1},\dots,\Ni_{n-k}$. 

In the first case we know that each of the subgroups $H_j=
\langle \Ni_i \mid i\in I_i\rangle,\ j=1,2$,  has a fixed point,
because $|I_j|\le k$. And since these subgroups
commute, they have a common fixed point (Proposition
\ref{p:norm}), so we are done.

It remains to find a common fixed point for
$\Ni_n,\Ni_{n-1},\dots,\Ni_{n-k}$.  
By induction, $N:=\langle \Ni_{n-1},\dots,\Ni_{n-k}\rangle$ has
a fixed point in $X$, and so does $M$. As $N$
 normalizes $M$, they share a fixed point,  by Proposition \ref{p:norm}.   As $\Ni_n\subset M$, this completes the induction.
 \end{proof}

\noindent{\bf{Proof of Theorem \ref{t:n-1}} }
We are assuming that $\sautn$ is acting by isometries on a complete \cat space $X$
of dimension at most $n-2$ and that some  Nielsen transformation has a fixed point.
There is nothing to prove if $n\le 2$. If $n\ge 3$ then $n-2\le 2n-5$ and
 Proposition \ref{p:getMn} tells us that $M$ has a fixed point in $X$. Hence, by
Lemma \ref{l:indN}, the union of any $(n-1)$ of the sets $\Ni_1,\dots,\Ni_n$
has a common fixed point (such any two such unions are conjugate).
The union of the $\Ni_i$ generate $\sautn$ (Lemma \ref{l:NiGen}).
Thus we have a finite generating set such that every subset of cardinality $(n-1)$
has a fixed point. Since $\dim X < (n-1)$, Proposition \ref{basicHelly}  
applies and $\sautn$ has a fixed point.
\qed

\subsection*{Related Strategies}

The strategy of the proof used above has several variations of a
 general nature. We record one such variation but omit the proof.

\begin{prop}\label{p:norm2} Let $\A$ be a finite generating
set for a group $\G$ acting by isometries on a complete \cat space $X$
of dimension less than $d$. Let $M<\G$ be a subgroup and assume the following
conditions hold:
\begin{enumerate}
\item every element of $\A$ has a fixed point in $X$;
\item $M$ has a fixed point in $X$;
\item if $k\le d$ then for each $k$-element subset $S\subset\A$, either 
 $S=S_1\sqcup S_2$ where the $S_i$ are non-empty and $\<S_1\>$ normalises $\<S_2\>$, 
 or else  $\<S\>$ normalises $\g^{-1}M\g$ for some $\gamma\in\G$,
  and $\gamma^{-1}M\gamma\cap\A$ is not contained in $S$.
 \end{enumerate}
 Then $\G$ has a fixed point in $X$.
 \end{prop}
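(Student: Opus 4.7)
The plan is to mirror the proof of Theorem \ref{t:n-1}: induct on $|S|$ to show that $\<S\>$ has a common fixed point in $X$ for every $S \subseteq \A$ with $|S| \le d$, and then apply the $\Delta_n$ Criterion (Proposition \ref{basicHelly}) to the singleton sets $\{a\}$, $a \in \A$, to conclude that $\G = \<\A\>$ has a fixed point in $X$. Recall that $\dim X < d$, so the Helly condition for this application of Proposition \ref{basicHelly} involves subsets of $\A$ of cardinality $d$, which is exactly the top range of the induction.

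The base case $|S| = 1$ is condition (1). For the inductive step with $|S| = k$ and $1 < k \le d$, apply condition (3). In Case 1 one has $S = S_1 \sqcup S_2$ with $|S_i| < k$; the inductive hypothesis yields fixed points for $\<S_1\>$ and $\<S_2\>$, and Proposition \ref{p:norm} (together with $\<S_1\>$ normalising $\<S_2\>$) combines them into a fixed point for $\<S\>$.

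Case 2 is the heart of the argument and parallels the endgame of Lemma \ref{l:indN}. Writing $H := \gamma^{-1} M \gamma$, condition (2) together with conjugation gives $H$ a fixed point. The idea is to peel off a generator of $S$ that lies in $H$: when $s_0 \in S \cap H$ is available, the set $S \setminus \{s_0\}$ has size $k-1$, still normalises $H$ (being a subset of $\<S\>$), and by the inductive hypothesis has a fixed point. Proposition \ref{p:norm} then produces a point of $\fix(H) \cap \fix(\<S \setminus \{s_0\}\>)$, which is automatically fixed by $s_0 \in H$, hence by $\<S\>$. The role of the guaranteed element $a \in (H \cap \A) \setminus S$ from condition (3) is to manufacture such a peel even when $S$ itself misses $H$: one passes to $S \cup \{a\}$, which still normalises $H$ (since $a \in H$) but now meets it non-trivially, and re-invokes condition (3) to organise the required reduction.

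The main obstacle is ensuring this enlargement procedure terminates within the dimensional budget: condition (3) is only assumed for subsets of size at most $d$, so the iteration must either reach a splittable configuration (Case 1) or produce a subset meeting some conjugate of $M$ before the cardinality exceeds $d$. The finiteness of $\A$, together with the freedom in choosing $\gamma$ afresh at each application of condition (3) (so that different conjugates of $M$ may be used at different stages), is what keeps the bookkeeping under control. Once every $d$-element subset of $\A$ has been shown to generate a subgroup with a fixed point, a final appeal to Proposition \ref{basicHelly} completes the proof.
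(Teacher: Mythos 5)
The paper deliberately omits the proof of Proposition \ref{p:norm2}, so your argument must stand on its own. Its skeleton is right: induct on $|S|$ up to size $d$, then finish with Proposition \ref{basicHelly} applied to singletons (correctly calibrated, since $\dim X\le d-1$). The base case, Case 1, and the first half of Case 2 are fine: when some $s_0\in S\cap H$ exists (where $H=\gamma^{-1}M\gamma$), the set $S\smallsetminus\{s_0\}$ has cardinality $k-1$, the inductive hypothesis and Proposition \ref{p:norm} give a point of $\fix(\<S\smallsetminus\{s_0\}\>)\cap\fix(H)$, and $s_0\in H$ fixes that point. But condition (3) supplies an element $a$ of $\gamma^{-1}M\gamma\cap\A$ lying \emph{outside} $S$; it does not supply an element of $S$ lying inside $\gamma^{-1}M\gamma$. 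So $S\cap H=\emptyset$ is the generic situation, and your fallback has to carry the whole weight of Case 2.

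That fallback --- pass to $S\cup\{a\}$ and ``re-invoke condition (3)'' --- is circular, and your closing appeal to the finiteness of $\A$ does not repair it. The induction descends on cardinality, so nothing is known about sets of size $k+1$; a fixed point for $\<S\cup\{a\}\>$ is at least as strong as the statement you are trying to prove, since $\fix(\<S\cup\{a\}\>)\subseteq\fix(\<S\>)$. If the enlarged set falls into Case 1, its two pieces may each have size $\ge k$ and are not covered by the inductive hypothesis; if it falls into Case 2 you must enlarge again; and once the cardinality exceeds $d$, condition (3) is no longer even assumed. Finiteness of $\A$ bounds the number of enlargements but produces no fixed point at the end of the chain. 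What the argument actually needs in this case is the configuration exploited in Lemma \ref{l:indN}: after conjugation, $S$ \emph{meets} $H$ nontrivially and $\<S\smallsetminus H\>$ normalises $H$, so that one can peel off $S\cap H$. That configuration is not implied by condition (3) as stated, so either you must extract it from the hypotheses by a genuinely new argument or the proof is incomplete as written.
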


\subsection{Semisimple actions on Hadamard manifolds}

A {\em{Hadamard manifold}} is a simply connected manifold with a smooth, complete Riemannian metric with
non-positive sectional curvature.  Such manifolds are the most classical examples of \cat spaces.

\begin{theorem} \label{t:hadamard}  
$\sautn$ cannot act non-trivially by semisimple isometries on any Hadamard manifold
of dimension less than $2n-4$.
\end{theorem}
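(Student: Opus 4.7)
The plan is to combine the fixed-point machinery of the preceding sections with the additional rigidity of Hadamard manifolds, and then conclude via the representation theory of $\sautn$. Suppose $\sautn$ acts by semisimple isometries on a Hadamard manifold $X$ of dimension $d<2n-4$; we aim to show that the action is trivial. The key structural feature I shall exploit is that at any point $p$ fixed by a subgroup $H<\sautn$, the exponential map $\exp_p:T_pX\to X$ is an $H$-equivariant diffeomorphism conjugating the $H$-action on $X$ to its linear isotropy action on $T_pX\cong\R^d$. In particular, if $\sautn$ fixes a point $p$ and the isotropy representation $\rho:\sautn\to O(d)$ is trivial, then $\sautn$ acts trivially on $X$.

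By the result of \cite{mb:rddh} invoked in Corollary \ref{c:ss}, every Nielsen transformation is elliptic in any semisimple action of $\sautn$ on a complete \cat space. Since $d\le 2n-5$, Proposition \ref{p:getMn} then yields a fixed point $p$ for $M_n(n-1)\times\-M_n(n-1)$. Lemma \ref{l:indN} produces a fixed point for $\<\Ni_2\cup\dots\cup\Ni_n\>$, and conjugating by cyclic permutations of the basis shows that every subgroup generated by $(n-1)$ of the sets $\Ni_1,\dots,\Ni_n$ has a fixed point. At this stage Proposition \ref{basicHelly} already yields a fixed point for all of $\sautn$ when $d<n-1$; to cover the enlarged range $n-1\le d<2n-4$, I would pass to the totally geodesic sub-Hadamard manifold $F:=\fix(M_n(n-1)\times\-M_n(n-1))$ and iterate the fixed-point criteria there, using that $\dim F$ is controlled by the fixed subspace of the faithful isotropy action of $M_n(n-1)\times\-M_n(n-1)$ at $p$, and that the stabiliser of $F$ in $\sautn$ contains enough Nielsen generators to reapply variants of Theorem \ref{t:n-1} on the lower-dimensional Hadamard manifold $F$.

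Once a global fixed point $p^\ast$ for $\sautn$ has been obtained, faithfulness of the isotropy action at $p^\ast$ ensures that $\g\in\sautn$ acts trivially on $X$ if and only if $\rho(\g)=1$. Since $\sautn$ is perfect for $n\ge 3$ and, by known lower bounds on the dimensions of non-trivial linear representations of $\sautn$, every orthogonal representation $\rho:\sautn\to O(d)$ with $d<2n-4$ is trivial, we conclude that $\sautn$ acts trivially on $X$, contradicting the assumption that the action is non-trivial. The main obstacle is the step promoting fixed points of $M_n(n-1)\times\-M_n(n-1)$ to fixed points of all of $\sautn$ in the range $n-1\le d<2n-4$: the $\Delta_n$-criterion applied to the $n$ sets $\Ni_i$ is sharp at $d=n-1$, so any improvement must genuinely exploit smoothness of the Hadamard manifold, iterating fixed-point theorems on the totally geodesic sub-Hadamard manifolds that arise and using the codimension bounds coming from the faithful linear isotropy action to make the iteration terminate.
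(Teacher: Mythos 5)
Your opening moves match the paper's: ellipticity of Nielsen transformations for semisimple actions via \cite{mb:rddh}, Proposition \ref{p:getMn} to obtain a fixed point for $M=M_n(n-1)\times\-M_n(n-1)$, and passage to the totally geodesic sub-Hadamard manifold $Y=\fix(M)$. The final representation-theoretic ingredient (orthogonal representations of $\sautn$ in dimension $<2n-4$ are trivial, via Potapchik--Rapinchuk, Margulis superrigidity and the Landazuri--Seitz/Kleidman--Liebeck degree bounds) is also the right one. But the middle of your argument --- promoting the fixed point of $M$ to a global fixed point of $\sautn$ in the range $n-1\le d<2n-4$ --- is a genuine gap, and you concede as much. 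Nothing bounds the codimension $\delta$ of $Y$ from below: the isotropy representation of $M$ at $p$ could be a single orthogonal reflection ($\delta=1$), so ``codimension bounds coming from the faithful linear isotropy action'' do not make your iteration terminate; and the natural subgroup preserving $Y$ is a copy of ${\rm SAut}(F_{n-1})$ inside the normalizer of $M$, for which a variant of Theorem \ref{t:n-1} would require $\dim Y<n-2$, which is not guaranteed either.

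The paper does not seek a global fixed point at all. It argues by contradiction with an induction on $n$: if $\delta\ge 2$ then $\dim Y<2(n-1)-4$, so by the inductive hypothesis the copy of ${\rm SAut}(F_{n-1})$ normalizing $M$ acts trivially on $Y$; its derivative representation into $O(\delta,\R)$ is then trivial by the representation theory, so ${\rm SAut}(F_{n-1})$ fixes $T_pX$ pointwise and, via geodesics from $p$, acts trivially on all of $X$. The contradiction comes not from a Helly-type argument but from the fact that the normal closure of any Nielsen transformation is all of $\sautn$, so the kernel of a non-trivial action cannot contain one. The case $\delta=1$ (where $M$ must act by a reflection, again forcing a Nielsen transformation into the kernel), the case $Y=X$, and the base cases $n\le 4$ (handled by the Bridson--Vogtmann theorem on actions on contractible manifolds, since the linear bounds are unavailable there) all require the separate treatments that your sketch omits. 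To repair your proof you would need to supply this induction-on-$n$ and normal-closure mechanism; the ``iterate on totally geodesic submanifolds'' idea as stated does not close.
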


\begin{proof} The proof is by induction on $n$. If $n\le 2$, there is nothing
to prove. If $n=3$ or $4$,  then $2n-4 \le n$, and Bridson and Vogtmann \cite{BV} proved that
$\sautn$ cannot act non-trivially on any contractible manifold of dimension less than $n$
(even by homeomorphisms). 

Suppose now that $n\ge 5$. In this range, the Nielsen transformations have
fixed points whenever $\sautn$ acts by semisimple isometries on a \cat space \cite{mb:rddh}.
It follows from Proposition \ref{p:getMn} that $M = M_n(n-1)\times \-M_n(n-1)$ does too. 
Let $X$ be a Hadamard manifold
of dimension less than $2n-4$ on which $\sautn$ acts by isometries and let $Y$ be the
fixed-point set of $M$.  Because the action is by isometries,  $Y$ is a smooth, totally geodesic submanifold \cite[p.59]{kob};
in particular it is a Hadamard manifold. 

If $Y=X$ then we are done,   because  $M$
contains Nielsen transformations and the conjugates of any such transformation
 generate $\sautn$, so if $M$ was contained in the kernel of the action on $X$ then
 the action of  $\sautn$ would be trivial. 
 
Let $\delta$ be the codimension of $Y\subseteq X$.  
We will obtain a contradiction from the assumption that the action of $\sautn$
is non-trivial and $\delta\ge 1$.
 
 Observe that the normalizer of 
$M$ contains a natural copy of ${\rm{SAut}}(F_{n-1})$, consisting of the
automorphisms that fix the last element of our fixed free basis $\{x_1,\dots,x_n\}$
and leave $\<x_1,\dots,x_{n-1}\>$ invariant. Since it normalizes $M$, this subgroup
${\rm{SA}}_{n-1}\cong {\rm{SA}}_{n-1}$  leaves $Y$ invariant.  

If $\delta\ge 2$, then by induction
${\rm{SA}}_{n-1}$ acts trivially on $Y$.  The derivative of the action of ${\rm{SA}}_{n-1}$ at 
any point $p\in Y$ preserves the orthogonal complement of $T_pY$ in $T_pX$, giving
a representation  ${\rm{SA}}_{n-1}\to O(\delta,\R)$, which we shall prove
is trivial.

Potapchik and Rapinchuk \cite{PR} proved that in the range we are considering,  $n\ge 5$ and
$\delta < 2n-4$,  every representation   $\rho: {\rm{SAut}}(F_{n-1})\to {\rm{GL}}(\delta,\mathbb{C})$
factors through the standard representation ${\rm{SAut}}(F_{n-1})\to {\rm{SL}}(n-1,\Z)$ unless
the image of $\rho$ contains  a semidirect product 
$\Z^{n-1}\rtimes {\rm{SL}}(n-1,\Z)$.  Margulis superrigidity \cite{marg} tells us that 
${\rm{SL}}(n-1,\Z)$ cannot have infinite image in $O(\delta,\R)$,  so the image of ${\rm{SA}}_{n-1}\to O(\delta,\R)$
 must be a finite quotient of ${\rm{SL}}(n-1,\Z)$.  Every such quotient is
a finite extension of the simple group ${\rm{PSL}}(n-1,\Z/p)$ for some prime $p$. 
Lanazuri and Seitz \cite{LS} proved that for $N\ge 3$,  the minimal degree of a complex representation of 
${\rm{PSL}}(N,\Z/p)$ occurs when $p=2$,  
where the degree is $2^{N-1} -1$, and  
Kleidman and Liebeck \cite{KL} proved
that no finite extension of ${\rm{PSL}}(N,\Z/p)$ has a faithful representation of lesser degree. 
In our situation, $n\ge 5$,  so $N=n-1\ge 4$ and $\delta \le  2N-3 <  2^{N-1}-1$.  Thus ${\rm{SA}}_{n-1}\to O(\delta,\R)$
is trivial, as claimed.
This means that  ${\rm{SA}}_{n-1}$, which contains Nielsen transformations,  acts trivially on the tangent space at $p$.
It follows that ${\rm{SA}}_{n-1}$ fixes  every geodesic issuing from $p$,  and since every point of $X$ is 
joined to $p$ by a (unique) geodesic,  the action of ${\rm{SA}}_{n-1}$ on $X$ is trivial.
  But the kernel of the (non-trivial) action of ${\rm{SAut}}(F_{n})$ on $X$
 cannot contain a Nielsen transformation  because the normal
 closure of any Nielsen transformation is the whole group.  This contradiction completes the proof when $\delta\ge 2$.

It remains to rule out the possibility $\delta=1$. The only non-trivial isometry of a Hadamard manifold that
fixes a codimension-1 convex submanifold is orthogonal reflection in that submanifold. Thus
the restriction of the action ${\rm{SAut}}(F_n)\to \isom(X)$ to $M$ has image that is cyclic of order $2$.
In particular, this means that at least one of $\l_{n1},\, \l_{n2}, \, \l_{n1}\l_{n2}$ has trivial
image.   And since  $\l_{21}$ conjugates  $\l_{n2}$ to $\l_{n1}\l_{n2}$, this means that the kernel of the action 
contains a Nielsen transformation. 
As in the previous case, this contradicts the assumption  that  $\sautn$ is acting non-trivially.
\end{proof}

\begin{remark} From the standard representation ${\rm{SAut}}(F_{n})\to {\rm{SL}}(n,\Z)$,
one obtains an action with unbounded orbits of ${\rm{SAut}}(F_{n})$ by isometries on the Hadamard
manifold ${\rm{SL}}(n,\R)/{\rm{SO}}(n,\R)$, which has dimension $\frac 1 2 n(n+1)$, but this
action has parabolic isometries. If $n\le 3$, then there also exist semisimple actions
in certain dimensions \cite{mb:bill}, but I do not know of any non-trivial semisimple actions without a 
global fixed point for $n\ge 4$, and it seems possible that they might not exist  (cf.~Proposition \ref{p:sln}).
\end{remark}

\section{Forcing Nielsen Transformations to be Elliptic}

Our main result concerning fixed points for actions of $\aut$ is the following.

\firstthmA*

\begin{proof} By applying Proposition \ref{p:prodLemma} to 
the product of dihedral groups from Lemma \ref{l:dihedral}, we see that whenever
$\aut$ acts  in the given range, a Nielsen transformation will have a fixed point. 
It then follows from Theorem \ref{t:n-1} that $\sautn$ has a fixed point, and from
Lemma \ref{l:fi} that $\aut$ has a fixed point. 
\end{proof}

In the case $n\ge 3m$, the following result provides an alternative way of seeing that 
Nielsen transformations must have fixed points.  We have a fixed basis $\{x_1,\dots,x_n\}$ for $F_n$,
and by a {\em standard copy} of ${\rm{Aut}}(F_3)$ in \autn we mean
the group of automorphisms that leave a rank-3 free factor $\<x_i, x_j, x_k\>$ invariant and fix  the
remaining basis elements.

\begin{theorem}\label{t:elliptic} If $n\ge 3m$ and $d< 2m$, then whenever
\autn acts by isometries on a complete \cat space $X$
of dimension $d$, each standard copy of ${\rm{Aut}}(F_3)$ has a fixed point.
\end{theorem}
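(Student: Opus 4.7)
The plan is to exhibit $m$ pairwise commuting standard copies of ${\rm Aut}(F_3)$ inside $\aut$, equip each with the three torsion-generated subsets produced by Proposition~\ref{p:A123}, and apply the Bootstrap Lemma (Proposition~\ref{l:bootstrap}) with $k_l=2$ across the $m$ factors; a final conjugacy argument then upgrades ``some copy fixes a point'' to ``every standard copy fixes a point.''

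Since $n\ge 3m$, I partition $3m$ of the basis indices into triples $T_1,\dots,T_m$ arranged so that the triple supporting the given standard copy $H$ is among them. For each $l$, let $H_l<\aut$ denote the standard copy of ${\rm Aut}(F_3)$ on the free factor $\langle x_j:j\in T_l\rangle$. Distinct $H_l$ act on disjoint free factors, so they pairwise commute. Applying Proposition~\ref{p:A123} inside each $H_l\cong{\rm Aut}(F_3)$ yields three sets $A_1^{(l)},A_2^{(l)},A_3^{(l)}\subset H_l$ of torsion elements whose union $S_l$ generates $H_l$ and such that $\langle A_i^{(l)}\cup A_j^{(l)}\rangle$ is finite for all $i,j\in\{1,2,3\}$. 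The explicit description of these sets recorded after Proposition~\ref{p:A123} makes clear that every generator is supported on the basis elements of $T_l$, so $[s,s']=1$ whenever $s\in S_l$, $s'\in S_{l'}$ with $l\neq l'$.

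I then invoke Proposition~\ref{l:bootstrap} with $k_l=2$: the hypothesis $d<2m=k_1+\dots+k_m$ is precisely our assumption, and each $2$-element subset of $S_l$ lies inside some finite subgroup $\langle A_i^{(l)}\cup A_j^{(l)}\rangle$, which fixes a point of $X$ by Corollary~\ref{c:finite}. The Bootstrap Lemma delivers an index $l_0$ for which every finite subset of $S_{l_0}$ has a common fixed point; Metric Helly (Corollary~\ref{metric:helly}) promotes this to a common fixed point for all of $S_{l_0}$, so $H_{l_0}=\langle S_{l_0}\rangle$ fixes a point of $X$. Any two standard copies of ${\rm Aut}(F_3)$ are conjugate in $\aut$ by a permutation of the basis, and conjugation carries fixed points to fixed points, so in particular $H$ fixes a point, as required.

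The main subtlety is conceptual rather than technical: one must match the ``$3$'' in the triple of torsion-generated subsets from Proposition~\ref{p:A123} with the ``$2$'' of $k_l=2$ in the Bootstrap Lemma so that pairs of generators within a single factor are guaranteed to lie in a finite subgroup. Once this matching is identified, the factor-of-two improvement over the bound $\dim X<m$ coming from the naive Product Lemma (which would correspond to $k_l=1$) is automatic. The only routine verification needed is that Proposition~\ref{p:A123} admits a version supported on the chosen rank-$3$ subfactor, which is immediate from the explicit formulas for $\e_n$, $\eta$, $\theta$, $\tau$, and $\sym(n-2)$ listed after that proposition.
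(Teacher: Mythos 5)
Your argument is correct and is essentially the paper's own proof: the paper applies Corollary~\ref{p:triple} (Filling Triples), which is exactly the Bootstrap Lemma with $k_i=2$, to the direct product of $m$ commuting standard copies of ${\rm Aut}(F_3)$ equipped with the torsion sets from Proposition~\ref{p:A123}, and then concludes by conjugacy of standard copies just as you do. (Your appeal to Metric Helly at the end is unnecessary, since $S_{l_0}$ is itself finite, but this is harmless.)
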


\begin{proof} If $n\ge 3m$, then \autn contains a direct product $D$ of $m$ standard
copies $\G_i$ of ${\rm{Aut}}(F_3)$, all of which are conjugate. Let $A_{i,1}, A_{i,2}, A_{i,3} < \G_i$
be the subgroups corresponding to the groups  $A_1,A_2,A_3\subset {\rm{Aut}}(F_3)$
described in Proposition \ref{p:A123}.  By construction, $\< A_{i,j}, A_{i,k}\>$ is finite for all $i,j,k$,
and $A_{i,j}$ commutes with $A_{k,l}$ when $i\neq k$. Thus we are in situation of 
Corollary \ref{p:triple} and 
deduce that one of the $\G_i$ has a fixed point in $X$. Each standard copy of ${\rm{Aut}}(F_3)$ in \autn
is conjugate to each $\G_i$, and hence has a fixed point.
\end{proof}

\section{Estimating the Fixed-Point Dimension of ${\rm{SL}}(n,\Z)$}

There is a well established and fruitful analogy between mapping class groups, automorphism
groups of free groups, and arithmetic lattices in semisimple Lie groups, particularly ${\rm{SL}}(n,\Z)$.
In previous sections we established constraints on the way in which the first two classes
of groups can act on \cat spaces, in this section we turn our attention to ${\rm{SL}}(n,\Z)$,
where the discussion is much more straightforward. Much of this straightforwardness can be traced
to the fact that all of the infinite cyclic subgroups of $\aut$ and $\modg$ are quasigeodesics in the
word metric on the ambient group, whereas the cyclic subgroups of $\sln$ generated by elementary
matrices are not (cf.~Proposition \ref{p:sln}).

We remind the reader that an {\em{elementary matrix}}
in \slnz $\,$ is a matrix of the form $E_{ij}=I_n +U_{ij}$, where $I_n$ is
the identity matrix and $U_{ij}$ is the matrix whose only
non-zero entry is a $1$ in the $(i,j)$-place with $i\neq j$. There is only one
conjugacy class of elementary matrices. It is not
difficult to show that the elementary matrices generate \slnz $\,$
but it is considerably more difficult to see that if $n\ge 3$ then they
{\em{boundedly generate}}: there exist elementary matrices
$E_1,\dots,E_N$ such that every $\gamma\in\sln$ can be expressed
as a product $\gamma = E_1^{p_1}E_2^{p_2}\cdots E_N^{p_N}$
for some $p_i\in\mathbb Z$; see \cite{bddGen}.

\begin{lemma}\label{Eenough}
 Let $n\ge 3$. If \slnz $\,$ acts by isometries on
a complete \cat space $X$ and some elementary matrix 
has a fixed point, then \slnz $\,$ has a fixed point.
\end{lemma}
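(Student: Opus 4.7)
The plan is to upgrade the single fixed point into a bounded $\sln$-orbit and then appeal to Proposition \ref{bddOrbit}. Since all elementary matrices are conjugate in $\sln$, each $E_{ij}$ fixes some point of $X$; in particular, the bounded generators $E_1,\dots,E_N$ have fixed points $y_1,\dots,y_N\in X$. Set $R:=\max_i d(y_1,y_i)$.

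The key observation is that although the cyclic subgroup $\<E_i\>$ is badly distorted in the word metric on $\sln$ (the very feature flagged in the paragraph preceding the lemma), a power $E_i^p$ acting on $X$ cannot move a point far relative to $y_i$: since $E_i^p$ fixes $y_i$,
\[
d(E_i^p.z,\,z)\le d(E_i^p.z,\,E_i^p.y_i)+d(y_i,z)=2\,d(z,y_i),
\]
a bound that is \emph{independent of} $p$. This displacement inequality is precisely what converts bounded generation into a bounded orbit.

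Given $\gamma\in\sln$, bounded generation provides integers $p_i$ with $\gamma=E_1^{p_1}\cdots E_N^{p_N}$. I would set $z_{N+1}:=y_1$ and $z_k:=E_k^{p_k}.z_{k+1}$, so that $z_1=\gamma.y_1$, and apply the displacement inequality at each step. Combined with the triangle inequality $d(z_{k+1},y_k)\le d(z_{k+1},y_1)+R$, this yields a linear recursion of the form $a_k\le 3a_{k+1}+2R$ for $a_k:=d(z_k,y_1)$, which integrates to $a_1\le R(3^N-1)$, crucially independently of the exponents $p_i$ and hence of $\gamma$. Thus the entire $\sln$-orbit of $y_1$ is bounded, and Proposition \ref{bddOrbit} supplies a global fixed point. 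The only nontrivial external input is the bounded generation theorem itself; the geometric content reduces to the one-line displacement inequality, and the only thing to be careful about is verifying that the constant in the estimate depends only on $R$ and $N$ and not on the word representing $\gamma$.
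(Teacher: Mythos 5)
Your argument is correct and is essentially the paper's: both reduce the lemma, via bounded generation and the conjugacy of elementary matrices, to showing that the displacement of a basepoint under words $E_1^{p_1}\cdots E_N^{p_N}$ is bounded independently of the exponents, using only that each $E_i$ is elliptic, and then invoke Proposition \ref{bddOrbit}. The paper organizes the estimate as an induction on $N$ (absorbing the last factor by choosing the basepoint in $\fix(e_N)$), whereas you unroll it into an explicit recursion with constant $R(3^N-1)$; this is only a bookkeeping difference.
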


\begin{proof} Since all elementary matrices are conjugate,
the hypothesis implies that each of the elementary matrices $E_i$
in the description of the bounded generation property has
a fixed point.  So in the light of Proposition \ref{bddOrbit},  we will be done if we can prove that
for any set of elliptic isometries $\mathcal{E}=\{e_1,\dots,e_N\}$ there is a function $C_{\mathcal{E}}:X\to\R$ such that
$d(x,\gamma(x))<C_{\mathcal{E}}(x)$ for all $\gamma\in\{e_1^{p_1}\cdots e_N^{p_N} \mid p_i\in\Z\}$ and  $x\in X$.
We argue by induction on $N$.
The case $N=1$ is trivial.  For the inductive step, we fix $x_0\in\fix(e_N)$ and note that
if $\gamma = e_1^{p_1}\cdots e_N^{p_N}$ then $\gamma(x_0)=\gamma'(x_0)$, 
where $\gamma'=e_1^{p_1}\cdots e_{N-1}^{p_{N-1}}$.  Let ${\mathcal{E'}}=\{e_1,\dots,e_{N-1}\}$. By induction,   
$d(x_0,\gamma(x_0))<{C_{\mathcal{E'}}}(x_0)$.  For an arbitrary $x\in X$,  by the triangle inequality, 
$$d(x, \gamma(x)) \le d(x, x_0) + d(x_0, \gamma(x_0)) + d(\gamma(x_0), \gamma(x)) = 2\, d(x,x_0) + d(x_0,\gamma(x_0)).$$
Defining $C_{\mathcal{E}}(x) := 2\, d(x,x_0) + {C_{\mathcal{E'}}}(x_0)$ completes the proof.
\end{proof}

In the years that have elapsed since the first draft of this article, the following observation
has been made independently by several authors.

\begin{prop}\label{p:sln} If $n\ge 3$, then $\sln$ fixes a point whenever it acts by semisimple
isometries on a complete \cat space.
\end{prop}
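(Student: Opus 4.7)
The plan is to reduce, via Lemma \ref{Eenough}, to showing that a single elementary matrix $E_{ij}$ must be elliptic. Suppose for contradiction that $\sln$ acts by semisimple isometries on a complete CAT$(0)$ space $X$ with no fixed point. By Lemma \ref{Eenough}, no elementary matrix fixes a point, so each $E_{ij}$ is hyperbolic. Since all the $E_{ij}$ are conjugate in $\sln$, they share a common translation length $\ell > 0$.

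The heart of the argument is that the cyclic subgroup $\langle E_{12}\rangle$ is severely distorted in $\sln$, which contradicts the standard fact that a hyperbolic semisimple isometry generates an undistorted cyclic subgroup of its ambient finitely generated group. The distortion comes from the commutator identity $[E_{ik}^{\,a}, E_{kj}^{\,b}] = E_{ij}^{\,ab}$, which holds for any distinct $i,j,k$ and is where the hypothesis $n \ge 3$ is used (to provide a third index $k$). Taking $S$ to be the finite generating set consisting of all elementary matrices and letting $L(m)$ denote the word length of $E_{ij}^{\,m}$ in $S$, the identity gives the submultiplicative bound $L(ab) \le 2L(a) + 2L(b)$. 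Iterating this on powers of $2$ yields $L(2^m) \le m^2 L(2)$, and writing general $N$ in binary then gives $L(N) = O\bigl((\log N)^{C}\bigr)$ for some constant $C$.

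On the other hand, because $E_{ij}$ is a hyperbolic semisimple isometry of $X$ with translation length $\ell$, for any basepoint $x_0 \in X$ we have $d(x_0, E_{ij}^{\,N} x_0) \ge N\ell$. Writing $E_{ij}^{\,N}$ as a word of length $L(N)$ in the generators and applying the triangle inequality gives the opposite estimate $d(x_0, E_{ij}^{\,N} x_0) \le L(N)\cdot D$, where $D$ is the maximum of $d(x_0, s x_0)$ over $s \in S$. Combining the two bounds yields $N\ell \le O\bigl((\log N)^C\bigr)\cdot D$, which fails for large $N$. Hence $E_{ij}$ is elliptic, and Lemma \ref{Eenough} produces a fixed point for $\sln$.

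The main obstacle is the distortion estimate itself: the commutator identity gives only $L(ab) \le 2L(a)+2L(b)$, which one has to bootstrap carefully (via dyadic iteration and binary expansion) to turn into polylogarithmic growth of $L(N)$. Everything else, namely the passage from semisimplicity to a linear displacement lower bound $d(x_0, E_{ij}^{\,N}x_0) \ge N\ell$ and the word-length displacement upper bound, is standard CAT$(0)$ geometry. It is worth noting that this argument uses semisimplicity in an essential way, consistent with the contrast drawn in the introduction between Proposition \ref{p:sln} and the more delicate estimates on $\fd(\sln)$ for general (possibly parabolic) actions.
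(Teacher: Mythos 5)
Your argument is correct and follows essentially the same route as the paper: the metric distortion of $\langle E_{ij}\rangle$ in $\sln$ (which the paper simply cites as known) contradicts the undistortedness of the cyclic subgroup generated by a hyperbolic isometry, and Lemma \ref{Eenough} then finishes the job. One small point of care: to obtain $L(2^m)\le m^2L(2)$ you must iterate $L(ab)\le 2L(a)+2L(b)$ with the balanced splitting $2^m=2^{\lceil m/2\rceil}\cdot 2^{\lfloor m/2\rfloor}$ (the unbalanced splitting $2\cdot 2^{m-1}$ only yields a bound linear in $2^m$), but with that reading your polylogarithmic estimate, and hence the contradiction with $d(x_0,E_{ij}^N x_0)\ge N\ell$, is sound.
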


\begin{proof} If $n\ge 3$, the cyclic subgroup generated
by an elementary matrix $E_{ij}\in \sln$ is metrically distorted, in other words $\lim_{k\to\infty}\frac{1}{k}
d (1,E_{ij}^k) = 0$, where $d$ is the word metric associated to a finite generating set of $\sln$.
On the other hand, if a finitely generated group $\G$ acts by isometries on a complete \cat space, then
the cyclic subgroup generated by each hyperbolic isometry is undistorted in $\G$. 
Thus, whenever $\sln$ acts by semisimple isometries on a complete \cat space, the elementary matrices
have fixed points (i.e.~are elliptic isometries). Lemma \ref{Eenough} completes the proof.
\end{proof}

Farb \cite{farb} defines a group to be of type ${\rm{FA}}_d$ if it has a fixed point whenever it acts
by simplicial isometries on a piecewise Euclidean complex with finitely many isometry types of
cells that  is \cat and has dimension at most $d$. Simplicial isometries of such spaces are semisimple \cite{mb:pams}.

\begin{cor} If $n\ge 3$, then $\sln$ has property ${\rm{FA}}_d$ for every $d\in\N$.
\end{cor}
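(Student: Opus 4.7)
The plan is to combine Proposition \ref{p:sln} with the observation, recorded just before the statement, that simplicial isometries of a piecewise Euclidean complex with only finitely many isometry types of cells are automatically semisimple.

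Concretely, suppose $\sln$ acts by simplicial isometries on a piecewise Euclidean \cat complex $X$ with finitely many isometry types of cells and with $\dim X\le d$. By the result of \cite{mb:pams} quoted in the excerpt, every simplicial isometry of such an $X$ is semisimple, so the action of $\sln$ on $X$ is by semisimple isometries. Since $X$ is in particular a complete \cat space (being a CAT$(0)$ polyhedral complex with finitely many shapes of cells), Proposition \ref{p:sln} applies and produces a global fixed point. As this argument is valid for every $d\in\N$, we conclude that $\sln$ has property ${\rm{FA}}_d$ for all $d$.

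There is essentially no obstacle here: the whole content has been placed in Proposition \ref{p:sln} (elementary matrices are distorted, hence must act elliptically under any semisimple action, and Lemma \ref{Eenough} promotes one elliptic elementary matrix to a global fixed point via bounded generation). The corollary is just the packaging of that proposition into Farb's ${\rm{FA}}_d$ language, with the semisimplicity hypothesis supplied for free by the nature of simplicial actions on finite-shape polyhedral complexes. So the proof should literally be one or two sentences.
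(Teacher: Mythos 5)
Your proposal is correct and is exactly the paper's intended argument: the sentence preceding the corollary supplies semisimplicity via \cite{mb:pams}, completeness of the complex follows from the finitely-many-shapes hypothesis, and Proposition \ref{p:sln} then gives the fixed point. Nothing further is needed.
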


\subsection{Actions that are not semisimple} 

$\sln$ acts properly by isometries on the symmetric space ${\rm{SL}}(n,\R)/{\rm{SO}}(n,\R)$, which is 
a complete \cat manifold of dimension $\frac 1 2 n(n+1)$. Thus, in contrast to Proposition \ref{p:sln}, there
are interesting actions of $\sln$ on complete \cat spaces if one allows parabolics. In the
context of the present article, it is natural to ask in what dimensions $\sln$ has fixed-point free actions.
The best that I can offer is a lower bound that is linear in $n$.
 
\begin{proposition}\label{t:sln} $\ $
\begin{itemize}
\item $\fd({\rm{GL}}(n,\Z)) \ge n-2$.
\item $\fd({\rm{SL}}(n,\Z)) \ge n-2$ if $n$ is odd.
\item $\fd({\rm{SL}}(n,\Z)) \ge n-3$ if $n$ is even.
\end{itemize}
\end{proposition}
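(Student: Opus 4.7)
My plan is to apply the Product Lemma with Torsion (Proposition~\ref{p:prodLemma}) to a direct product of infinite dihedral subgroups, each generated by involutions and each containing an elementary matrix. Once one elementary matrix is forced to have a fixed point, Lemma~\ref{Eenough} delivers a fixed point for ${\rm{SL}}(n,\Z)$, and for the ${\rm{GL}}(n,\Z)$ statement Lemma~\ref{l:fi} will then promote this to a fixed point for the full group.

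To execute this in ${\rm{GL}}(n,\Z)$, let $\e_i$ denote the diagonal matrix with $-1$ in position $(i,i)$ and $+1$ elsewhere, and write $\e_I:=\prod_{i\in I}\e_i$. The key computation, which one verifies coordinate by coordinate, is the parity formula
$$\e_I\, E_{jk}\, \e_I \ =\ E_{jk}^{\,(-1)^{|I\cap\{j,k\}|}}.$$
In particular $\e_i$ inverts $E_{in}$ under conjugation, so $D_i:=\<\e_i, E_{in}\>$ is an infinite dihedral group generated by the two involutions $\e_i$ and $E_{in}\e_i$. For distinct $i,j\in\{1,\dots,n-1\}$ the parity formula with $I=\{i\}$ shows that $\e_i$ commutes with $E_{jn}$ (since $i\ne j$ and $i\ne n$), and as the diagonal matrices commute automatically, $D_i$ and $D_j$ commute. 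Thus $D_1\times\cdots\times D_{n-1}$ embeds in ${\rm{GL}}(n,\Z)$; by Proposition~\ref{p:prodLemma}, whenever ${\rm{GL}}(n,\Z)$ acts on a complete CAT$(0)$ space of dimension $<n-1$ some $D_i$, and therefore some $E_{in}$, acquires a fixed point, and the chain of reductions above yields $\fd({\rm{GL}}(n,\Z))\ge n-2$.

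For ${\rm{SL}}(n,\Z)$ I must replace $\e_i$ (which has determinant $-1$) by a sign-change with evenly many $-1$'s. I will take $\tau_i:=\e_1\e_i$ for $i=2,\dots,n-1$; the parity formula with $I=\{1,i\}$ shows that $\tau_i$ inverts $E_{in}$ while commuting with each $E_{jn}$ for $j\in\{2,\dots,n-1\}\setminus\{i\}$ (indeed $|\{1,i\}\cap\{j,n\}|=0$ in that case). The subgroups $D_i':=\<\tau_i, E_{in}\>$ are then $n-2$ commuting, torsion-generated, infinite dihedral groups, and the same mechanism as above gives $\fd({\rm{SL}}(n,\Z))\ge n-3$, the stated bound when $n$ is even.

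The case of odd $n$ requires one additional commuting factor, and this is where a little care is needed. I propose to take $\tau_1:=\e_2\e_3\cdots\e_n$: the index set $I=\{2,\dots,n\}$ has cardinality $n-1$, which is even precisely because $n$ is odd, so $\tau_1\in{\rm{SL}}(n,\Z)$. The parity formula then gives $|I\cap\{1,n\}|=1$, so $\tau_1$ inverts $E_{1n}$, and $|I\cap\{i,n\}|=2$ for each $i\in\{2,\dots,n-1\}$, so $\tau_1$ commutes with every $E_{in}$ in that range. Since all the $\tau_i$ are diagonal they commute with one another, so $D_1':=\<\tau_1, E_{1n}\>$ commutes with $D_2',\dots,D_{n-1}'$. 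This produces $n-1$ commuting torsion-generated dihedral factors and, by the same argument, the bound $\fd({\rm{SL}}(n,\Z))\ge n-2$.
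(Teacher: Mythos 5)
Your argument for ${\rm{GL}}(n,\Z)$ and for ${\rm{SL}}(n,\Z)$ with $n$ even is correct and is essentially the paper's proof: the same dihedral subgroups, the Product Lemma with Torsion, then Lemma \ref{Eenough} and Lemma \ref{l:fi}.

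The odd case, however, has a genuine gap. You keep $D_i'=\<\e_1\e_i,\,E_{in}\>$ for $i=2,\dots,n-1$ and adjoin $D_1'=\<\e_2\cdots\e_n,\,E_{1n}\>$, justifying the commutation of $D_1'$ with the other factors by observing that the diagonal matrices commute among themselves. But that verifies only one of the required commutators. Your own parity formula with $I=\{1,i\}$ and $\{j,k\}=\{1,n\}$ gives $|I\cap\{1,n\}|=1$, so $\e_1\e_i$ \emph{inverts} $E_{1n}$ rather than commuting with it. Hence $\<D_1',D_i'\>$ is not a direct product for any $i\ge 2$, and the Product Lemma does not apply to your collection; the extra factor you added is incompatible with the $n-2$ factors you already had.

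The repair is to change all of the sign matrices at once rather than adding one more: for $i=1,\dots,n-1$ take $\tau_i'=\e_{I_i}$ with $I_i=\{1,\dots,n\}\ssm\{i\}$, i.e.\ $\tau_i'=-I_n\e_i$. Then $\det\tau_i'=(-1)^{n-1}=1$ precisely because $n$ is odd, the parity formula gives $|I_i\cap\{i,n\}|=1$ (so $\tau_i'$ inverts $E_{in}$) and $|I_i\cap\{j,n\}|=2$ for $j\neq i$, $j\le n-1$ (so $\tau_i'$ commutes with $E_{jn}$), and the resulting $n-1$ dihedral groups genuinely pairwise commute. This is exactly the paper's choice ($\<-I_n\tau_i,E_{in}\>$ in its notation). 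It is worth noting that your parity formula shows this choice is essentially forced: a sign matrix $\e_I$ of determinant $1$ that inverts $E_{in}$ and commutes with all $E_{jn}$, $j\neq i$, must have $I=\{1,\dots,n\}\ssm\{i\}$, whose cardinality $n-1$ is odd when $n$ is even --- which is why the even case only yields $\fd({\rm{SL}}(n,\Z))\ge n-3$.
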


\begin{proof} Let $\tau_i$ be the diagonal matrix
with $-1$ in the $i$-th place and ones elsewhere. Then
$\tau_i$ commutes with $E_{jn}$ for $j\neq i$ and conjugates
$E_{in}$ to $E_{in}^{-1}$.  Thus ${\rm{GL}}(n,\Z)$ contains a direct
product of $(n-1)$ copies of the infinite dihedral group, namely
$D_{(i)} = \langle \tau_i,\, E_{in}\rangle$ with $i=1,\dots,n-1$.

Consider an action of ${\rm{GL}}(n,\Z)$ by  isometries on a complete
\cat space $X$ of dimension $\le n-2$. 
According to Proposition \ref{p:prodLemma}, one of the $D_{(i)}$
must have a fixed point in $X$, so by Lemma \ref{Eenough}, ${\rm{SL}}(n,\Z)$ has a fixed point,
and by Lemma \ref{l:fi},   ${\rm{GL}}(n,\Z)$ does too.

If $n$ is odd, we can repeat this argument with $\<-I_n\tau_i, E_{in}\>\le {\rm{SL}}(n,\Z)$  in place of $D_{(i)}$
to see that $\fd({\rm{SL}}(n,\Z)) \ge n-2$. If $n$ is even, then we drop $D_{(1)}$ and replace
$D_{(i)}$ by $\<\tau_1\tau_i, E_{in}\>\le {\rm{SL}}(n,\Z)$  for $i=2,\dots,n$.
\end{proof}
 
 Similar arguments apply to  other lattices in higher-rank groups, cf.~\cite{farb}

\section{Appendix on Theorem \ref{t:helly}}

In this appendix, we prove a Helly-type result that contains Theorem \ref{t:helly} as a special case.
There are
many similar theorems in the literature and it is unlikely that anything here will be unfamiliar to
experts. But one has to be careful about the exact hypotheses 
if one wants to apply such theorems to spaces that potentially have  wild local structure, as we do
in this article; this is discussed in  \cite{BKV}.

Let $X$ be a metric space and let $\C$ be a 
finite collection $C_0,\dots,C_n$ of
closed, non-empty subsets of $X$. We shall say that  a non-empty indexing subset
$J\subseteq \n = \{0,\dots,n\}$ is  {\em{admissible}}    if
$\cap_{j\in J} C_{j}\neq\emptyset$. 
We write 
$\NC$ to denote the {\em{nerve}}
of $\C$, i.e.~the simplicial complex
with vertex set $\n$ that has an
$r$-simplex $\sigma_J$ with vertex set $J$ for each 
admissible $(r+1)$-element
subset $J\subseteq\n$. (It is sometimes convenient to regard $\NC$ as a 
subcomplex of the standard $n$-simplex $\Delta_n$.)

The set of admissible
subsets, which we denote by $\Sigma(\C)$, is partially ordered by inclusion
and the geometric realization of this poset is the first
barycentric subdivision of $\NC$, denoted $\NC'$.  

For admissible subsets $J\subseteq \n$ we write $C^J=\bigcap_{j\in J}
C_j$ and $C_J=\bigcup_{j\in J} C_j$.

We say that $\C$ is 
{\em{sufficiently connected}}\footnote{One can manufacture less restrictive but more technical definitions that
suffice for Proposition \ref{inverse}; this choice is a compromise
between technicality and utility.} if for each admissible set $I\subseteq\n$ the intersection $C^I$ is connected if $I$
is maximal and $h(I)$-connected otherwise,   where  
$h(I) = \max\{ |J|-|I| \colon I\subset J\in\Sigma(\C)\} -1$.   

\begin{lemma}\label{l:basic} If $\C$ is sufficiently connected, then 
\begin{enumerate}
\item every sub-collection $\C'\subset \C$ is sufficiently connected, and
\item for every $C_0\in\C$, the collection $\{C\cap C_0\mid C\in\C\}\ssm\{\emptyset\}$ is sufficiently connected.
\end{enumerate}
\end{lemma}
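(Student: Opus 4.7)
The plan is to verify the defining conditions of sufficient connectedness for $\C'$ and for $\C_0$ by tracking admissible subsets and the function $h$ under each operation.

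For (1), observe that $\Sigma(\C') \subseteq \Sigma(\C)$ and that, for $I \in \Sigma(\C')$, the set $C^I$ is the same whether computed in $\C'$ or in $\C$. The maximum defining $h_{\C'}(I)$ ranges over a subset of the maximum defining $h_{\C}(I)$, so $h_{\C'}(I) \le h_{\C}(I)$ whenever $I$ is not maximal in $\Sigma(\C')$; hence $(h_{\C}(I))$-connectedness of $C^I$ yields the required $(h_{\C'}(I))$-connectedness. The only subtlety arises when $I$ is maximal in $\Sigma(\C')$ but not in $\Sigma(\C)$: here one needs $C^I$ to be connected, and this follows because in that situation $h_{\C}(I) \ge 0$, so the hypothesis already forces $C^I$ to be $0$-connected.

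For (2), I will index $\C_0$ by $\n_0 := \{i \in \n \ssm \{0\} : C_i \cap C_0 \ne \emptyset\}$, with the element at index $i$ being $C_i \cap C_0$. The crucial tool is the inclusion-preserving map
\[
\phi : \Sigma(\C_0) \longrightarrow \Sigma(\C), \qquad \phi(I) = I \cup \{0\},
\]
which is a bijection onto $\{K \in \Sigma(\C) : 0 \in K,\ |K| \ge 2\}$ and satisfies $C^I_{\C_0} = C^{\phi(I)}_{\C}$. Since $0 \notin I$ while $0 \in \phi(I)$, adjoining $\{0\}$ preserves cardinality differences: $|J| - |I| = |\phi(J)| - |\phi(I)|$ for every inclusion $I \subseteq J$ in $\Sigma(\C_0)$. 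Moreover, any admissible $L \in \Sigma(\C)$ strictly containing $\phi(I)$ automatically contains $0$ and therefore lies in the image of $\phi$. Combining these observations, $I$ is maximal in $\Sigma(\C_0)$ if and only if $\phi(I)$ is maximal in $\Sigma(\C)$, and otherwise $h_{\C_0}(I) = h_{\C}(\phi(I))$. The required connectedness of $C^I_{\C_0}$ is therefore exactly what the hypothesis supplies for $C^{\phi(I)}_{\C}$.

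The main obstacle is conceptual rather than computational: one must fix the correct indexing for $\C_0$ — suppressing the redundant copy of $C_0$ that one would formally obtain by taking $C = C_0$ in the defining expression — so that the bijection $\phi$ has the clean properties above. With that convention in place, both parts of the lemma reduce to routine poset bookkeeping.
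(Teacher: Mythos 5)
Your argument is correct, and there is nothing in the paper to compare it against: Lemma \ref{l:basic} is stated without proof, as a routine consequence of the definition. Part (1) is handled exactly as one should -- the only non-trivial case is an $I$ that is maximal in $\Sigma(\C')$ but not in $\Sigma(\C)$, and your observation that $h_{\C}(I)\ge 0$ there, so that $0$-connectedness supplies connectedness, closes it. For part (2), your bijection $\phi(I)=I\cup\{0\}$ onto $\{K\in\Sigma(\C): 0\in K,\ |K|\ge 2\}$, preserving intersections, inclusions, cardinality differences and maximality, is the right mechanism, and the verification that $h_{\C_0}(I)=h_{\C}(\phi(I))$ is complete. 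One point deserves more emphasis than you give it: dropping the member $C_0=C_0\cap C_0$ from the new collection is not merely a convenient normalisation of a ``redundant'' element. If $C_0$ is retained as a vertex, then every admissible $J$ not containing that vertex can be enlarged by it without changing the intersection, so $h$ at each other admissible $I$ increases by one, and the statement can genuinely fail for non-convex closed sets (e.g.\ two path-connected sets whose intersection is connected but not path-connected). So your indexing convention is forced, not optional; it is the reading under which the lemma is true, and in the paper's intended application to convex sets the distinction is invisible because all non-empty intersections are contractible.
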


\begin{prop}\label{inverse} 
If $\C$ is
sufficiently connected then there exists a compact set $U\subseteq
X$ and continuous maps $\phi: \NC\to U$ and $\psi: U\to \NC$
such that $\psi\circ\phi:\NC\to \NC$ is homotopic to the
identity.
\end{prop}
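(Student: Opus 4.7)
My plan is to build $\phi$ on the first barycentric subdivision $\NC'$ of $\NC$ (which is homeomorphic to $\NC$) via an obstruction-theoretic extension, to build $\psi$ using a partition of unity on $U$ subordinate to small neighborhoods of the $C_j$, and then to argue that $\psi\circ\phi$ is homotopic to the identity by a straight-line homotopy in $\NC\subseteq\Delta_n$.

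For $\phi$, a $k$-simplex of $\NC'$ corresponds to a strictly increasing chain $J_0\subsetneq\cdots\subsetneq J_k$ in $\Sigma(\C)$. I construct $\phi\colon\NC'\to X$ inductively on skeleta so that each simplex $[v_{J_0},\ldots,v_{J_k}]$ is mapped into $C^{J_0}$. For a vertex $v_J$, pick any $\phi(v_J)\in C^J$, which exists since $J$ is admissible. For a $k$-simplex $\tau$ with $k\ge 1$, the map is already defined on $\partial\tau\cong S^{k-1}$ and lands in $C^{J_0}$ by induction, using $C^{J_i}\subseteq C^{J_0}$ for $i\ge 1$. To extend to $\tau$, I need $C^{J_0}$ to be $(k-1)$-connected; since $J_0$ is not maximal in the chain, $|J_k|-|J_0|\ge k$, so $h(J_0)\ge k-1$, and the hypothesis of sufficient connectedness delivers exactly the connectivity required. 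Setting $U=\phi(\NC')$ gives a compact subset of $X$.

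For $\psi$, I would set $\lambda_j^\delta(x)=\max(0,\,1-d(x,C_j)/\delta)$ for each $j\in\n$ and a small $\delta>0$. A compactness argument shows that for $\delta$ small enough and every $x\in U$, the set $J_\delta(x)=\{j\colon\lambda_j^\delta(x)>0\}$ is admissible: if not, some non-admissible $I\subseteq\n$ would admit a point $x^*\in U$ with $d(x^*,C_j)=0$ for all $j\in I$, forcing $x^*\in\bigcap_{j\in I}C_j=\emptyset$. Since there are only finitely many subsets $I$, one takes $\delta$ less than the positive infimum over all non-admissible $I$. Then
\[
\psi(x)=\frac{\sum_j\lambda_j^\delta(x)\,v_j}{\sum_j\lambda_j^\delta(x)}
\]
defines a continuous map $U\to\NC$; the denominator is positive because, for $x=\phi(y)$ with $y\in[v_{J_0},\ldots,v_{J_k}]$, we have $\phi(y)\in C^{J_0}$ and hence $\lambda_j^\delta(x)=1$ for $j\in J_0$.

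For the homotopy $\psi\circ\phi\simeq\mathrm{id}_{\NC}$, my plan is to show that for every $y\in\NC'$ the two points $y$ and $\psi(\phi(y))$ lie in a common simplex of $\NC$; the straight-line homotopy inside $\NC\subseteq\Delta_n$ joining them is then well-defined and gives the required homotopy. Here $y$ has support contained in $J_k$ (where $y\in[v_{J_0},\ldots,v_{J_k}]$), and $\psi(\phi(y))$ has support $J_\delta(\phi(y))\supseteq J_0$, so the key claim is that $J_k\cup J_\delta(\phi(y))$ is admissible. I expect this to be the main obstacle: although $\phi(y)\in C^{J_0}$ and $J_0\subseteq J_k$, one does not automatically have $\phi(y)\in C^{J_k}$, so ensuring the common simplex will require a further shrinking of $\delta$ tailored to the already-constructed $\phi$, together with a second compactness argument bounding the distance from each point of $\phi(\tau)$ to each $C_j$ with $j\notin J_k$. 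This delicate step is where both the sufficient-connectedness hypothesis and the finiteness of $\C$ must be combined.
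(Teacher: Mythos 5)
Your construction of $\phi$ coincides with the paper's Lemma \ref{phi} and is correct, and your partition of unity $\lambda_j^\delta(x)=\max(0,1-d(x,C_j)/\delta)$, with $\delta$ chosen by compactness so that $J_\delta(x)$ is admissible for all $x\in U$, is a legitimate (and arguably cleaner) alternative to the paper's construction of $\psi$, which instead covers $U$ by balls $B(x_t)$ of radius $\e(x_t)=\min_{i\notin J(x_t)}d(x_t,C_i)$ and composes a partition of unity for that cover with the poset map $\tau\mapsto\bigcap_{t\in\tau}J(x_t)$. The genuine gap is exactly where you suspect it, in the homotopy step, but it is not of the kind you propose to repair: no further shrinking of $\delta$ can make $J_k\cup J_\delta(\phi(y))$ admissible, because the obstruction is already present at ``$\delta=0$''. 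For $y$ in the simplex $[b(J_0),\dots,b(J_k)]$ of $\NC'$, the point $\phi(y)$ is constrained only to lie in $C^{J_0}$; nothing in the connectivity-based construction of $\phi$ prevents $\phi(y)$ from lying \emph{in} (not merely near) some $C_j$ with $j\notin J_k$ and $J_k\cup\{j\}$ inadmissible. Already for three sets with nerve the path $1$--$0$--$2$ (e.g.\ $C_0=[0,3]$, $C_1=[3,4]$, $C_2=[-1,1]$ in $\R$), the vertex $b(\{0\})$ may be sent to a point of $C_0\cap C_2$, and then for nearby $y$ on the edge towards $b(\{0,1\})$ one has $2\in J_\delta(\phi(y))$ for every $\delta>0$ while $\mathrm{supp}(y)=\{0,1\}$ and $\{0,1,2\}$ is inadmissible; the straight segment from $y$ to $\psi(\phi(y))$ then leaves $\NC$.

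What is actually available, and what the paper uses, is the weaker statement that the supports merely \emph{meet}: since $\phi(\sigma_J)\subseteq C_J=\bigcup_{j\in J}C_j$, for each $p\in\sigma_J$ some $j\in J$ has $\phi(p)\in C_j$, hence $\lambda_j^\delta(\phi(p))=1$ and $j\in\mathrm{supp}(\psi(\phi(p)))$; that is, each simplex $\sigma_J$ is carried by $\psi\circ\phi$ into the union of the open stars of its own vertices. Combined with the fact (which your choice of $\delta$ does provide) that $\mathrm{supp}(\psi(x))$ is always contained in a single admissible set, this is enough to conclude $\psi\circ\phi\simeq\mathrm{id}_{\NC}$ --- but not by one global straight-line homotopy; one must build the homotopy simplex by simplex, as in the standard simplicial-approximation argument the paper invokes via Spanier (3.3.11). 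So the architecture of your proof is right and your $\phi$ and $\psi$ are fine, but the key claim driving the homotopy must be weakened from ``$y$ and $\psi(\phi(y))$ span a common simplex'' to ``their carriers share a vertex'', and the homotopy constructed accordingly.
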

  
We first construct $\phi:\NC\to X$.

\begin{lemma}\label{phi} If $\C$ is sufficiently
connected then there exists
a continuous map $\phi: \NC\to X$ such that $\phi(\sigma_J)
\subseteq C_J$
for all $J\in\Sigma(\C)$.
\end{lemma}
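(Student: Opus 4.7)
The plan is to construct $\phi$ cell-by-cell on the barycentric subdivision $\NC'$, which has the same underlying topological space as $\NC$. Recall that the vertices of $\NC'$ are indexed by the admissible sets $J\in\Sigma(\C)$, and its simplices are chains $[J_0,\ldots,J_r]$ with $J_0\subsetneq J_1\subsetneq\cdots\subsetneq J_r$. First I would choose, for each admissible $J$, a point $x_J\in C^J$ (non-empty by admissibility), and set $\phi(J):=x_J$. I then extend $\phi$ inductively over the skeleta of $\NC'$, maintaining the stronger invariant that the image of each closed simplex $[J_0,\ldots,J_r]$ lies in $C^{J_0}$, the intersection indexed by the smallest term of the chain. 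This is stronger than the conclusion sought: once established, it follows that for any $\sigma_J\in\NC$ the simplices of $\NC'$ inside $\sigma_J$ have the form $[J_0,\ldots,J_r]$ with $J_0\subseteq J$, so their image lies in $C^{J_0}\subseteq C_{J_0}\subseteq C_J$, as required.

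For the inductive step, suppose $\phi$ has been built on the $(k-1)$-skeleton and let $\sigma=[J_0,\ldots,J_k]$ be a $k$-simplex. Its boundary $\partial\sigma\cong \S^{k-1}$ decomposes into the faces obtained by deleting one $J_i$. For $i\ge 1$ the resulting chain still begins with $J_0$, so by induction its image lies in $C^{J_0}$; for $i=0$ the chain begins with $J_1$, and by induction its image lies in $C^{J_1}\subseteq C^{J_0}$. Hence $\phi|_{\partial\sigma}$ takes values in $C^{J_0}$. To extend across $\sigma$ it suffices that $C^{J_0}$ be $(k-1)$-connected. But $J_0\subsetneq J_k$ forces $J_0$ to be non-maximal in $\Sigma(\C)$, and from the chain one reads off
\[
h(J_0)\;\ge\;|J_k|-|J_0|-1\;\ge\;k-1.
\]
So sufficient connectedness provides exactly the required $(k-1)$-connectedness of $C^{J_0}$, and $\phi|_{\partial\sigma}$ extends continuously to a map $\sigma\to C^{J_0}$; this preserves the invariant and completes the inductive step. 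Continuity of the assembled $\phi$ follows as usual from agreement on shared faces.

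The main obstacle is really just combinatorial bookkeeping: choosing the right invariant $\phi([J_0,\ldots,J_r])\subseteq C^{J_0}$ so that the ``minimum index'' of a face dictates a target intersection that contains both $C^{J_0}$ and all $C^{J_i}$ for $i>0$, and then verifying that the inequality $h(J_0)\ge k-1$ is automatic for a chain of length $k+1$. Once these are in place, extending across each new simplex is a textbook application of $(k-1)$-connectedness, and no further properties of $X$ are needed beyond the connectivity hypothesis on $\C$.
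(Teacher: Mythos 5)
Your proof is correct and follows essentially the same route as the paper: an inductive construction over the skeleta of the barycentric subdivision $\NC'$, maintaining the invariant $\phi([J_0,\dots,J_r])\subseteq C^{J_0}$ and using the bound $h(J_0)\ge |J_r|-|J_0|-1\ge r-1$ from sufficient connectedness to extend over each new simplex. No substantive differences from the paper's argument.
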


\begin{proof}  A typical $m$-simplex $S$ of  the barycentric subdivision 
$\NC'$ has vertices
$b(J_0),\dots,b(J_m)$, where $J_0\subset\cdots\subset J_m$
and $b(J_i)$ denotes the barycentre
of $\sigma_{J_i}$. 
We  will construct $\phi$ inductively on
the skeleta of $\NC'$, ensuring that $\phi(S)$ is contained
in   $C^{J_0}$.   (If $J_i\subseteq J$ then $C^{J_i}\subseteq C_J$, so
$\phi(S)\subseteq C^{J_0}$ implies that $\phi(\sigma_J)\subseteq C_J$, as required.)

In the base step of the induction we can choose $\phi(b_{J_i})\in
C^{J_i}$ arbitrarily. Assume, then, that $\phi$ has been
defined on $\partial S$  so that for each face $V<S$ we have $\phi(V)\subseteq C^{J_v}$
where $J_v$ is the smallest vertex of $V$.  Then, $J_0\subseteq J_v$ implies 
 $\phi(\partial S)\subseteq C^{J_0}$.  As
$\partial S$ is a topological sphere of
dimension at most $|J_m|-|J_0|\le h(J_0)$ and
$C^{J_0}$ is assumed to be $h(J_0)$-connected,
$\phi$ can be extended to a continuous map
on $S$ with image in $C^{J_0}$ and the induction is complete.
\end{proof}

Let $\phi$ be as above and let $U\subseteq X$ be the image of 
$\phi$.
We wish to construct a map $\psi:U\to \NC$ such that
$\psi\circ\phi\simeq {\hbox{id}}_{\NC}$.
If the intersection of the entire collection $\C$
is non-empty, then $\NC$ is an $n$-simplex and
we can define $\psi$ to be any choice of constant map.
Thus we may assume that the $C_i$ do not have a point
of common intersection. It follows that for each
$x\in X$ the set $J(x):=\{j\mid x\in C_j\}$ is a
proper subset of $\n$.
Let $\e(x)=\min_{i\notin J(x)}
d(x,C_i)$. Let $B(x)\subseteq X$ be the open ball
of radius $\e(x)$ about $x$ and
note that if $y\in B(x)$ then $J(y)\subseteq J(x)$.

By the compactness of $U= \im(\phi)$,
there is a finite set $T$ such that the union 
of the balls $B(x_t),\, t\in T,$ 
contains $U$,  with $x_t\in U$. We consider the nerve $\NT$
of the collection $\mathcal T$
of sets $\{B(x_t)\cap U\colon t\in T\}$.  As before, the barycentric subdivision $\NT'$ is 
naturally the
geometric realization of the poset of admissible subsets
of $T$ ordered by inclusion, which we denote $\Sigma(\mathcal T)$.
However, it is now more convenient to reverse the face relation
and regard the space $\NT'$ as the geometric realization of the poset
$\Sigma^{op}(\mathcal T)$. 

For $\tau\in \Sigma(T)$ we define $J[\tau] := \bigcap_{t\in\tau} J(x_t)$.

\begin{lemma} 
$\tau\mapsto J[\tau]$ defines a morphism of 
posets
$$
\Sigma^{op}(\T)\to\Sigma(\C)
$$
and hence induces a continuous map on their geometric realizations
$$
\Psi: \NT'\to\NC'.
$$
\end{lemma}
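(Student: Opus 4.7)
The plan is to verify two things: (a) that $\tau \mapsto J[\tau]$ takes values in the admissible set $\Sigma(\C)$, and (b) that it is order-reversing from $\Sigma(\T)$ to $\Sigma(\C)$, hence order-preserving from $\Sigma^{op}(\T)$ to $\Sigma(\C)$. Once these are in hand, the continuous map $\Psi: \NT'\to\NC'$ is obtained by the standard functoriality of order complexes: every morphism between finite posets induces a simplicial (hence continuous) map between the geometric realizations of those posets, and $\NT'$ and $\NC'$ are by construction the order complexes of $\Sigma(\T)$ and $\Sigma(\C)$ respectively.

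For (a), fix an admissible $\tau\in\Sigma(\T)$ and choose a witness $y\in\bigcap_{t\in\tau}B(x_t)\cap U$. I first verify that $J[\tau]$ is nonempty. Since $y\in U$, we can write $y=\phi(z)$ for some $z\in\NC$; let $J_0$ be the smallest vertex of the simplex of $\NC'$ whose interior contains $z$. The construction in Lemma \ref{phi} guarantees $\phi(z)\in C^{J_0}$, i.e.\ $J_0\subseteq J(y)$, so $J(y)$ is nonempty. On the other hand, the observation recorded just before the statement tells us that $y\in B(x_t)$ forces $J(y)\subseteq J(x_t)$ for every $t\in\tau$. Intersecting over $t\in\tau$ gives
$\emptyset\neq J_0\subseteq J(y)\subseteq \bigcap_{t\in\tau}J(x_t)=J[\tau].$
To see that $J[\tau]$ is admissible, pick any $t\in\tau$; since $J[\tau]\subseteq J(x_t)$ by definition, we have $x_t\in C_j$ for every $j\in J[\tau]$, hence $x_t\in\bigcap_{j\in J[\tau]}C_j$, so this intersection is nonempty.

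For (b), if $\tau\subseteq\tau'$ in $\Sigma(\T)$, then
$J[\tau']=\bigcap_{t\in\tau'}J(x_t)\subseteq\bigcap_{t\in\tau}J(x_t)=J[\tau],$
so $\tau\mapsto J[\tau]$ reverses inclusion in $\Sigma(\T)$, equivalently it is order-preserving as a map $\Sigma^{op}(\T)\to\Sigma(\C)$. This completes the verification that it is a poset morphism, and $\Psi$ is the induced simplicial map on order complexes.

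The whole argument is bookkeeping; the only step that needs real care is the nonemptiness of $J[\tau]$ in (a), which is precisely where both features of the choice of $y$ are used in tandem: membership in every $B(x_t)$ shrinks $J(y)$ down into $J[\tau]$, while membership in the image $U$ of $\phi$ is what prevents $J(y)$ from collapsing to the empty set, via the containment $\phi(\sigma_J)\subseteq C_J$ supplied by Lemma \ref{phi}.
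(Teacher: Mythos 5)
Your proof is correct and follows essentially the same route as the paper: show $J[\tau]$ is a non-empty subset of the admissible set $J(x_t)$ using a witness $y$ in the intersection of the balls, note that reversing inclusions is immediate, and invoke functoriality of order complexes. You are somewhat more explicit than the paper in justifying why $J(y)\neq\emptyset$ (via $\phi(\sigma_J)\subseteq C_J$ from Lemma \ref{phi}), which is a point the paper leaves implicit.
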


\begin{proof}
If $y\in U$ lies in the intersection of the balls $B(x_t),\, t\in\tau$, then $J(y)$ is a non-empty
subset of $J(x_t)$ for all $t\in\tau$.
Thus
$J[\tau]$ is a non-empty
subset of an admissible set, hence is admissible.
It is clear that if $\tau_1\supseteq\tau_2$ then
$J[\tau_1]\subseteq J[\tau_2]$. 
\end{proof}

The next part of the argument is modelled on Lemma I.7A.15 in \cite{BH}.
The map $\psi: U\to \NC$ that we seek is obtained by composing
$\Psi$ with a map $f:U\to \NT$ constructed using
a partition of unity subordinate to the covering $\T$.
Thus for each $t\in T$,
we define $f_t:  U\to [0,\infty)$ by 
$f_t(y)=\e(x_t) - d(y,x_t)$
if $y\in B(x_t)$ and $f(y) = 0$
otherwise. We then define a continuous map 
$f: U\to \NT$ by sending $y$ to the point whose
$t$-th barycentric coordinate is 
$$
\frac{f_t(y)}{\sum_{s\in T}f_s(y)}.
$$

The following lemma completes the proof of Proposition \ref{inverse}.

\begin{lemma} $\Psi\circ f\circ \phi$ is homotopic to
the identity of $\NC$.
\end{lemma}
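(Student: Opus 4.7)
The plan is to construct the homotopy between $\Psi \circ f \circ \phi$ and the identity of $\NC$ by exhibiting, for each $z \in \NC$, a common closed simplex of $\NC'$ that contains both $z$ and $\Psi(f(\phi(z)))$, and then interpolating along straight lines inside that simplex. The key combinatorial observation is that the two corresponding open simplices of $\NC'$ share a canonical vertex, which allows one to pass from one to the other through a two-stage straight-line contraction.

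Unpacking the input: suppose $z$ lies in the open barycentric simplex $[b(J_0),\ldots,b(J_m)]$ of $\NC'$, with chain $J_0 \subsetneq \cdots \subsetneq J_m$ in $\Sigma(\C)$. By Lemma~\ref{phi}, $\phi(z) \in C^{J_0}$, so $J_0 \subseteq J(\phi(z))$. Write $\tau = \{t \in T : \phi(z) \in B(x_t)\}$ for the support of $f(\phi(z))$. By the defining property of the radius of $B(x_t)$ we have $J(\phi(z)) \subseteq J(x_t)$ for every $t \in \tau$, hence $J_0 \subseteq J[\tau']$ for every non-empty $\tau' \subseteq \tau$. Now $\Psi(f(\phi(z)))$ lies in the simplex of $\NC'$ whose vertices are the $b(J[\tau'])$ for $\tau'$ running through the chain of supports determined by the distinct values of the barycentric coordinates of $f(\phi(z))$ in $\NT'$. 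Each such vertex satisfies $J[\tau'] \supseteq J_0$, so adjoining $b(J_0)$ to this vertex set produces a chain in $\Sigma(\C)$, i.e., a simplex of $\NC'$; and $b(J_0)$ is already a vertex of the simplex of $\NC'$ containing $z$.

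The homotopy would then be built in two stages passing through $b(J_0)$: first contract $z$ linearly to $b(J_0)$ within its barycentric simplex, and then expand linearly from $b(J_0)$ to $\Psi(f(\phi(z)))$ inside the simplex of $\NC'$ spanned by $\{b(J_0)\}$ together with the vertices of the simplex containing $\Psi(f(\phi(z)))$. The hard part will be checking continuity as $z$ varies, because the choice $J_0 = J_0(z)$ jumps whenever $z$ crosses a face of a barycentric simplex. I would handle this in the standard way by smoothing with the barycentric coordinates of $z$: instead of contracting to the single vertex $b(J_0)$, use a weighted combination of all the $b(J_i)$ in the chain containing $z$, with weights given by the barycentric coordinates of $z$ itself. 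Vanishing barycentric coordinates guarantee that the two recipes on either side of a wall agree on the common face, producing a continuous homotopy $H : \NC \times [0,1] \to \NC$ with $H(\cdot,0)=\mathrm{id}_{\NC}$ and $H(\cdot,1)=\Psi\circ f \circ \phi$; this completes the proof of Proposition~\ref{inverse} and hence of Theorem~\ref{t:helly}.
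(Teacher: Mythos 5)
Your combinatorial core is correct and coincides with the paper's key computation: from $\phi(z)\in C^{J_0}$ you get $J_0\subseteq J(\phi(z))\subseteq J(x_t)$ for every $t$ in the support $\tau$ of $f(\phi(z))$, hence $J_0\subseteq J[\tau']$ for every $\tau'$ in the relevant chain, so $z$ and $\Psi(f(\phi(z)))$ both lie in the closed star of $b(J_0)$ in $\NC'$; equivalently, both points have strictly positive barycentric coordinates in $\NC$ at every $j\in J_0$. The paper records the same fact in the form: each simplex $\sigma_J$ of $\NC$ is mapped by $\Psi\circ f\circ\phi$ into the union of the open stars of its vertices.

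The gap is in the assembly of the homotopy. The two-stage track through $b(J_0(z))$ is genuinely discontinuous, and the smoothing you propose does not repair it: a weighted combination of all the $b(J_i)$ in the chain carrying $z$, with weights the barycentric coordinates of $z$, is just $z$ itself, which makes the first stage constant and leaves the second stage starting at a point that need not lie in the simplex spanned by $b(J_0)$ and the carrier of $\Psi(f(\phi(z)))$. If instead you mean the linear contraction of the $\NC'$-coordinates of $z$ toward $b(J_0)$, the recipes on the two sides of a wall do not agree on the common face: as the coordinate at $b(J_0)$ tends to $0$ your track still passes through $b(J_0)$ at time $1/2$, whereas on the face the rule substitutes $b(J_1)$. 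The second stage has the same defect, since the carrier of $f(\phi(z))$ in $\NT'$ also jumps (when $\phi(z)$ crosses $\partial B(x_t)$, or when barycentric coordinates of $f(\phi(z))$ collide). There is no continuous choice of a single waypoint; what your computation actually provides is, for each simplex $\sigma_J$ of $\NC$, a single contractible set containing both $\sigma_J$ and its image, namely the union of the open stars of the vertices of $\sigma_J$ (which deformation retracts onto $\sigma_J$). The paper therefore builds the homotopy not by a formula but by induction over the skeleta of $\NC$, extending over each $\sigma_J\times[0,1]$ into that union of stars --- the standard straight-line/simplicial-approximation argument cited from Spanier (3.3.11). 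Your key estimate feeds directly into that argument, so the proof is readily completed, but the explicit two-stage homotopy as written would fail.
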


\begin{proof} It is enough to prove that each simplex $\sigma_J$
of $\NC$ is mapped into the union of the open stars of its vertices $j\in J$.
One can then construct a homotopy to the identity by proceeding
one simplex at a time using the obvious ``straight-line homotopies",   see \cite{spanier} (3.3.11).

If $p\in\sigma_J$ then by construction $\phi(p)\in C_j$ for some $j\in J$, 
so $j\in J(\phi(p))$.  
The vertex set $\tau$ of the open simplex in $\NT$ containing $f(\phi(p))$ 
consists of those vertices $t$ for which  $f_t(\phi(p))>0$;
equivalently
$\phi(p)\in
B(x_t)$, which implies $J(\phi(p))\subseteq J(x_t)$. 
Thus $j\in J[\tau]$,  which means that  $\Psi\circ f\circ\phi(p)$ lies
in a simplex of $\NC'$ that  is in the  closed star neighbourhood of $\{j\}$,
which is in the open star of $j$ in $\NC$.
\end{proof}

We recall the definition of dimension that is most convenient to our ends.

\begin{definition} A topological space $X$ has dimension
at most $d$, written $\dim(X)\le d$, if for every closed
subspace $K\subseteq X$, every continuous map
$f:K\to\S^r$ to a sphere of dimension $r\ge d$ extends to
a continuous map $X\to\S^r$.
\end{definition}

\begin{definition} A topological space $X$ is {\em $d$-coconnected} if every
continuous map from $X$ to a sphere of dimension $r\ge d$ is homotopic to a constant map.
\end{definition}
 
 For example, the $n$-sphere is $(n+1)$-coconnected but not $d$-connected for $d\le n$.
 Contractible spaces are $d$-coconnected for all $d$.
 Theorem \ref{t:helly} is a special case of the following version of Helly's theorem.

\begin{theorem} \label{nerve} Let $d\ge 0$ be an integer.
If $X$ is a $d$-coconnected metric space of 
topological dimension $\le d$ and  $\C$ is a sufficiently
connected collection of closed subsets of $X$, with
nerve $\NC$, then every continuous map $\NC\to\S^r$ to a
sphere of dimension $r\ge d$ is homotopic to a constant map.
\end{theorem}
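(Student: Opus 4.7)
The plan is to use Proposition \ref{inverse} to compare the nerve $\NC$ with a closed subspace of $X$, and then import the $d$-coconnectedness of $X$ through the extension property built into the definition of topological dimension. Concretely, given any continuous $f:\NC\to\S^r$ with $r\ge d$, I would consider the auxiliary map $f\circ\psi:U\to\S^r$, where $\phi:\NC\to U$ and $\psi:U\to\NC$ are the maps supplied by Proposition \ref{inverse} satisfying $\psi\circ\phi\simeq\mathrm{id}_{\NC}$.

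Next, I would observe that $U=\phi(\NC)$ is compact, being the continuous image of a compact simplicial complex, so in particular $U$ is closed in the metric (hence Hausdorff) space $X$. The hypothesis $\dim(X)\le d$ therefore guarantees that the continuous map $f\circ\psi:U\to\S^r$ extends to a continuous map $F:X\to\S^r$. The $d$-coconnectedness of $X$ now implies that $F$ is homotopic to a constant map, and restricting such a homotopy to $U$ yields $f\circ\psi\simeq\mathrm{const}$. Composing with $\phi$ and using the homotopy $\psi\circ\phi\simeq\mathrm{id}_{\NC}$ from Proposition \ref{inverse} gives
$$f \;\simeq\; f\circ\psi\circ\phi \;\simeq\; \mathrm{const},$$
which is the desired conclusion.

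The main work of the theorem is really absorbed into Proposition \ref{inverse}, which has already been proved in the excerpt via an inductive construction of $\phi$ over the barycentric subdivision (using the connectivity hypothesis at each admissible intersection) and a partition-of-unity construction of $\psi$ through the auxiliary nerve $\NT$. Once that proposition is in hand, the remaining argument is essentially formal: the dimension bound allows extension across $X$, and coconnectedness then collapses the homotopy class globally, which suffices because $\psi$ is a one-sided homotopy inverse for $\phi$. The only points to verify carefully are that $U$ is closed in $X$ (immediate from compactness and the Hausdorff property) and that the homotopy $\psi\circ\phi\simeq\mathrm{id}_{\NC}$ pushes forward under post-composition with $f$ to the homotopy $f\circ\psi\circ\phi\simeq f$, both of which are routine.
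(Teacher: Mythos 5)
Your argument is correct and coincides with the paper's own proof of Theorem \ref{nerve}: both reduce the statement to Proposition \ref{inverse}, extend $f\circ\psi$ from the compact (hence closed) set $U$ to all of $X$ using $\dim(X)\le d$, null-homotope the extension by $d$-coconnectedness, and conclude via $f\simeq f\circ\psi\circ\phi$. No issues.
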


\begin{proof} Suppose $r\ge d$ and consider a 
continuous map $g:\NC\to\S^r$. Proposition \ref{inverse} provides
a compact  $U\subseteq X$ and continuous maps $\phi:\NC\to U$
and $\psi:U\to\NC$ such that $\psi\circ\phi\simeq {\hbox{id}}_{\NC}$.
Since $X$ is $d$-dimensional, $g\circ\psi$ has a continuous
extension to $X$, and since $X$ is $d$-coconnected this extension
is homotopic to a constant map. Thus $g\circ\psi$ and $g\circ\psi\circ\phi
\simeq g$ are homotopic to constant maps.
\end{proof}

\end{document}